\newtheorem{theorem}{\sc Theorem}[section]
\newtheorem{proposition}[theorem]{\sc Proposition}
\newtheorem{notation}[theorem]{\sc Notation}
\newtheorem{lemma}[theorem]{\sc Lemma}
\newtheorem{corollary}[theorem]{\sc Corollary}
\theoremstyle{definition}
\newtheorem{definition}[theorem]{\sc Definition}
\newtheorem{definitions}[theorem]{\sc Definitions}
\newtheorem{example}[theorem]{\sc Example}
\theoremstyle{remark}
\newtheorem{remark}[theorem]{\sc Remark}
\newcounter{maint}
\def\yd{_H^H\mathcal{YD}}
\def\c{\mathfrak{c}}
\begin{document}
\title{Quantum Lines for Dual Quasi-Bialgebras }

\author{Alessandro Ardizzoni}
\address{\parbox[b]{\linewidth}{University of Turin, Department of Mathematics ``G. Peano'', via
Carlo Alberto 10, I-10123 Torino, Italy}}
\email{alessandro.ardizzoni@unito.it}
\urladdr{www.unito.it/persone/alessandro.ardizzoni}

\author{Margaret Beattie}
\address{Department of Mathematics and Computer Science, Mount Allison
University, Sackville, NB, Canada, \indent E4L 1E6}
\email{mbeattie@mta.ca}
\urladdr{http://www.mta.ca/~mbeattie/}

\author{Claudia Menini}
\address{\parbox[b]{\linewidth}{Department of Mathematics and Computer Science, University of Ferrara, Via Machiavelli
35, Ferrara, I-44121, Italy}}
\email{men@unife.it}
\urladdr{web.unife.it/utenti/claudia.menini}

\subjclass[2000]{Primary 16W30;  Secondary 16S40}

\thanks{This paper was written while the first and the third authors were members of GNSAGA.
The first author was partially supported by the research grant ``Progetti di
Eccellenza 2011/2012'' from the ``Fondazione Cassa di Risparmio di Padova e
Rovigo''. The second author was supported by an NSERC Discovery Grant.   Her stay, as a visiting
professor at University of Ferrara in 2011, was supported by INdAM}

\begin{abstract}
In this paper, the theory to construct quantum lines for general dual quasi-bialgebras is developed followed by some specific examples where the dual quasi-bialgebras are pointed with cyclic group of points.
\end{abstract}

\keywords{ Dual quasi-bialgebras; quantum lines; bosonizations}

\maketitle
\tableofcontents

\section{Introduction}

For $H$ a bialgebra over a field $\Bbbk$ and $R$ a bialgebra in the category of Yetter-Drinfeld modules $\yd$,
 the Radford biproduct or bosonization $R \#H$ is a well-known construction giving a new bialgebra.
 Similarly, if $H$ is a dual quasi-bialgebra and $R$ is a bialgebra in a suitably defined category of Yetter-Drinfeld modules
 over $H$, then by \cite{Ardi-Pava2}, a new dual quasi-bialgebra $R\# H$ can be defined, called the bosonization of $R$ and $H$.

\par Given a bialgebra $H$, however, finding $R$ is nontrivial.  For $H = \Bbbk G$, a group algebra,
 finding $R$ is the key to constructing pointed bialgebras with $G$ as the group of points and finding finite dimensional $R$ is
 crucial to the classification of pointed Hopf algebras of finite dimension.   If $R$ is finite dimensional and is generated
  as an algebra by a one-dimensional vector space in $\yd$, then $R$ is called a quantum line.  For various
 $H$ semisimple of even dimension, not necessarily  group algebras,  the question of the existence of a quantum line
 for $H$ was completely settled in \cite{CDMM-QuantumLines} as well as the question of liftings of the bosonizations.

 \par In this paper, we adapt the methods and language of \cite{CDMM-QuantumLines} to dual quasi-bialgebras. We find necessary and sufficient conditions
 for quantum lines to exist for a given dual quasi-bialgebra $H$ and we compute several examples for the dual quasi-bialgebra
 $(H,\omega)$ where $H$ is
 the group algebra of a cyclic group of any order and $\omega$ is a $3$-cocycle. Finally we give an example of the
 existence of a quantum line for a bosonization $R \# \Bbbk C_n$ where $n$ is an even integer.

\par The duals of our examples will be quasi-bialgebras
 as studied in  the papers of Angiono \cite{angiono}, Gelaki \cite{Gelaki}, and
 Etingof and Gelaki \cite{EGe codim 2}\cite{EGe rad gr}\cite{EGe liftings}.

\par The first section of this paper is used for notation and some preliminary material.
 In the second section, we define quasi-Yetter-Drinfeld data for dual quasi-bialgebras, and then in the next section we construct
 quantum lines.  Section \ref{sec: qYD for bosonizations} discusses conditions to construct a quantum line for a bosonization and
 then the last section gives examples of these constructions.  The examples are based on knowledge of the dual
 quasi-bialgebra $\Bbbk C_n$ with Drinfeld reassociator given by a nontrivial $3$-cocycle.

\section{Preliminaries}

Throughout we work over  $\Bbbk$, an algebraically closed field of characteristic zero. The
tensor product over $\Bbbk $ will be denoted by $\otimes $. Vector spaces,
algebras and coalgebras are all understood to be over $\Bbbk $  and all maps are understood to be
$\Bbbk$-linear. The usual twist map from the tensor space $V \otimes W$ to $W \otimes V$ will
be denoted $\tau$, i.e., $\tau( v \otimes w) = w \otimes v$.  The multiplicative group
of nonzero elements of $\Bbbk$ is denoted by $\Bbbk ^\times$.
\par For any
coalgebra $C$ and algebra $A$,  $\ast $ will denote the convolution product in $
\mathrm{Hom}\left( C,A\right) $.
Composition of functions may be written as concatenation if the emphasis of the symbol $\circ$ is not required.
The tensor product of a map with itself will often be written exponentially, i.e., we will write
$\phi^{\otimes 3}$ to denote $\phi \otimes \phi \otimes \phi$. Similarly $H \otimes H$ is denoted $H^{\otimes 2}$, etc.

\par The group algebra over a group $G$ will be written $\Bbbk G$. The set of grouplike elements
of a coalgebra $C$ will be denoted $G(C)$ and the subcoalgebra generated by $G(C)$ will be denoted $\Bbbk G(C)$.

\par  We  make
the convention that an empty product, for example, a product of the form $\prod\limits_{1\leq j\leq a}$ with $a <1$, is defined to be $1$.

\subsection{Definitions}

 A \emph{coalgebra with
multiplication and unit} is a datum $\left( H,\Delta ,\varepsilon
,m,u\right) $ where $\left( H,\Delta ,\varepsilon \right) $ is a coalgebra, $%
m:H\otimes H\rightarrow H$ is a coalgebra homomorphism called multiplication
and $u:\Bbbk \rightarrow H$ is a coalgebra homomorphism called unit \cite[dual to page 368]{Kassel-Quantum}.
 Denote $u(1_\Bbbk)$ by $1_H$.\\

 For $H$ a coalgebra with multiplication and unit,
 a convolution invertible map $\omega :H^{\otimes 3} \rightarrow  \Bbbk $
is called a $3$-cocycle if and only if
\begin{equation}
\left( \varepsilon \otimes \omega \right) \ast \omega \left( H\otimes
m\otimes H\right) \ast \left( \omega \otimes \varepsilon \right) =\omega
\left( H\otimes H\otimes m\right) \ast \omega \left( m\otimes H\otimes
H\right),  \label{eq:3-cocycle}
\end{equation}
and we say that a cocycle $\omega$ is  unitary or normalized   if for all $h, h^\prime \in H$,
\begin{equation}
\omega (h\otimes 1_{H}\otimes h^{\prime }),  \text{ or equivalently either }\omega(1 \otimes h \otimes h^\prime)
\text{ or } \omega(h \otimes h^\prime \otimes 1), \text{ is }
\varepsilon(h)\varepsilon(h^\prime).
    \label{eq:quasi-unitairitycocycle}
\end{equation}

If $H,L$ are coalgebras with multiplication and unit,  a coalgebra map
$\phi:L \rightarrow H$ is a morphism of coalgebras
with multiplication and unit, if
\begin{equation*}
m_{H }(\phi \otimes \phi )=\phi m_L,\qquad \phi u_L=u_{H
} .
\end{equation*}%
If $\phi:L \rightarrow H$ is a morphism of coalgebras
with multiplication and unit and $\omega$ is a (normalized) $3$-cocycle for $H$, then $\omega   \circ \phi^{ \otimes 3}$  is a  (normalized) $3$-cocycle for $L$. \\

For $H$ a coalgebra with unit, a convolution invertible map
$v: H^{ \otimes 2} \rightarrow \Bbbk$ such that $v(1 \otimes h) = v(h \otimes 1) = \varepsilon(h)$ for all $h \in H$,
i.e., $v$ is unitary or normalized, is called a \textit{gauge transformation}.\\

Note that for $H$ a cocommutative bialgebra    and $v: H \otimes H \rightarrow \Bbbk$ a (normalized) convolution invertible map
 then
the map $\partial^2v: H \otimes H \otimes H \rightarrow \Bbbk$, defined by
\begin{equation*}
\partial^2v: = (\varepsilon \otimes v )\ast v^{-1}(m \otimes H) \ast v(H \otimes m) \ast (v^{-1} \otimes \varepsilon),
\end{equation*}
is a  (normalized)  cocycle  called a  (normalized)  coboundary.
Conversely, if $v$ is convolution invertible and $\partial^2v$ is normalized
 then $v(1 \otimes h) = v(h \otimes 1) = \varepsilon(h) v(1 \otimes 1) $ and so $ v(1 \otimes 1)^{-1}v$ is normalized. (See also Lemma \ref{lm: normalize to a gauge trans}.)
%\textcolor[rgb]{0.00,0.07,1.00}{ For $H$ a  cocommutative bialgebra, the set of $3$-cocycles,   $Z^3(H, \Bbbk)$, is an abelian group.
% The coboundaries
%form a subgroup and the factor group is $H^3(H, \Bbbk)$.  Dear Claudia and Alessandro:  Maybe the previous sentence is not needed??
%}

Now   we  define    the objects of interest in this paper.

\begin{definition} A \emph{dual quasi-bialgebra} is a datum $\left( H,\Delta ,\varepsilon
,m,u,\omega \right) $ where $\left( H,\Delta ,\varepsilon ,m,u\right) $ is a
coalgebra with multiplication and unit and $\omega :H\otimes H\otimes
H\rightarrow \Bbbk $ is a normalized $3$-cocycle such that
\begin{eqnarray}
{}[m\left( H\otimes m\right) ]\ast (u\omega ) &=&(u\omega )\ast \lbrack
m\left( m\otimes H\right) ];  \label{eq:quasi-associativity} \\
m(1_{H}\otimes h) &=&h=m(h\otimes 1_{H}),\text{ for all }h\in H.
\label{eq:unitarity}
\end{eqnarray}
 \end{definition}

Unless it is needed to emphasis the structure of the coalgebra $H$ with multiplication and unit, we will
 write $(H, \omega)$ for a dual quasi-bialgebra.  The map $\omega $ is called the (Drinfeld) \emph{reassociator} of the dual
quasi-bialgebra. Note that if $H$ is cocommutative, then $(H,\omega)$ has
associative multiplication for every reassociator $\omega$.\\

\vspace{1mm}

Following \cite[Section 2]{Schauenburg-Quotients}, we say that $\Phi
:\left( H,   \omega \right) \rightarrow \left(
H^{\prime },
 \omega ^{\prime }\right) $ is a \emph{morphism of dual quasi-bialgebras}
if  $\Phi :  H  \rightarrow
H^{\prime }  $
 is a
morphism of coalgebras with multiplication and unit and
$
 \omega ^{\prime } \circ \Phi^{\otimes 3}   =\omega .
 $

A bijective morphism of dual quasi-bialgebras is an isomorphism.\\
\vspace{1mm}

A \emph{dual quasi-subbialgebra} of a dual quasi-bialgebra {   $\left( H^{\prime
} ,\omega
^{\prime }\right) $} is a dual quasi-bialgebra {$\left( H
,\omega \right)$ }  such that $H$ is a subcoalgebra of $H^{\prime }$ and
the canonical inclusion $\sigma :H\rightarrow H^{\prime }$ is a
morphism of dual quasi-bialgebras.

{We note that by \eqref{eq:quasi-associativity} multiplication in the dual
quasi-subbialgebra $\Bbbk G(H)$ of $H$ is associative.}
%\end{definition}
\vspace{2mm}
\par Let $(H  ,\omega )$ be a dual quasi-bialgebra. It is
well-known that the category $^{H}\mathfrak{M}$ of left $H$-comodules
becomes a monoidal category as follows. Given a left $H$-comodule $V$, we
denote the left coaction of $V$ by $\rho =\rho _{V}^{l}:V\rightarrow H \otimes V,  \rho (v)=v_{-1}\otimes
v_{0}$. The tensor product of two left $H$
-comodules $V$ and $W$ is a comodule via diagonal coaction i.e. $\rho \left(
v\otimes w\right) =  v_{-1} w_{-1}  \otimes v_{0}\otimes w_{0}.$ The
unit is the trivial left $H$-comodule  $\Bbbk$,   i.e. $\rho \left( k\right) =  1_{H}\otimes k$. The associativity and
unit constraints are defined, for all $U,V,W\in {^{H}\mathfrak{M}}$ and $u\in
U,v\in V,w\in W,k\in \Bbbk ,$ by%
\begin{eqnarray}\label{form: assoc constraint}
{^{H}}a_{U,V,W}(u\otimes v\otimes w)&:=&\omega ^{-1}(u_{-1}\otimes
v_{-1}\otimes w_{-1})u_{0}\otimes (v_{0}\otimes w_{0}),  \\
l_{U}(k\otimes u):=ku\qquad &\text{and}&\qquad r_{U}(u\otimes k):=uk. \nonumber
\end{eqnarray}%
The monoidal category we have just described will be denoted by $(^{H}\mathfrak{M
}
,\otimes ,\Bbbk ,{^Ha},l,r).$

The monoidal categories $({\mathfrak{M}^{H}},\otimes ,\Bbbk ,a{^{H}},l,r)$ and
$({^{H}\mathfrak{M}^{H}},\otimes ,\Bbbk ,{^{H}}a{^{H}},l,r)$
are defined similarly. We just point out that
\begin{gather*}
a_{U,V,W}^{H}(u\otimes v\otimes w):=u_{0}\otimes (v_{0}\otimes w_{0})\omega
(u_{1}\otimes v_{1}\otimes w_{1}),\\
{^Ha^H_{U,V,W}}(u\otimes v\otimes w):=\omega ^{-1}(u_{-1}\otimes
v_{-1}\otimes w_{-1})u_{0}\otimes (v_{0}\otimes w_{0})\omega (u_{1}\otimes
v_{1}\otimes w_{1}).
\end{gather*}

\vspace{2mm}

For $\left( H, \omega \right) $   a dual
quasi-bialgebra and  $v:H^{\otimes 2}
 \rightarrow \Bbbk $  a   convolution invertible map,  define maps $m^{v}:H^{\otimes 2} \rightarrow H$ and $%
\omega ^{v}:H^{\otimes 3} \rightarrow \Bbbk $ by setting
\begin{eqnarray}
m^{v} &:&=v\ast m\ast v^{-1}  \label{form: twisted mult} \\
\omega ^{v} &:&=\left( \varepsilon \otimes {v}\right) \ast {v}\left(
H\otimes m\right) \ast \omega \ast {v}^{-1}\left( m\otimes H\right) \ast
\left( {v}^{-1}\otimes \varepsilon \right) .  \label{form: reassociator}
\end{eqnarray}%

\par  If $v$ is a gauge transformation, then the datum%
\begin{equation*}
(H, \omega)^v =(H^{{v}}, \omega^v) =\left( H,m^{v},u,\Delta ,\varepsilon ,\omega ^{v}\right)
\end{equation*}%
is a dual quasi-bialgebra with reassociator $\omega ^{v}$  called
the \emph{twisted dual quasi-bialgebra of }$H$  by $v$.

\begin{remark}\label{rem: if H cocomm}
\par (i) If $a \in \Bbbk^\times$ and $v$ is convolution invertible, then so is $av $, the composition of $v$ with multiplication
by $a$, and $av$ has inverse $a^{-1}v^{-1}$. Note that $m^{v} = m^{av}$ and $\omega^v = \omega^{av}$.
\par (ii) Note  that $(m^v)^{v^{-1}} = m$. It is straightforward to verify that
  $(\omega^v)^{v^{-1}} = \omega$, remembering that the multiplication in $(H^v, \omega^v) $ is $m^v$. Thus, since
  $v^{-1}$ is a gauge transformation for $(H^v, \omega^v)$, we have that $(H^v, \omega^v)^{v^{-1}} \cong (H,\omega)$.
\par (iii) Note that if $H$ is cocommutative then $\omega^v = \partial^2v \ast \omega$ so that $(H^v,\varepsilon^v)=(H,\partial^2v)$.
\end{remark}

\begin{lemma}\label{lm: normalize to a gauge trans} For $\left( H  ,\omega \right) $   a dual
quasi-bialgebra, suppose   $v:H\otimes
H\rightarrow \Bbbk $ is a  convolution invertible map such that $\omega^v$ as defined in (\ref{form: reassociator}) is normalized, i.e.,
satisfies
(\ref{eq:quasi-unitairitycocycle}).
  Then $a v$ is a gauge transformation for $a = v(1 \otimes 1)^{-1} \in \Bbbk^\times$.
\end{lemma}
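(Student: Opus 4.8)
The plan is to reduce the statement to the unitarity of $av$. Convolution invertibility of $av$ is immediate from Remark~\ref{rem: if H cocomm}(i): since $v$ is convolution invertible we have $v(1\otimes 1)v^{-1}(1\otimes 1)=\varepsilon(1)^2=1$, so $a=v(1\otimes 1)^{-1}\in\Bbbk^\times$ is well defined and $av$ is convolution invertible with inverse $a^{-1}v^{-1}$. It therefore remains only to prove $(av)(1\otimes h)=(av)(h\otimes 1)=\varepsilon(h)$ for all $h\in H$. Before doing so I would record the unit bookkeeping for $v$: pairing $v\ast v^{-1}=\varepsilon\otimes\varepsilon$ with $1\otimes 1$ and using $\Delta(1_H)=1_H\otimes 1_H$ gives $v^{-1}(1\otimes 1)=a$, while pairing it with $1\otimes h$ and with $h\otimes 1$ shows that $h\mapsto v^{-1}(1\otimes h)$ and $h\mapsto v^{-1}(h\otimes 1)$ are the convolution inverses in $\mathrm{Hom}(H,\Bbbk)$ of $h\mapsto v(1\otimes h)$ and $h\mapsto v(h\otimes 1)$ respectively. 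These are exactly the cancellations that make the fivefold convolution defining $\omega^v$ collapse.

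Next I would evaluate the definition (\ref{form: reassociator}) of $\omega^v$ at $1_H\otimes 1_H\otimes h$. By $\Delta(1_H)=1_H\otimes 1_H$ the iterated coproduct is $(1\otimes 1\otimes h_{(1)})\otimes\cdots\otimes(1\otimes 1\otimes h_{(5)})$, so the five factors of the convolution contribute $v(1\otimes h_{(1)})$, $v(1\otimes h_{(2)})$, $\omega(1\otimes 1\otimes h_{(3)})$, $v^{-1}(1\otimes h_{(4)})$ and $v^{-1}(1\otimes 1)\varepsilon(h_{(5)})$. The normalization of $\omega$ turns the middle factor into $\varepsilon(h_{(3)})$, and the bookkeeping identities telescope the rest via $v(1\otimes-)\ast v(1\otimes-)\ast v^{-1}(1\otimes-)=v(1\otimes-)$, leaving $\omega^v(1\otimes 1\otimes h)=a\,v(1\otimes h)=(av)(1\otimes h)$. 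Since $\omega^v$ satisfies (\ref{eq:quasi-unitairitycocycle}), taking its stated form $\omega^v(h\otimes 1_H\otimes h')=\varepsilon(h)\varepsilon(h')$ with $h=1$ gives $\omega^v(1\otimes 1\otimes h)=\varepsilon(h)$, whence $(av)(1\otimes h)=\varepsilon(h)$. The mirror computation at $h\otimes 1_H\otimes 1_H$ yields $\omega^v(h\otimes 1\otimes 1)=a^{-1}v^{-1}(h\otimes 1)$; taking the same form of (\ref{eq:quasi-unitairitycocycle}) with $h'=1$ forces $v^{-1}(h\otimes 1)=a\,\varepsilon(h)$, and one convolution-inverse step gives $v(h\otimes 1)=a^{-1}\varepsilon(h)$, i.e.\ $(av)(h\otimes 1)=\varepsilon(h)$. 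Combined with invertibility, this shows $av$ is a gauge transformation. Note that only the single stated form of (\ref{eq:quasi-unitairitycocycle}) is used, so the argument does not rely on the equivalence of the three normalization conditions.

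The main obstacle is purely organizational: correctly tracking the fivefold coproduct of $1\otimes 1\otimes h$ (resp.\ $h\otimes 1\otimes 1$) in (\ref{form: reassociator}) and evaluating each of the five factors on the right tensor slot. The computation is forced once one observes that inserting a unit into an outer slot freezes two of the three tensor positions at $1_H$, so that $\omega$ disappears into its own normalization and the surviving $v,v^{-1}$ factors cancel in pairs by the unit bookkeeping. The one asymmetry to keep in mind is that the outer factors of (\ref{form: reassociator}) carry $v$ on the left and $v^{-1}$ on the right: this is why the $1\otimes 1\otimes h$ evaluation produces $v$ directly, whereas the $h\otimes 1\otimes 1$ evaluation produces $v^{-1}$ and requires the extra inversion step to conclude.
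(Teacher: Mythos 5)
Your proof is correct and follows essentially the same route as the paper's: both evaluate the defining convolution formula (\ref{form: reassociator}) for $\omega^v$ at points with $1_H$ in the middle slot, collapse $\omega$ by its normalization, telescope the $v$ and $v^{-1}$ factors, and finish with the convolution-inverse flip to get $(av)(h\otimes 1)=\varepsilon(h)$. The only difference is cosmetic: the paper computes $\omega^v(h\otimes 1\otimes h')=v(1\otimes h')v^{-1}(h\otimes 1)$ for general $h,h'$ and then specializes $h=1$ and $h'=1$ in turn, whereas you specialize to $1\otimes 1\otimes h$ and $h\otimes 1\otimes 1$ before computing.
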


\begin{proof}
For all $h,h^\prime \in H$,
\begin{equation*}
\varepsilon(h)\varepsilon(h^\prime) \overset{(\ref{eq:quasi-unitairitycocycle})}{=}
 \omega^v( h \otimes 1 \otimes h^\prime)
 \overset{(\ref{form: reassociator})}{=}
%  v(1\otimes h^\prime_1)v(h_1 \otimes h^\prime_2)v^{-1}(h_2 \otimes h^\prime_3)v^{-1}(h_3 \otimes 1)
%=
v(1 \otimes h^\prime)v^{-1}(h \otimes 1).
\end{equation*}
Setting $h$ and $h^\prime$ equal to $1$ in turn,   we obtain for all $h \in H$,
\begin{equation*}
\varepsilon(h) = v(1 \otimes 1) v^{-1}(h \otimes 1)= a^{-1} v^{-1}(h \otimes 1)\quad\text{  and   }\quad  \varepsilon(h ) = v(1 \otimes h ) v^{-1}(1 \otimes 1)=a v(1 \otimes h ) .
\end{equation*}
Since $a^{-1} v^{-1}(h \otimes 1)=\varepsilon(h)$ and $a v$ is the inverse of $a^{-1} v^{-1}$ then $av(h \otimes 1)=\varepsilon(h)$ also.
\end{proof}

\begin{corollary}\label{co: delta2vnormal} Let $H$ be a cocommutative bialgebra. If $v: H \otimes H \rightarrow \Bbbk$ is a
convolution invertible map such that $\partial^2v$
is a normalized cocycle, then $av$ is  normalized for $a = v(1 \otimes 1)^{-1} \in \Bbbk^\times$.
\end{corollary}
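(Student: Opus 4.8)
The plan is to recognize this corollary as a direct specialization of Lemma \ref{lm: normalize to a gauge trans} to the \emph{trivial} reassociator, using the cocommutativity of $H$ through Remark \ref{rem: if H cocomm}(iii). First I would equip the cocommutative bialgebra $H$ with the structure of a dual quasi-bialgebra $(H,\varepsilon)$, where $\varepsilon$ denotes the convolution unit $\varepsilon\otimes\varepsilon\otimes\varepsilon$ of $\mathrm{Hom}(H^{\otimes 3},\Bbbk)$, i.e. the trivial reassociator. This is legitimate: the trivial reassociator is a normalized $3$-cocycle, since every factor appearing in \eqref{eq:3-cocycle} collapses to the convolution unit on $H^{\otimes 4}$ so that both sides agree, and the quasi-associativity constraint \eqref{eq:quasi-associativity} reduces to ordinary associativity of $m$, which holds because $H$ is a bialgebra (consistent with the remark that a cocommutative $H$ has associative multiplication for \emph{every} reassociator). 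It is moreover normalized in the sense of \eqref{eq:quasi-unitairitycocycle}.

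Next I would compute $\omega^v$ as in \eqref{form: reassociator} for this choice $\omega=\varepsilon$. Since $H$ is cocommutative, Remark \ref{rem: if H cocomm}(iii) gives $\omega^v=\partial^2v\ast\omega=\partial^2v\ast\varepsilon=\partial^2v$, because $\varepsilon$ is the identity for the convolution product $\ast$. By hypothesis $\partial^2v$ is a normalized cocycle, so $\omega^v=\partial^2v$ is normalized, i.e. it satisfies \eqref{eq:quasi-unitairitycocycle}. This is exactly the hypothesis needed to invoke Lemma \ref{lm: normalize to a gauge trans} for the dual quasi-bialgebra $(H,\varepsilon)$ together with the convolution invertible map $v$.

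Applying that lemma, I conclude that $av$ is a gauge transformation for $a=v(1\otimes 1)^{-1}\in\Bbbk^{\times}$; by the definition of gauge transformation this means in particular that $av$ is normalized, which is the assertion. I do not anticipate a genuine obstacle, as the argument is a straightforward reduction. The only points requiring care are verifying that $(H,\varepsilon)$ really is a dual quasi-bialgebra so that the lemma may be applied, and correctly reading off the identity $\omega^v=\partial^2v$ from the cocommutative case of Remark \ref{rem: if H cocomm}(iii) specialized to the trivial $\omega$.
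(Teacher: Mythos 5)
Your proof is correct and is essentially the paper's own argument: the paper likewise applies Lemma \ref{lm: normalize to a gauge trans} with $\omega=\varepsilon$ and uses cocommutativity of $H$ to identify $\omega^v=\partial^2v$. Your write-up merely makes explicit the routine verifications (that $(H,\varepsilon)$ is a dual quasi-bialgebra and that $\varepsilon$ is the convolution unit) which the paper leaves implicit.
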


\begin{proof}
Take $\omega = \varepsilon$ in Lemma \ref{lm: normalize to a gauge trans}. Since $H$ is cocommutative, $\omega^v=\partial^2v.$
\end{proof}

\begin{proposition}
\label{pro:omegav}For $\sigma :(H,\omega_H) \rightarrow (A, \omega_A)$ a morphism of dual
quasi-bialgebras and   $v:A^{\otimes 2}\rightarrow \Bbbk $   a gauge
transformation for $A$, then {$v\circ(\sigma \otimes \sigma)$} is  a gauge transformation for $H$. Also
\begin{equation*}
\omega _{A}^{v}{\circ} \sigma^{\otimes {3}}     =\omega
_{H}^{{v}{\circ} \sigma^{\otimes 2}   }\qquad \text{and}\qquad
m_{A}^{{v}}{\circ} \sigma^{\otimes 2}   =\sigma m_{H}^{{v}{\circ}
\sigma^{\otimes 2}   }.
\end{equation*}%
Thus $\sigma :(H^{{v}{\circ} \sigma^{\otimes 2}   }, \omega_H^{{v}{\circ} \sigma^{\otimes 2}   })\rightarrow (A^{{v}}, \omega_A^v)
 $ is also a morphism of dual quasi-bialgebras between the twisted dual quasi-bialgebras obtained from $(H, \omega_H)$ and $(A, \omega_A)$.
\end{proposition}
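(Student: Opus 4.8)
The plan is to reduce everything to one elementary principle: since $\sigma$ is a coalgebra map, so is each $\sigma^{\otimes n}:H^{\otimes n}\rightarrow A^{\otimes n}$, and therefore precomposition with $\sigma^{\otimes n}$ distributes over the convolution product, i.e. $(f_{1}\ast \cdots \ast f_{k})\circ \sigma^{\otimes n}=(f_{1}\circ \sigma^{\otimes n})\ast \cdots \ast (f_{k}\circ \sigma^{\otimes n})$ for maps $f_{i}$ out of $A^{\otimes n}$. This is immediate from $\Delta_{A^{\otimes n}}\circ \sigma^{\otimes n}=(\sigma^{\otimes n}\otimes \sigma^{\otimes n})\circ \Delta_{H^{\otimes n}}$ together with associativity of convolution. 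Throughout write $w:=v\circ \sigma^{\otimes 2}$.

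First I would check that $w$ is a gauge transformation for $H$. Convolution invertibility follows from the distributivity principle applied to $v\ast v^{-1}=\varepsilon_{A^{\otimes 2}}$ and $v^{-1}\ast v=\varepsilon_{A^{\otimes 2}}$: composing with $\sigma^{\otimes 2}$ and using that $\sigma^{\otimes 2}$ preserves counits shows that $v^{-1}\circ \sigma^{\otimes 2}$ is the two-sided convolution inverse of $w$. Normalization is a direct computation: $w(1_{H}\otimes h)=v(\sigma(1_{H})\otimes \sigma(h))=v(1_{A}\otimes \sigma(h))=\varepsilon_{A}(\sigma(h))=\varepsilon_{H}(h)$, using that $\sigma$ preserves unit and counit, and symmetrically for $w(h\otimes 1_{H})$.

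Next, the two displayed identities. For the reassociator I would substitute the defining formula (\ref{form: reassociator}) for $\omega_{A}^{v}$, precompose with $\sigma^{\otimes 3}$, and apply distributivity to split the five convolution factors. Each factor then simplifies using the morphism properties of $\sigma$: the outer factors $(\varepsilon_{A}\otimes v)$ and $(v^{-1}\otimes \varepsilon_{A})$ become $(\varepsilon_{H}\otimes w)$ and $(w^{-1}\otimes \varepsilon_{H})$; the factors $v(A\otimes m_{A})$ and $v^{-1}(m_{A}\otimes A)$ become $w(H\otimes m_{H})$ and $w^{-1}(m_{H}\otimes H)$ after using $m_{A}(\sigma \otimes \sigma)=\sigma m_{H}$ together with $w^{-1}=v^{-1}\circ \sigma^{\otimes 2}$; and the middle factor $\omega_{A}\circ \sigma^{\otimes 3}$ equals $\omega_{H}$ by the very definition of a morphism of dual quasi-bialgebras. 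Reassembling gives exactly the right-hand side of (\ref{form: reassociator}) for $(H,\omega_{H})$ twisted by $w$, namely $\omega_{H}^{w}$. For the multiplication identity I would likewise expand $m_{A}^{v}=v\ast m_{A}\ast v^{-1}$, precompose with $\sigma^{\otimes 2}$ to obtain $w\ast (\sigma m_{H})\ast w^{-1}$ via $m_{A}(\sigma \otimes \sigma)=\sigma m_{H}$, and then note that because $w$ and $w^{-1}$ are scalar-valued and $\sigma$ is linear, the scalar factors pass through $\sigma$, so $w\ast (\sigma m_{H})\ast w^{-1}=\sigma \circ (w\ast m_{H}\ast w^{-1})=\sigma \circ m_{H}^{w}$.

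Finally, the concluding assertion is a matter of assembling the pieces. Since $v$ is a gauge transformation for $A$, the datum $(A^{v},\omega_{A}^{v})$ is a dual quasi-bialgebra by the construction preceding Remark \ref{rem: if H cocomm}, and since $w$ is a gauge transformation for $H$ by the first step, $(H^{w},\omega_{H}^{w})$ is likewise a dual quasi-bialgebra. Twisting changes neither the coalgebra structure nor the unit, so $\sigma$ remains a coalgebra map preserving the unit; the multiplication compatibility $m_{A}^{v}(\sigma \otimes \sigma)=\sigma m_{H}^{w}$ and the reassociator compatibility $\omega_{A}^{v}\circ \sigma^{\otimes 3}=\omega_{H}^{w}$ are precisely the two identities just established. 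Hence $\sigma$ is a morphism of dual quasi-bialgebras between the twisted objects. The only real content is the distributivity principle of the first paragraph; everything else is bookkeeping with the defining convolution formulas, and the one point to watch is keeping the scalar-valued factors $v^{\pm 1}$ (respectively $w^{\pm 1}$) in their correct positions when passing them through $\sigma$ in the multiplication identity.
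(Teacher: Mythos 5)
Your proposal is correct and follows essentially the same route as the paper's own proof: expand $\omega_A^v$ and $m_A^v$ via the defining convolution formulas, use that precomposition with the coalgebra map $\sigma^{\otimes n}$ distributes over convolution, and simplify each factor with the morphism identities $\varepsilon_A\sigma=\varepsilon_H$, $m_A(\sigma\otimes\sigma)=\sigma m_H$, and $\omega_A\circ\sigma^{\otimes 3}=\omega_H$. You are in fact slightly more complete than the paper, which omits the explicit check that $v\circ(\sigma\otimes\sigma)$ is a gauge transformation and the final assembly of the concluding assertion.
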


\begin{proof}
We have%
\begin{eqnarray*}
 \omega _{A}^{v} {\circ} \sigma^{\otimes 3}
&\overset{(\ref{form: reassociator})}{=}&  \left[ \left( \varepsilon _{A}\otimes {v}\right) \ast {v}\left( A\otimes
m_{A}\right) \ast \omega _{A}\ast {v}^{-1}\left( m_{A}\otimes A\right) \ast
\left( {v}^{-1}\otimes \varepsilon _{A}\right) \right]  {\circ}  \sigma^{\otimes 3}
 \\
&=&
\left( \varepsilon _{H}\otimes {v}  \sigma^{\otimes 2}
\right) \ast {v}  \sigma^{\otimes 2}   \left( H\otimes
m_{H}\right)
% \\
\ast \omega _{H}\ast {v}^{-1}  \sigma^{\otimes 2}  \left(
m_{H}\otimes H\right) \ast \left( {v}^{-1}  \sigma^{\otimes 2}
  \otimes \varepsilon _{H}\right)%
%\end{array}%
%\right]
\\
&\overset{(\ref{form: reassociator})}{=}&\omega _{H}^{{v}  \sigma^{\otimes 2}}.
\end{eqnarray*}%
Also
\begin{eqnarray*}
m_{A}^{{v}}  \sigma^{\otimes 2}
&\overset{(\ref{form: twisted mult})}{=}&
\left[ v\ast m_{A}\ast
v^{-1}\right] \left( \sigma^{\otimes 2} \right) =v  \sigma^{\otimes 2}   \ast m_{A}  \sigma^{\otimes 2}   \ast v^{-1}
\sigma^{\otimes 2}
\\
&=&v  \sigma^{\otimes 2}   \ast \sigma m_{H}\ast  [v
\sigma^{\otimes 2}  ] ^{-1}
= \sigma m_{H}^{{v}  \sigma^{\otimes 2}   }.
\end{eqnarray*}
\end{proof}

\begin{definition}
Dual quasi-bialgebras $A$ and $B$ are called \emph{quasi-isomorphic} (or
equivalent) whenever $(A, \omega_A) \cong (B^{v}, \omega_B^v)$ as dual quasi-bialgebras for some gauge
transformation ${v}\in \left( B\otimes B\right) ^{\ast }$.
\end{definition}

By Remark \ref{rem: if H cocomm}-ii., if $(A, \omega_A)  \cong (B^{v}, \omega_B^v)$,
then $(B, \omega_B) \cong (A,\omega_A)^{v^{-1}}$.

\begin{corollary}\label{coro:subgauge}
 If $\sigma :(H,\omega_H) \rightarrow (A, \omega_A)$ is a morphism of dual
quasi-bialgebras and $(A,\omega _{A})$  is  quasi-isomorphic   to an ordinary
bialgebra so  is $\left( H,\omega
_{H}\right) $.

\end{corollary}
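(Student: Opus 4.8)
The plan is to reduce everything to a single application of Proposition \ref{pro:omegav}. First I would unwind what it means for $(A,\omega_A)$ to be quasi-isomorphic to an ordinary bialgebra $B$. By definition there is a gauge transformation $v\in (B\otimes B)^\ast$ together with an isomorphism $(A,\omega_A)\cong (B^{v},\omega_B^{v})$; applying Remark \ref{rem: if H cocomm}-ii and transporting $v^{-1}$ across this isomorphism I obtain a gauge transformation --- which I will again call $v$ --- for $A$ itself, with $(A^{v},\omega_A^{v})$ isomorphic to the ordinary bialgebra $B$. Since a morphism of dual quasi-bialgebras intertwines reassociators, since $B$ has trivial reassociator $\omega_B=\varepsilon^{\otimes 3}$, and since an isomorphism is in particular a coalgebra map (so it carries $\varepsilon_B$ to the counit of $A^v$, which is $\varepsilon_A$), this forces $\omega_A^{v}=\varepsilon_A^{\otimes 3}$. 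Thus the actual content of the hypothesis is: there is a gauge transformation $v$ for $A$ with $\omega_A^{v}$ trivial.

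Next I would feed $\sigma$ and $v$ into Proposition \ref{pro:omegav}. That proposition guarantees that $w:=v\circ(\sigma\otimes\sigma)$ is a gauge transformation for $H$ and that
\[
\omega_H^{\,w}=\omega_A^{v}\circ\sigma^{\otimes 3}.
\]
Using that $\sigma$ is a morphism of coalgebras, so that $\varepsilon_A\circ\sigma=\varepsilon_H$, I would then compute
\[
\omega_A^{v}\circ\sigma^{\otimes 3}=\varepsilon_A^{\otimes 3}\circ\sigma^{\otimes 3}=(\varepsilon_A\circ\sigma)^{\otimes 3}=\varepsilon_H^{\otimes 3},
\]
so that $\omega_H^{\,w}=\varepsilon_H^{\otimes 3}$ is the trivial reassociator.

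Finally I would observe that a dual quasi-bialgebra with trivial reassociator is an ordinary bialgebra: when $\omega=\varepsilon^{\otimes 3}$ the factor $u\omega$ appearing in \eqref{eq:quasi-associativity} is precisely the convolution unit $u\circ\varepsilon^{\otimes 3}$ of $\mathrm{Hom}(H^{\otimes 3},H)$, so \eqref{eq:quasi-associativity} collapses to $m(H\otimes m)=m(m\otimes H)$, i.e.\ associativity. Hence $(H^{w},\omega_H^{w})$ is an ordinary bialgebra, and by Remark \ref{rem: if H cocomm}-ii we have $(H,\omega_H)\cong (H^{w},\omega_H^{w})^{w^{-1}}$, exhibiting $(H,\omega_H)$ as quasi-isomorphic to the ordinary bialgebra $(H^{w},\omega_H^{w})$. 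The computation itself is essentially one line once the hypothesis has been translated into ``$\omega_A^{v}$ is trivial,'' so the only delicate point I foresee is the bookkeeping in the first paragraph: ensuring that the gauge transformation genuinely lives on $A$ rather than on the auxiliary bialgebra $B$, so that Proposition \ref{pro:omegav} applies verbatim.
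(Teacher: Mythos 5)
Your proof is correct and takes essentially the same route as the paper's: both reduce to a single application of Proposition \ref{pro:omegav} to $\sigma$ and a gauge transformation $v$ on $A$ with $\omega_A^{v}$ trivial, yielding $\omega_H^{v(\sigma\otimes\sigma)}=\omega_A^{v}\circ\sigma^{\otimes 3}=\varepsilon_H^{\otimes 3}$. The only difference is that the paper begins directly from ``suppose $\gamma_A$ is a gauge transformation with $A^{\gamma_A}$ having trivial reassociator'' and stops at ``$H^{\gamma_H}$ has trivial reassociator,'' eliding the translation at both ends (via Remark \ref{rem: if H cocomm}-ii) that you carry out explicitly and correctly.
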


\begin{proof}
  Suppose  $\gamma _{A}:A\otimes A\rightarrow \Bbbk $ is
a gauge transformation such that $
A^{\gamma _{A}}$ has trivial reassociator. Then $\gamma _{H}:=\gamma
_{A}\left( \sigma \otimes \sigma \right) :H\otimes H\rightarrow \Bbbk $ is a
gauge transformation,  and, by Proposition \ref{pro:omegav}, the map $\sigma :(H^{\gamma
_{H}},\omega _{H}^{\gamma _{H}})\rightarrow (A^{\gamma _{A}},\omega
_{A}^{\gamma _{A}}=\varepsilon _{A^{\otimes 3} })$ is a morphism of
dual quasi-bialgebras. Hence
\begin{equation*}
\omega _{H^{\gamma _{H}}}=\omega _{H}^{\gamma _{H}}=\omega _{A}^{\gamma
_{A}} \circ  \sigma^{ \otimes 3}  =\varepsilon
_{A^{\otimes 3} }  \circ  \sigma^{ \otimes 3}
 =\varepsilon _{H^{\otimes 3}}
\end{equation*}%
so that $H^{\gamma _{H}}$ has trivial reassociator.
\end{proof}

\section{Quasi-Yetter-Drinfeld data for dual quasi-bialgebras \label{sec:quasiYD}}

\subsection{Yetter-Drinfeld modules}\label{subsec: YD modules}

In this subsection, we first recall some facts
from \cite{Ardi-Pava2} about the
category of Yetter-Drinfeld modules for a dual quasi-bialgebra.

\begin{definition}[{\cite[Definition 3.1]{Ardi-Pava2}}]
\label{def: YD}   For $(H, \omega)$   a dual
quasi-bialgebra, the category ${_{H}^{H}\mathcal{YD}}$ of Yetter-Drinfeld
modules over $H$  is defined as follows. An object
is a tern $\left( V,\rho _{V},\vartriangleright \right) ,$ where

%\begin{itemize}
%\item
 $(V,\rho )$ is an object in ${^{H}\mathfrak{M}}$
and
%\item
$ \mu:H\otimes V\rightarrow V$ is a $\Bbbk$-linear map written $h \otimes v \mapsto h\vartriangleright v$
such that, for all $h,l\in H$ and $v\in V$
\begin{eqnarray}
&&\left( hl\right) \vartriangleright v=\left[
\begin{array}{c}
\omega ^{-1}\left( h_{1}\otimes l_{1}\otimes v_{-1}\right) \omega \left(
h_{2}\otimes \left( l_{2}\vartriangleright v_{0}\right) _{-1}\otimes
l_{3}\right) \\
\omega ^{-1}\left( (h_{3}\vartriangleright (l_{2}\vartriangleright
v_{0})_{0}\right) _{-1}\otimes h_{4}\otimes l_{4})\left(
h_{3}\vartriangleright \left( l_{2}\vartriangleright v_{0}\right)
_{0}\right) _{0}%
\end{array}%
\right] ,  \label{ass YD}  \\
%\end{equation}%
%\begin{equation}
&& 1_{H}\vartriangleright v=v  \label{unitYD} \text{   and   }\\
%
%and
%
%\begin{equation}
&& \left( h_{1}\vartriangleright v\right) _{-1}h_{2}\otimes \left(
h_{1}\vartriangleright v\right) _{0}=h_{1}v_{-1}\otimes \left(
h_{2}\vartriangleright v_{0}\right).  \label{Comp YD}
\end{eqnarray}
%\end{itemize}

A morphism $f:(V,\rho ,\vartriangleright )\rightarrow (V^{\prime },\rho
^{\prime },\vartriangleright ^{\prime })$ in $_{H}^{H}\mathcal{YD}$ is a
morphism $f:(V,\rho )\rightarrow (V^{\prime },\rho ^{\prime })$ in $^{H}%
\mathfrak{M}$ such that%
\begin{equation*}
f(h\vartriangleright v)=h\vartriangleright ^{\prime }f(v).
\end{equation*}

\end{definition}

\vspace{2mm}

\begin{remark}\label{rem: weak right centre}
The category ${_{H}^{H}\mathcal{YD}}$ is isomorphic to the weak right center
of ${^{H}\mathfrak{M,}}$ see \cite[Theorem A.2.]{Ardi-Pava2}. As a
consequence ${_{H}^{H}\mathcal{YD}}$ has a prebraided monoidal structure
given as follows. The unit is $\Bbbk $ regarded as an object in ${_{H}^{H}%
\mathcal{YD}}$ via the trivial structures $\rho _{\Bbbk }\left( k\right)
=1_{H}\otimes k$ and $h\vartriangleright k=\varepsilon _{H}\left( h\right)
k. $ The tensor product is defined by
\begin{equation*}
\left( V,\rho _{V},\vartriangleright \right) \otimes \left( W,\rho
_{W},\vartriangleright \right) =\left( V\otimes W,\rho _{V\otimes
W},\vartriangleright \right)
\end{equation*}%
where $\rho _{V\otimes W}\left( v\otimes w\right) =v_{-1}w_{-1}\otimes
v_{0}\otimes w_{0}$ and%
\begin{equation}
h\vartriangleright \left( v\otimes w\right) =\left[
\begin{array}{c}
\omega \left( h_{1}\otimes v_{-1}\otimes w_{-2}\right) \omega ^{-1}\left(
\left( h_{2}\vartriangleright v_{0}\right) _{-2}\otimes h_{3}\otimes
w_{-1}\right) \\
\omega \left( \left( h_{2}\vartriangleright v_{0}\right) _{-1}\otimes \left(
h_{4}\vartriangleright w_{0}\right) _{-1}\otimes h_{5}\right) \left(
h_{2}\vartriangleright v_{0}\right) _{0}\otimes \left(
h_{4}\vartriangleright w_{0}\right) _{0}%
\end{array}%
\right] .  \label{form:YDtens}
\end{equation}%
The constraints are the same as ${^{H}\mathfrak{M}}$ i.e.
\begin{eqnarray*}
{^{H}}a_{U,V,W}(u\otimes v\otimes w) &:&=\omega ^{-1}(u_{-1}\otimes
v_{-1}\otimes w_{-1})u_{0}\otimes (v_{0}\otimes w_{0}), \\
l_{U}(k\otimes u) &:&=ku\qquad \text{and}\qquad r_{U}(u\otimes k):=uk.
\end{eqnarray*}%
viewed as morphisms in ${_{H}^{H}\mathcal{YD}}$. The prebraiding $%
c_{V,W}:V\otimes W\rightarrow W\otimes V$ is given by
\begin{equation}
c_{V,W}\left( v\otimes w\right) =\left( v_{-1}\vartriangleright w\right)
\otimes v_{0}.  \label{braiding YD}
\end{equation}
\end{remark}

\vspace{2mm}

\begin{remark}
\label{rem:tensor}The coproduct of a family $\left( V^{i}\right) _{i\in I}$
of objects in ${_{H}^{H}\mathcal{YD}}$ is the vector space $\oplus _{i\in
I}V^{i}$ regarded as an object in ${_{H}^{H}\mathcal{YD}}$ via the action
and the coaction defined by
\begin{equation*}
h\vartriangleright \left( v^{i}\right) _{i\in I}=\left( h\vartriangleright
v^{i}\right) _{i\in I}\qquad \text{and}\qquad \rho \left( \left(
v^{i}\right) _{i\in I}\right) =\sum_{i\in I}v_{-1}^{i}\otimes u_{i}\left(
v_{0}^{i}\right)
\end{equation*}%
respectively, where $u_{i}:V^{i}\rightarrow \oplus _{i\in I}V^{i}$ is the
canonical injection. Let $W\in {_{H}^{H}\mathcal{YD}}$. By the universal
property of the coproduct, the canonical morphisms $W\otimes u_{i}:W\otimes
V^{i}\rightarrow W\otimes \left( \oplus _{i\in I}V^{i}\right) $ yield a
morphism in ${_{H}^{H}\mathcal{YD}}$
\begin{equation*}
\oplus _{i\in I}\left( W\otimes V^{i}\right) \rightarrow W\otimes \left(
\oplus _{i\in I}V^{i}\right) .
\end{equation*}%
This is bijective because it is bijective at the level of vector spaces.  This proves  that the functor $W\otimes \left( -\right) :{_{H}^{H}\mathcal{%
YD}}\rightarrow {_{H}^{H}\mathcal{YD}}$ commutes with coproducts. Similarly $%
\left( -\right) \otimes W:{_{H}^{H}\mathcal{YD}}\rightarrow {_{H}^{H}%
\mathcal{YD}}$ commutes with coproducts.

Therefore we can apply \cite[Theorem 2, page 172]{Mac Lane} to construct a
left adjoint $T:{_{H}^{H}\mathcal{YD}}\rightarrow \mathrm{Mon}\left( {%
_{H}^{H}\mathcal{YD}}\right) $ of the forgetful functor. For every $V\in {%
_{H}^{H}\mathcal{YD}}$, the algebra $T\left( V\right) $ will be called the
\emph{tensor algebra of }$V$ in ${_{H}^{H}\mathcal{YD}}$. By standard
arguments one can endow $T:=T\left( V\right) $ with a bialgebra structure in
${_{H}^{H}\mathcal{YD}}$ where the comultiplication $\Delta _{T}$ and the
counit $\varepsilon _{T}$ are uniquely defined by setting $\Delta _{T}\left(
v\right) =v\otimes 1_{T}+1_{T}\otimes v$ and $\varepsilon _{T}\left(
v\right) =0.$
\end{remark}

\vspace{2mm}

The next theorem from \cite{Ardi-Pava2} gives the structure of a bosonization in this setting.

\begin{theorem}[{\cite[Theorem 5.2]{Ardi-Pava2}}]\label{teo:RsmashH} Let $\left(H,\omega _{H}\right) $ be a dual
quasi-bialgebra.

Let $(R,\mu _{R},\rho _{R},\Delta _{R},\varepsilon _{R},m_{R},u_{R})$ be a
bialgebra in $_{H}^{H}\mathcal{YD}$ and use the following notations%
\begin{eqnarray*}
h\vartriangleright r &:&=\mu _{R}\left( h\otimes r\right) ,\qquad
r_{-1}\otimes r_{0}:=\rho _{R}\left( r\right) , \\
r\cdot _{R}s &:&=m_{R}\left( r\otimes s\right) ,\qquad 1_{R}:=u_{R}\left(
1_{\Bbbk }\right) , \\
r^{1}\otimes r^{2} &:&=\Delta _{R}\left( r\right) .
\end{eqnarray*}

Consider on $B:=R\otimes H$ the following structures:%
\begin{eqnarray*}
m_{B}[(r\otimes h)\otimes (s\otimes k)] &=&\left[
\begin{array}{c}
\omega _{H}^{-1}(r_{-2}\otimes h_{1}\otimes s_{-2}k_{1})\omega
_{H}(h_{2}\otimes s_{-1}\otimes k_{2}) \\
\omega _{H}^{-1}[(h_{3}\vartriangleright s_{0})_{-2}\otimes h_{4}\otimes
k_{3}]\omega _{H}(r_{-1}\otimes (h_{3}\vartriangleright s_{0})_{-1}\otimes
h_{5}k_{4}) \\
r_{0}\cdot _{R}(h_{3}\vartriangleright s_{0})_{0}\otimes h_{6}k_{5}%
\end{array}%
\right] \\
u_{B}(k) &=&k1_{R}\otimes 1_{H} \\
\Delta _{B}(r\otimes h) &=&\omega _{H}^{-1}(r_{-1}^{1}\otimes
r_{-2}^{2}\otimes h_{1})r_{0}^{1}\otimes r_{-1}^{2}h_{2}\otimes
r_{0}^{2}\otimes h_{3} \\
\varepsilon _{B}(r\otimes h) &=&\varepsilon _{R}(r)\varepsilon _{H}(h) \\
\omega _{B}((r\otimes h)\otimes (s\otimes k)\otimes (t\otimes l))
&=&\varepsilon _{R}(r)\varepsilon _{R}(s)\varepsilon _{R}(t)\omega
_{H}(h\otimes k\otimes l).
\end{eqnarray*}%
Then $(B,\Delta _{B},\varepsilon _{B},m_{B},u_{B},\omega _{B})$ is a dual
quasi-bialgebra.
\end{theorem}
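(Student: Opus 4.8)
The plan is to verify directly that the datum $(B,\Delta_B,\varepsilon_B,m_B,u_B,\omega_B)$ satisfies the axioms defining a dual quasi-bialgebra: that $(B,\Delta_B,\varepsilon_B)$ is a coalgebra, that $m_B$ and $u_B$ are coalgebra morphisms (so $B$ is a coalgebra with multiplication and unit), that $\omega_B$ is a normalized $3$-cocycle, and finally that the quasi-associativity relation \eqref{eq:quasi-associativity} and the unit constraints \eqref{eq:unitarity} hold. The organizing principle is that $B=R\otimes H$ is the bosonization of the bialgebra $R$ living in the prebraided monoidal category $_H^H\mathcal{YD}$ (Remark \ref{rem: weak right centre}): every structure map on $B$ is assembled from the corresponding structure map of $R$ in the category together with the action $\vartriangleright$, the coaction $\rho_R$, and the scalars $\omega_H^{\pm1}$ coming from the associativity constraints $^Ha^H$.

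I would dispatch the easy ingredients first. The counit axiom, the triviality of $u_B$ as a coalgebra map, and the unit constraints \eqref{eq:unitarity} all follow immediately from $\varepsilon_R(1_R)=1$, from $1_H\vartriangleright v=v$ \eqref{unitYD}, and from the normalization of $\omega_H$. Likewise, since $\omega_B$ depends only on the $H$-legs via $\omega_B((r\otimes h)\otimes(s\otimes k)\otimes(t\otimes l))=\varepsilon_R(r)\varepsilon_R(s)\varepsilon_R(t)\,\omega_H(h\otimes k\otimes l)$, both its convolution invertibility and the cocycle identity \eqref{eq:3-cocycle} for $\omega_B$ reduce at once to the corresponding properties of $\omega_H$, and normalization of $\omega_B$ follows from that of $\omega_H$ together with $\varepsilon_R(1_R)=1$.

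The substantive steps are the coalgebra and bialgebra compatibilities. To check coassociativity of $\Delta_B$ I would expand $(\Delta_B\otimes B)\Delta_B$ and $(B\otimes\Delta_B)\Delta_B$ and match the two nested $\omega_H^{-1}$ factors; this is precisely the coassociativity of the smash coproduct in $^H\mathfrak{M}$, and it consumes the coassociativity of $\Delta_R$ in the category together with \eqref{eq:3-cocycle} read for $\omega_H^{-1}$. To show $m_B$ is a coalgebra morphism I would verify $\varepsilon_B m_B=\varepsilon_B\otimes\varepsilon_B$ (immediate) and the multiplicativity $\Delta_B m_B=(m_B\otimes m_B)\Delta_{B\otimes B}$ for the tensor-product coalgebra $B\otimes B$; expanding both sides reduces this to the bialgebra compatibility of $R$ in $_H^H\mathcal{YD}$ (relating $m_R$ and $\Delta_R$ through the prebraiding $c$ of \eqref{braiding YD}), together with the Yetter-Drinfeld compatibility \eqref{Comp YD} used to commute the action $\vartriangleright$ past the coaction $\rho_R$, and several applications of \eqref{eq:3-cocycle}.

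The main obstacle, as always for bosonizations, is the quasi-associativity relation \eqref{eq:quasi-associativity} for $m_B$ with reassociator $\omega_B$. I would expand both $m_B(B\otimes m_B)$ and $m_B(m_B\otimes B)$ on a general triple $(r\otimes h)\otimes(s\otimes k)\otimes(t\otimes l)$; the resulting expressions carry a forest of $\omega_H^{\pm1}$ scalars and iterated actions such as $h\vartriangleright(k\vartriangleright t)$. The crux is the Yetter-Drinfeld axiom \eqref{ass YD} for $(hl)\vartriangleright v$, which converts the iterated action on one side into the single action on the other at the cost of exactly the three $\omega_H^{\pm1}$ factors it carries; repeated use of \eqref{eq:3-cocycle} then collects all the scalars, and the only surviving discrepancy between the two sides is the single factor $\omega_H(h\otimes k\otimes l)$, which is exactly the scalar recorded by $\omega_B$. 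Keeping the Sweedler indices on $H$ aligned while \eqref{ass YD} and \eqref{eq:3-cocycle} are applied is the real labor. A cleaner alternative is to invoke the general bosonization theorem for a bialgebra in the weak right center of $^H\mathfrak{M}$ and merely check that the associativity obstruction of the resulting dual quasi-bialgebra is represented by $\omega_B$; but either route reduces to the same core identity.
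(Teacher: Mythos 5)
The first thing to say is that there is no in-paper proof to compare against: Theorem \ref{teo:RsmashH} is quoted verbatim, with attribution, from \cite[Theorem 5.2]{Ardi-Pava2}, and the present paper simply imports it as a known result. So the only meaningful question is whether your outline would stand as a proof on its own, and as written it does not: it is a roadmap. Your ingredient list is the correct one --- coassociativity of the smash coproduct, multiplicativity of $\Delta_B$ via the prebraiding \eqref{braiding YD} and the compatibility \eqref{Comp YD}, and quasi-associativity \eqref{eq:quasi-associativity} hinging on \eqref{ass YD} together with repeated use of \eqref{eq:3-cocycle} --- but none of the three substantive identities is actually carried out, and for a statement of this kind the entire mathematical content lives in exactly that Sweedler-index bookkeeping. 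The theorem is true for precisely this placement of indices in $m_B$ and $\Delta_B$ (the factor $h_3\vartriangleright s_0$ flanked by reassociator terms in $s_{-2},s_{-1}$ and $h_1,\dots,h_6$, the twist $\omega_H^{-1}(r^1_{-1}\otimes r^2_{-2}\otimes h_1)$ in $\Delta_B$) and false for superficially similar placements; your outline, which never touches the indices, would ``confirm'' a wrong formula just as smoothly as the right one.

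Two specific soft spots. First, your claim that invertibility, normalization and the cocycle identity for $\omega_B$ ``reduce at once'' to those of $\omega_H$ is too quick: \eqref{eq:3-cocycle} for $\omega_B$ involves $m_B$ and the coproduct of $B^{\otimes 4}$, so the reduction requires first proving the identities $\pi m_B=m_H(\pi\otimes\pi)$ and $(\pi\otimes\pi)\Delta_B=\Delta_H\pi$ for $\pi:=\varepsilon_R\otimes H$ --- the content of Lemma \ref{lm: pi} --- after which one may write $\omega_B=\omega_H\circ\pi^{\otimes 3}$ and invoke the fact that (normalized) $3$-cocycles pull back along morphisms of coalgebras with multiplication and unit. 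That step is short but it is a step, and it must come before, not after, the discussion of $\omega_B$. Second, your ``cleaner alternative'' is circular: for a dual quasi-bialgebra $H$, ``the general bosonization theorem for a bialgebra in the weak right center of ${}^{H}\mathfrak{M}$'' \emph{is} the theorem under discussion --- the identification of ${}_{H}^{H}\mathcal{YD}$ with the weak right center (Remark \ref{rem: weak right centre}) is itself taken from \cite{Ardi-Pava2}, and no earlier off-the-shelf bosonization result covers the non-strict dual-quasi setting; that is exactly why \cite{Ardi-Pava2} had to prove its Theorem 5.2 by the kind of direct verification you sketch. So the computation you defer is unavoidable, and until it is written out the proof does not yet exist.
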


\begin{definition}[{\cite[Definition 5.4]{Ardi-Pava2}}]
For $H,R,B$ as in
Theorem \ref{teo:RsmashH}, the dual quasi-bialgebra $B$ will be called the \emph{%
bosonization of }$R$\emph{\ by }$H$ and denoted by $R\#H$. Elements of $B$ may be written $r \#h$ instead of $r \otimes h$ to emphasize that we are working in the bosonization.
\end{definition}

 \begin{remark}\label{rem:quasi}
  Let $A:= R \#H, B:= S \#L$ where $H,L$ are cosemisimple dual quasi-bialgebras and $R,S$ are bialgebras
 in the categories of Yetter-Drinfeld modules over $H$ and $L$ respectively such that $A_0 = \Bbbk \#H$ and
 $B_0 = \Bbbk \# L$. Then if $A$ and $B$ are quasi-isomorphic, so are $H$ and $L$.

 \par  For suppose  that
 there is an isomorphism $\varphi: A \rightarrow B^v$ of dual quasi-bialgebras. Since $\varphi$ is a coalgebra isomorphism,
 $\varphi(A_0) = (B^v)_0 = B_0$.

 Write  $\varphi(1\otimes h)$ as $1\otimes\varphi'(h)$ for some $\varphi'(h)\in L$. In this way we get the following commutative diagram.
\begin{equation*}
\xymatrixrowsep{.5cm}\xymatrix{H\ar[r]^-{\varphi'}\ar[d]_{\sigma_H}&L^{v(\sigma_L\otimes\sigma_L)}\ar[d]^{\sigma_L}\\
 R\#H\ar[r]^-{\varphi}&(S \#L)^v}
\end{equation*} By the same argument using $\varphi^{-1}$ we get an inverse for $\varphi'$. By Proposition \ref{pro:omegav}, the right-hand side vertical map is an injective morphism of dual quasi-bialgebras. Since  $\sigma_H$ and $\varphi$ are also morphisms of dual quasi-bialgebras we get that  $\varphi'$ also is. Thus $\varphi'$ is an isomorphism of dual quasi-bialgebras as required.
\end{remark}

The proof of the next lemma is straightforward and so is left to the reader.

\begin{lemma}\label{lm: pi}
Take the hypothesis and notations of Theorem \ref{teo:RsmashH}.
Let $\pi :R\#H\rightarrow H$ be defined by $\pi \left( r \# h\right)
=\varepsilon _{R}\left( r\right) h.$ Then $\pi$ is a morphism of dual quasi-bialgebras and
\begin{equation}
\pi \left( \left( r\# h\right) _{1}\right) \otimes \left( r\#
h\right) _{2}\# \pi \left( \left( r\# h\right) _{3}\right)
=r_{-1}h_{1}\otimes \left( r_{0}\# h_{2}\right) \otimes h_{3}.
\label{form:piIdpi}
\end{equation}
\end{lemma}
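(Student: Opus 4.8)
The plan is to verify the two assertions of Lemma \ref{lm: pi} directly from the formulas in Theorem \ref{teo:RsmashH}. For the first assertion, that $\pi(r\#h)=\varepsilon_R(r)h$ is a morphism of dual quasi-bialgebras, I would check in turn that $\pi$ is a coalgebra map, that it respects multiplication and unit, and that it is compatible with the reassociators. The counit and unit compatibilities are immediate: $\varepsilon_H\circ\pi=\varepsilon_B$ and $\pi\circ u_B=u_H$ follow at once from $\varepsilon_B(r\#h)=\varepsilon_R(r)\varepsilon_H(h)$, $u_B(k)=k1_R\#1_H$, and $\varepsilon_R(1_R)=1$. The comultiplicativity $\Delta_H\circ\pi=(\pi\otimes\pi)\circ\Delta_B$ is where the formula for $\Delta_B$ enters: applying $\pi\otimes\pi$ to
\begin{equation*}
\Delta_B(r\#h)=\omega_H^{-1}(r_{-1}^1\otimes r_{-2}^2\otimes h_1)\,r_0^1\#r_{-1}^2h_2\otimes r_0^2\#h_3,
\end{equation*}
one uses $\varepsilon_R(r_0^1)\varepsilon_R(r_0^2)$ together with the counit axiom for $\Delta_R$ and the fact that $\rho_R$ is counital to collapse the coaction legs, the normalization of $\omega_H$ absorbing the cocycle factor, leaving $h_1\otimes h_2=\Delta_H(h)$ after reindexing by $\varepsilon_R(r)$.

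For multiplicativity $\pi\circ m_B=m_H\circ(\pi\otimes\pi)$, the key point is that applying $\pi$ to $m_B$ forces $\varepsilon_R$ on the $R$-component $r_0\cdot_R(h_3\vartriangleright s_0)_0$. Here I would use that $\varepsilon_R$ is an algebra map in $\yd$, so $\varepsilon_R(r_0\cdot_R(h_3\vartriangleright s_0)_0)=\varepsilon_R(r_0)\varepsilon_R((h_3\vartriangleright s_0)_0)$, and then that the action $\vartriangleright$ and coaction $\rho_R$ are counital, i.e.\ $\varepsilon_R(h\vartriangleright s)=\varepsilon_H(h)\varepsilon_R(s)$ and $\varepsilon_R(s_0)\otimes\cdots$ collapses the coaction. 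Once all the $R$-legs are replaced by counits, every occurrence of $r_{-i}$, $s_{-i}$, and the internal coactions reduces via $\varepsilon_H$ on grouplike-type legs, and the three surviving $\omega_H^{\pm1}$ factors cancel by normalization \eqref{eq:quasi-unitairitycocycle}, leaving $\varepsilon_R(r)\varepsilon_R(s)\,h_6k_5=\varepsilon_R(r)\varepsilon_R(s)\,m_H(h\otimes k)$ after counting the $H$-legs. The reassociator compatibility $\omega_B=\omega_H\circ\pi^{\otimes3}$ is immediate from the displayed formula for $\omega_B$, since $\omega_B((r\#h)\otimes(s\#k)\otimes(t\#l))=\varepsilon_R(r)\varepsilon_R(s)\varepsilon_R(t)\omega_H(h\otimes k\otimes l)$ is exactly $\omega_H(\pi\otimes\pi\otimes\pi)$ by definition of $\pi$.

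For the identity \eqref{form:piIdpi}, I would compute $(\pi\otimes B\otimes\pi)\circ(\Delta_B\otimes B)\circ\Delta_B$, or equivalently apply $\pi$ to the outer two tensorands of $(\Delta_B\otimes\mathrm{id})\Delta_B(r\#h)$. Starting from the iterated coproduct and applying $\pi$ to the first and third factors, the $R$-components of those factors get hit by $\varepsilon_R$; the same counitality and $\omega_H$-normalization arguments as above collapse the first factor to $r_{-1}h_1$ (the coaction leg surviving because $\pi$ keeps the $H$-part $r_{-1}^2 h_2$ type term) and the third factor to $h_3$, while the middle factor retains its full $R\#H$ structure as $r_0\#h_2$. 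The main obstacle throughout is bookkeeping: the coproduct formula has nested coactions $r_{-1}^1, r_{-2}^2$ and several Sweedler indices, so the real work is carefully tracking which legs survive after $\varepsilon_R$ is applied and confirming that exactly the right coaction leg $r_{-1}$ is promoted to the $H$-slot while the cocycle factors disappear by normalization. These are all routine applications of counitality of $\rho_R$, $\vartriangleright$, $\varepsilon_R$, and the normalization of $\omega_H$, which is why the statement can reasonably be left to the reader.
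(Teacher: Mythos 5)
Your proposal is correct and takes exactly the approach the paper intends: the paper gives no proof at all, stating only that it is ``straightforward and so is left to the reader,'' and your direct verification from the structure formulas of Theorem \ref{teo:RsmashH} is that intended argument. One point worth stating precisely: what collapses the coaction legs and trivializes the $\omega_H^{\pm 1}$ factors is the $H$-colinearity of $\varepsilon_R$, namely $r_{-1}\varepsilon_R(r_0)=\varepsilon_R(r)1_H$ (which holds because $\varepsilon_R$, being the counit of a bialgebra in ${}_{H}^{H}\mathcal{YD}$, is a morphism in that category), rather than counitality of $\rho_R$ as you phrase it; with that substitution every step you describe goes through.
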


\vspace{1mm}
\begin{remark} \label{rem: with a projection} Note that for $R \# H$ as above,
the map $\sigma: H \hookrightarrow R \# H$ defined by $\sigma(h) = 1_R \# h$
is also a morphism of dual quasi-bialgebras and, for $\pi$ as defined in Lemma \ref{lm: pi},
$\pi \sigma = Id_H$.   Corollary \ref{coro:subgauge}  then implies that $H$
 is quasi-isomorphic
 to an ordinary bialgebra if and only if
$R \# H$  is.
\end{remark}

\vspace{2mm}

\subsection{Quasi-Yetter-Drinfeld data}\label{sec: qyd data}
Let $(H,\omega)$ be a dual quasi-bialgebra.  In this subsection we study one-dimensional vector spaces in $_H^H\mathcal{YD}$ and
the bialgebras in $_H^H\mathcal{YD}$ generated by these.

\begin{proposition} \label{lem: Kx}  Let $(H, \omega)$ be a dual quasi-bialgebra and let $V$ be a one-dimensional vector space.  Then $V$ is an object
in ${_{H}^{H}\mathcal{YD}}$ if and only if for all $v \in V$, $h,l \in H$,
\begin{itemize}
\item[(i)] $ V \in   {^{H}\mathfrak{M}}$   and  $\rho(v) = g \otimes v $  for some  $ g \in G(H)$;
\item[(ii)]  \label{eq: YD datum 2}   There is a unitary map $ \chi \in H^\ast $ such that  for $g$ the grouplike in $(i)$,
\begin{equation}
\chi \left( hl\right) =\omega ^{-1}\left( h_{1}\otimes l_{1}\otimes g\right)
\chi \left( l_{2}\right) \omega \left( h_{2}\otimes g\otimes l_{3}\right)
\chi \left( h_{3}\right) \omega ^{-1}\left( g\otimes h_{4}\otimes
l_{4}\right) , \label{form:chiProd}
\end{equation}
\item[(iii)] For $g$ the grouplike from part $(i)$, \begin{equation}
g \chi(h_1)h_2 = h_1 \chi(h_2) g. \label{form: YDdatum}
\end{equation}
\end{itemize}
\end{proposition}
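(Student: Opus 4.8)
The plan is to exploit the one-dimensionality of $V$ to reduce each defining axiom of $\yd$ to one of the three stated conditions, proving both implications at once. First I would fix a nonzero $v\in V$, so that $V=\Bbbk v$. Under the identification $H\otimes V\cong H$, any left $H$-coaction on $V$ has the form $\rho(v)=x\otimes v$ for a unique $x\in H$, and any $\Bbbk$-linear map $\mu\colon H\otimes V\rightarrow V$ has the form $h\vartriangleright v=\chi(h)v$ for a unique $\chi\in H^{\ast}$. Thus specifying a candidate object structure on $V$ amounts to choosing a pair $(x,\chi)\in H\times H^{\ast}$, and the task is to read off what the $\yd$-axioms demand of this pair.

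Next I would translate the comodule axioms. The counit axiom $(\varepsilon\otimes V)\rho(v)=v$ forces $\varepsilon(x)=1$, while left coassociativity $(\Delta\otimes V)\rho(v)=(H\otimes\rho)\rho(v)$ forces $\Delta(x)=x\otimes x$; together these say precisely that $x=g\in G(H)$, which is condition (i). The unit axiom \eqref{unitYD} reads $\chi(1_{H})v=v$, i.e. $\chi$ is unitary, which is the unitarity requirement built into (ii).

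The heart of the matter is the associativity axiom \eqref{ass YD}. Substituting $\rho(v)=g\otimes v$ and $h\vartriangleright v=\chi(h)v$, and using that $g$ is grouplike — so the coaction of any scalar multiple of $v$ always returns $g$ in the $H$-slot while the scalar rides along with $v$ — the iterated coactions appearing in \eqref{ass YD} collapse: one finds $(l_{2}\vartriangleright v_{0})_{-1}=g$ and $(h_{3}\vartriangleright(l_{2}\vartriangleright v_{0})_{0})_{-1}=g$, with the scalars $\chi(l_{2})$ and $\chi(h_{3})$ factoring out. Comparing the coefficient of $v$ on the two sides then yields exactly \eqref{form:chiProd}, so \eqref{ass YD} is equivalent to the multiplicativity relation of (ii). Finally, after the same substitutions the compatibility axiom \eqref{Comp YD} becomes $\chi(h_{1})\,gh_{2}\otimes v=\chi(h_{2})\,h_{1}g\otimes v$; cancelling the nonzero $v$ gives \eqref{form: YDdatum}, i.e. condition (iii).

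Since each of \eqref{ass YD}, \eqref{unitYD}, \eqref{Comp YD} and the two comodule axioms has been shown, for the forced forms of $\rho$ and $\vartriangleright$, to be equivalent to the corresponding piece of (i)--(iii), both directions of the equivalence follow simultaneously. The only genuine obstacle is the bookkeeping in \eqref{ass YD}: tracking which Sweedler component of $h$ and $l$ lands in each $\omega^{\pm1}$ factor and checking that the grouplike collapse reproduces \eqref{form:chiProd} verbatim rather than a relation differing by a permutation of arguments. This is routine but error-prone, and is the step I would write out most carefully.
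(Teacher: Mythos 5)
Your proposal is correct and follows essentially the same route as the paper's own (much terser) proof: identify the coaction with a grouplike $g$ and the action with a unitary $\chi\in H^{\ast}$, then observe that \eqref{ass YD}, \eqref{unitYD}, and \eqref{Comp YD} translate, respectively, into \eqref{form:chiProd}, unitarity, and \eqref{form: YDdatum}, with both directions following from these equivalences. Your write-up is in fact more detailed than the paper's, which omits the comodule-axiom derivation of $g\in G(H)$ and the explicit collapse of the iterated coactions that you correctly flag as the only error-prone step.
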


\begin{proof}
First let $V \in {_{H}^{H}\mathcal{YD}}$.  Since $V$ is one-dimensional, there is a grouplike element $g \in H$ such that
$\rho(v) = g \otimes v$ for all $v \in V$, and also there is a map $\chi: H \rightarrow \Bbbk$ such that
 $h\rhd v = \chi(h)v$. Equation (\ref{ass YD}) of the definition of Yetter-Drinfeld modules now translates to
  (\ref{form:chiProd}) and equation (\ref{unitYD}) implies that $\chi$ is unitary.  Finally here equation (\ref{Comp YD}) of
  Definition \ref{def: YD} is equivalent to (\ref{form: YDdatum}) above.

\par Now suppose that $V$ is a one-dimensional vector space satisfying $(i)$ to $(iii)$ above. Then it is easy to see that $V$
 with coaction given by $\rho(v) = g \otimes v$ and action given by $h \rhd v
= \chi(h) v$ for all $v \in V$ is an object in $  {_{H}^{H}\mathcal{YD}}$.\end{proof}

\begin{definitions}
Let $(H, \omega)$ be a dual quasi-bialgebra.
For $g \in G(H)$ and $\chi \in H^\ast$, $\chi$ unitary,  the triple
  $((H,\omega),g,\chi)$ is called a
 \emph{%
quasi-Yetter-Drinfeld datum}, abbreviated to quasi-$YD$ datum,
whenever equations (\ref{form:chiProd}) and (\ref{form: YDdatum}) above hold.
If $q:=\chi \left( g\right) ,$  we also say that $\left(
(H,\omega),g,\chi \right) $ is a \emph{quasi-Yetter-Drinfeld datum for }$q.$
%Whenever needed to specify $\omega $, we will write $\left( \left( H,\omega
%\right) ,g,\chi \right) $ instead of $\left( H,g,\chi \right) .$
\end{definitions}

\begin{remark}\label{rem: YD datum}
(i) When $H$ is a Hopf algebra, $\omega $ is trivial and $q\neq 1$, then the previous definition reduces to \cite[Definition 2.1]{CDMM-QuantumLines}.

\par(ii) Note that Proposition \ref{lem: Kx}, roughly speaking,  says that a one-dimensional vector space $V$
 with action and coaction
 defined by $\chi$ and $g$, is an object in
${_{H}^{H}\mathcal{YD}}$ if and only if $((H,\omega),g,\chi)$ is a quasi-$YD$ datum.

\par(iii) Equation (\ref{form: YDdatum}) implies that if $((H,\omega),g,\chi)$ is a quasi-$YD$ datum
and $\ell \in G(H)$ with $\chi_H(\ell) \neq 0$, then $g \ell = \ell g$.
\end{remark}

\begin{lemma}
\label{lem:group}Let $G$ be a group and let $\omega $ be a normalized $3$%
-cocycle on $G.$ Let $g\in G$ and $\chi :\Bbbk G\rightarrow \Bbbk .$ The
following are equivalent.

\begin{itemize}
\item[$\left( i\right) $] $\left( \left( \Bbbk G,\omega \right) ,g,\chi
\right) $ is a quasi-$YD$ datum.

\item[$\left( ii\right) $] $g\in Z\left( G\right) $, $\chi $ is unitary and
(\ref{form:chiProd}) holds for all $h,\ell \in G$.
\end{itemize}
\end{lemma}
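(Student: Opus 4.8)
The plan is to reduce everything to the basis of group elements and to exploit the fact that on grouplike triples the reassociator takes nonzero scalar values. Recall that the grouplikes of $\Bbbk G$ are exactly the elements of $G$, so the requirement $g\in G(\Bbbk G)$ built into the definition of a quasi-$YD$ datum is automatic. Since unitarity of $\chi$, equation (\ref{form:chiProd}) and equation (\ref{form: YDdatum}) are all $\Bbbk$-linear conditions and $G$ is a basis of $\Bbbk G$, it suffices to test them on group elements, where $\Delta(h)=h\otimes h$ collapses every Sweedler component. Thus the only content of $(i)$ beyond $(ii)$ is the datum equation (\ref{form: YDdatum}), while $(ii)$ instead carries the hypothesis $g\in Z(G)$; so I would prove the equivalence by showing that, granted $\chi$ unitary and (\ref{form:chiProd}), equation (\ref{form: YDdatum}) holds if and only if $g$ is central.

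First I would record that for $a,b,c\in G$ the value $\omega(a\otimes b\otimes c)$ is a nonzero scalar with $\omega^{-1}(a\otimes b\otimes c)=\omega(a\otimes b\otimes c)^{-1}$: since $a\otimes b\otimes c$ is grouplike in $\Bbbk G^{\otimes 3}$, the convolution of $\omega$ with its inverse evaluates to the product of their values, which must equal $\varepsilon(a)\varepsilon(b)\varepsilon(c)=1$. Evaluating (\ref{form:chiProd}) on $h,\ell\in G$ then gives $\chi(h\ell)=\chi(h)\chi(\ell)\beta(h,\ell)$, where $\beta(h,\ell):=\omega^{-1}(h\otimes\ell\otimes g)\,\omega(h\otimes g\otimes\ell)\,\omega^{-1}(g\otimes h\otimes\ell)$ is a nonzero scalar.

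The key step, and the place where full centrality of $g$ (rather than mere commutation with the support of $\chi$) comes from, is to observe that $\chi$ cannot vanish anywhere on $G$. Indeed, putting $\ell=h^{-1}$ in the displayed identity and using $\chi(1_H)=1$ (equivalently (\ref{unitYD})) gives $1=\chi(h)\chi(h^{-1})\beta(h,h^{-1})$ with $\beta(h,h^{-1})\neq 0$, forcing $\chi(h)\neq 0$ for every $h\in G$. On the other hand, evaluating (\ref{form: YDdatum}) on a group element $h$ reads $\chi(h)\,gh=\chi(h)\,hg$ in $\Bbbk G$, i.e. $\chi(h)(gh-hg)=0$; since $\chi(h)\neq 0$ and $gh,hg$ are basis elements, this is equivalent to $gh=hg$. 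Hence, under $(ii)$'s standing hypotheses, (\ref{form: YDdatum}) holds for all $h$ precisely when $g$ commutes with every element of $G$, that is, when $g\in Z(G)$.

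Assembling these gives both implications. For $(i)\Rightarrow(ii)$: a quasi-$YD$ datum supplies $\chi$ unitary together with (\ref{form:chiProd}) and (\ref{form: YDdatum}); the first two force $\chi$ nowhere zero, and then (\ref{form: YDdatum}) forces $g\in Z(G)$. For $(ii)\Rightarrow(i)$: $g\in Z(G)$ makes $gh=hg$ for all $h$, so (\ref{form: YDdatum}) holds by linearity, and together with the assumed unitarity of $\chi$, equation (\ref{form:chiProd}), and the automatic $g\in G(\Bbbk G)$ we recover every defining condition of a quasi-$YD$ datum. The one subtle point, which I expect to be the main obstacle, is precisely the non-vanishing of $\chi$: without it, (\ref{form: YDdatum}) would only say that $g$ centralizes the support of $\chi$, and the upgrade to genuine centrality would break down.
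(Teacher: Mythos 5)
Your proof is correct and follows essentially the same route as the paper's: both reduce the lemma to showing that $(i)$ forces $g\in Z(G)$, obtain the non-vanishing of $\chi$ on $G$ by evaluating (\ref{form:chiProd}) at a group element and its inverse together with $\chi(1)=1$, and then read off $gh=hg$ from (\ref{form: YDdatum}) (the paper cites Remark \ref{rem: YD datum}(iii) for this last step, which you prove directly). The additional details you spell out---the reduction to basis elements by linearity, the invertibility of $\omega$ on grouplike triples, and the easy implication $(ii)\Rightarrow(i)$---are exactly what the paper leaves implicit.
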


\begin{proof}
It suffices to prove that  $\left( i\right)  $ implies that $g \in Z(G)$. Let $%
h\in G.$ Then
\begin{equation*}
1=\chi \left( 1\right) =\chi \left( h^{-1}h\right) \overset{(\ref%
{form:chiProd})}{=}\omega ^{-1}\left( h^{-1}\otimes h\otimes g\right) \chi
\left( h\right) \omega \left( h^{-1}\otimes g\otimes h\right) \chi \left(
h^{-1}\right) \omega ^{-1}\left( g\otimes h^{-1}\otimes h\right),
\end{equation*}%
so that $\chi \left( h\right) $ is invertible, and  $gh = hg$ by Remark \ref{rem: YD datum}.
\end{proof}

\vspace{2mm}

\begin{definition}\label{def: morphism YD data}
For $\left( \left( H,\omega \right) ,g,\chi \right) $ and $\left( \left(
L,\alpha \right) ,l,\xi \right) $   quasi-$YD$ data,
a dual quasi-bialgebra homomorphism $
\varphi :\left( H,\omega \right) \rightarrow \left( L,\alpha \right) $ such
that $\varphi \left( g\right) =l$ and $\xi   \varphi =\chi  $  is called a
\textit{morphism of quasi-$YD$ data}.
\end{definition}

\begin{lemma}
\label{lem:qYDquotient}Let $\pi
:(A,\omega_A)\rightarrow (H,\omega_H)$ be a morphism of dual quasi-bialgebras and $\left(
(H,\omega),g,\chi \right) $   a quasi-$YD$ datum. If there exists $%
a\in G\left( A\right) $ such that $\pi \left( a\right) =g$ and $a\chi \pi
\left( b_{1}\right) b_{2}=b_{1}\chi \pi \left( b_{2}\right) a,$ for every $%
b\in A$, then $\left( (A,\omega_A),a,\chi_A:=\chi   \pi \right) $ is also a
quasi-$YD$ datum and $\pi$ is a morphism of quasi-$YD$ data.
\end{lemma}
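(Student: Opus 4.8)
The plan is to verify directly that the triple $((A,\omega_A),a,\chi_A)$ satisfies the two equations (\ref{form:chiProd}) and (\ref{form: YDdatum}) defining a quasi-$YD$ datum, and then to read off from Definition \ref{def: morphism YD data} that $\pi$ is a morphism of such data. First I would record what it means for $\pi$ to be a morphism of dual quasi-bialgebras: it is a morphism of coalgebras with multiplication and unit, so $\pi(1_A)=1_H$, $\pi m_A=m_H(\pi\otimes\pi)$, $\pi(b)_1\otimes\pi(b)_2=\pi(b_1)\otimes\pi(b_2)$, and $\omega_H\circ\pi^{\otimes 3}=\omega_A$. Because $\pi^{\otimes 3}\colon A^{\otimes 3}\to H^{\otimes 3}$ is a coalgebra map, precomposition with it is an algebra homomorphism between the associated convolution algebras, so it also sends $\omega_H^{-1}$ to $\omega_A^{-1}$; this yields $\omega_A^{-1}=\omega_H^{-1}\circ\pi^{\otimes 3}$. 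Unitarity of $\chi_A=\chi\pi$ then follows at once, since $\chi_A(1_A)=\chi(\pi(1_A))=\chi(1_H)=1$.

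The equation (\ref{form: YDdatum}) for $((A,\omega_A),a,\chi_A)$ asserts $a\,\chi_A(b_1)\,b_2=b_1\,\chi_A(b_2)\,a$ for all $b\in A$, and since $\chi_A=\chi\pi$ this is precisely the hypothesis; nothing more is needed. The substantive step is (\ref{form:chiProd}). For this I would begin with $\chi_A(bc)=\chi(\pi(b)\pi(c))$ by multiplicativity of $\pi$, apply (\ref{form:chiProd}) for the datum $((H,\omega_H),g,\chi)$ to the elements $\pi(b),\pi(c)\in H$, and then pull the resulting expression back to $A$. Using that $\pi$ is a coalgebra map to replace each $\pi(b)_i$ by $\pi(b_i)$ (and likewise for $c$), together with $g=\pi(a)$ and $\omega_H^{\pm 1}\circ\pi^{\otimes 3}=\omega_A^{\pm 1}$ to convert each $\omega_H^{\pm 1}$-factor into the corresponding $\omega_A^{\pm 1}$-factor with $a$ in place of $g$, the right-hand side becomes exactly that of (\ref{form:chiProd}) for $((A,\omega_A),a,\chi_A)$.

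It then remains to observe that $\pi$ is a morphism of quasi-$YD$ data in the sense of Definition \ref{def: morphism YD data}: it is a morphism of dual quasi-bialgebras by hypothesis, it satisfies $\pi(a)=g$ by hypothesis, and $\chi\pi=\chi_A$ holds by the very definition of $\chi_A$. The only point requiring any care is the index bookkeeping in (\ref{form:chiProd})---matching the Sweedler components on $A$ with those produced on $H$ after applying $\pi$---together with the identity $\omega_A^{-1}=\omega_H^{-1}\circ\pi^{\otimes 3}$ noted above; beyond this the verification is entirely routine.
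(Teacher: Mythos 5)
Your proposal is correct and follows essentially the same route as the paper: the only substantive point is verifying (\ref{form:chiProd}) for $((A,\omega_A),a,\chi_A)$, which you do by pushing the computation through $\pi$ using $\omega_A^{\pm 1}=\omega_H^{\pm 1}\circ\pi^{\otimes 3}$, $\pi(a)=g$, and the coalgebra-map property of $\pi$ — exactly the paper's computation, merely read in the opposite direction. Your extra remarks (unitarity of $\chi_A$, the convolution-algebra argument for $\omega_A^{-1}=\omega_H^{-1}\circ\pi^{\otimes 3}$, and the observation that (\ref{form: YDdatum}) is literally the hypothesis) are correct details the paper leaves implicit.
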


\begin{proof}
We need only verify (\ref{form:chiProd}) for $((A, \omega),a, \chi_A)$.  For $h,l\in A$, since (\ref{form:chiProd})
holds for $\chi$, we have:
\begin{eqnarray*}
&&\omega _{A}^{-1}\left( h_{1}\otimes l_{1}\otimes a\right) \chi _{A}\left(
l_{2}\right) \omega _{A}\left( h_{2}\otimes a\otimes l_{3}\right) \chi
_{A}\left( h_{3}\right) \omega _{A}^{-1}\left( a\otimes h_{4}\otimes
l_{4}\right)
 \\
%&=&\omega _{H}^{-1}\left( \pi \otimes \pi \otimes \pi \right) \left(
%h_{1}\otimes l_{1}\otimes a\right) \chi \pi \left( l_{2}\right) \omega
%_{H}\left( \pi \otimes \pi \otimes \pi \right) \left( h_{2}\otimes a\otimes
%l_{3}\right) \chi \pi \left( h_{3}\right) \omega _{H}^{-1}\left( \pi \otimes
%\pi \otimes \pi \right) \left( a\otimes h_{4}\otimes l_{4}\right)
% \\
&=&\omega _{H}^{-1}\left( \pi \left( h\right) _{1}\otimes \pi \left(
l\right) _{1}\otimes g\right) \chi \left[ \pi \left( l\right) _{2}\right]
\omega _{H}\left( \pi \left( h\right) _{2}\otimes g\otimes \pi \left(
l\right) _{3}\right) \chi \left[ \pi \left( h\right) _{3}\right] \omega
_{H}^{-1}\left( g\otimes \pi \left( h\right) _{4}\otimes \pi \left( l\right)
_{4}\right)
\\
&\overset{(\ref{form:chiProd}) } {=}& \chi \left( \pi \left( h\right) \pi
\left( l\right) \right)
 = \chi \pi \left( hl\right) =\chi _{A}\left( hl\right) .
\end{eqnarray*}
\end{proof}

\begin{lemma}
\label{lem:ChiCyclic} Suppose $\left(
H,\omega \right) $ is a dual quasi-bialgebra
and  $\left(( H,\omega) ,g,\chi \right) $ is a
quasi-$YD$ datum.
Then  for   $c\in G(H)$   and $1 \leq t  $,
\begin{eqnarray}
&&\chi \left( c^{t}\right) =\chi \left( c\right) ^{t}\prod\limits_{0\leq i\leq
t-1}\left[ \omega ^{-1}\left( c^{i}\otimes c\otimes g\right) \omega \left(
c^{i}\otimes g\otimes c\right) \omega ^{-1}\left( g\otimes c^{i}\otimes
c\right) \right] ,  \label{form:datum-1}
\\
&&  \text{and, in particular,} \nonumber
  \\
&&\chi \left( g^{t}\right) =\chi \left( g\right) ^{t}\prod\limits_{0\leq i\leq
t-1}\omega ^{-1}\left( g\otimes g^{i}\otimes g\right).  \label{form:datum0}
\end{eqnarray}
\end{lemma}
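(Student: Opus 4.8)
The plan is to prove (\ref{form:datum-1}) by induction on $t$ and then read off (\ref{form:datum0}) as the special case $c=g$. The crucial preliminary observation is that, since $c\in G(H)$ and multiplication on the dual quasi-subbialgebra $\Bbbk G(H)$ is associative (as noted after the definition of dual quasi-subbialgebra), every power $c^{i}$ is again a grouplike element, with $c^{0}=1_{H}$. Consequently each coproduct is trivial, $\Delta(c^{i})=c^{i}\otimes c^{i}$, and this is exactly what makes the general multiplicativity rule (\ref{form:chiProd}) collapse into a clean scalar recursion when applied to powers of $c$.

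For the base case $t=1$ the product on the right of (\ref{form:datum-1}) ranges only over $i=0$, and its three factors are $\omega^{-1}(1_{H}\otimes c\otimes g)$, $\omega(1_{H}\otimes g\otimes c)$ and $\omega^{-1}(g\otimes 1_{H}\otimes c)$, each equal to $1$ by the normalization (\ref{eq:quasi-unitairitycocycle}) of $\omega$; since $\chi(c)^{1}=\chi(c)$, the identity holds. (Alternatively one may start from $t=0$, using $\chi(1_{H})=1$ from the unitarity of $\chi$ together with the empty-product convention.)

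For the inductive step I would write $c^{t}=c^{t-1}c$ and apply (\ref{form:chiProd}) with $h=c^{t-1}$ and $l=c$. Because $c^{t-1}$ and $c$ are grouplike, all the legs $h_{1},h_{2},h_{3},h_{4}$ equal $c^{t-1}$ and all the legs $l_{1},l_{2},l_{3},l_{4}$ equal $c$, so the right-hand side reduces to
\[
\chi(c^{t})=\omega^{-1}(c^{t-1}\otimes c\otimes g)\,\chi(c)\,\omega(c^{t-1}\otimes g\otimes c)\,\chi(c^{t-1})\,\omega^{-1}(g\otimes c^{t-1}\otimes c).
\]
Substituting the inductive hypothesis for $\chi(c^{t-1})$ and observing that the three newly introduced $\omega$-factors are precisely the $i=t-1$ term of the product in (\ref{form:datum-1}) completes the induction.

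Finally, (\ref{form:datum0}) follows by setting $c=g$ in (\ref{form:datum-1}): in each term of the product the first two factors become $\omega^{-1}(g^{i}\otimes g\otimes g)\,\omega(g^{i}\otimes g\otimes g)=1$, leaving only $\omega^{-1}(g\otimes g^{i}\otimes g)$, which is the asserted formula. The whole argument is essentially bookkeeping; the one point genuinely requiring care is the justification that powers of $c$ remain grouplike, so that (\ref{form:chiProd}) simplifies as above, and this is exactly the associativity of $\Bbbk G(H)$. I do not anticipate any substantive obstacle beyond this.
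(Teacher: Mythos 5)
Your proof is correct and follows essentially the same route as the paper's: induction on $t$, applying (\ref{form:chiProd}) with $h=c^{t-1}$, $l=c$ so that grouplikeness collapses all coproduct legs, then absorbing the three new $\omega$-factors as the $i=t-1$ term of the product. Your added care about why powers of $c$ are grouplike (associativity of $\Bbbk G(H)$), why the base case reduces to normalization of $\omega$, and why setting $c=g$ yields (\ref{form:datum0}) merely makes explicit points the paper leaves implicit.
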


\begin{proof}
 Let $s>1$ and then
by (\ref{form:chiProd})  we have%
\begin{equation}
\chi \left( c^{s-1}c\right) =\omega ^{-1}\left( c^{s-1}\otimes c\otimes g\right)
\chi \left( c\right) \omega \left(c^{s-1}\otimes g\otimes c\right) \chi
\left( c^{s-1}\right) \omega ^{-1}\left( g\otimes c^{s-1}\otimes c\right). \label{form: induction step}
\end{equation}%
Equation (\ref{form:datum-1}) now follows  by induction on $t\geq 1$. For $%
t=1$, there is nothing to prove. Let $t>1$ and assume that the statement
holds for $t-1.$ Then by (\ref{form: induction step}),%
\begin{equation*}
\chi \left( c^{t}\right) =  \chi \left( c^{t-1}\right) \left[ \chi \left( c\right)\omega ^{-1}\left(
c^{t-1}\otimes c\otimes g\right) \omega \left( c^{t-1}\otimes g\otimes
c\right) \omega ^{-1}\left( g\otimes c^{t-1}\otimes c\right) \right]
\end{equation*}
and if we then expand $\chi(c^{t-1})$ using (\ref{form:datum-1}), the result is immediate.
\end{proof}

\begin{remark} \label{rem: datum short}
If $\omega = \omega  ( H \otimes \tau)$,
then equation (\ref{form:datum-1})  simplifies to:
\begin{equation}
\chi \left( c^{t}\right) =\chi \left( c\right) ^{t}\prod\limits_{0\leq i\leq
t-1}  \omega ^{-1}\left( g\otimes c^{i}\otimes
c\right).  \label{form:datum-1 short}
\end{equation}

\section{Quantum lines}

Our first lemma will be useful in the computations to follow.
\begin{lemma}
\label{lem:omegatrick} Let $( H, \omega) $ be a dual quasi bialgebra and let $g\in G(H) $.  For all $ 0 \leq a,b,c, $
\begin{equation}
     \omega ^{-1}\left( g^{a}\otimes g^{b}\otimes g^{c}\right)
 =\prod\limits_{ 0\leq j\leq a-1}\omega ^{-1}\left( g\otimes g^{j+b }\otimes
g^{c}\right)  \omega ^{-1}\left( g\otimes
g^{j }\otimes g^{b}\right)  \omega \left(
g\otimes g^{j }\otimes g^{b+c}\right).   \label{form:omegainduct}
\end{equation}

\end{lemma}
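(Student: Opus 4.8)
The plan is to reduce the abstract $3$-cocycle condition to a scalar identity among grouplikes and then induct on $a$. Since $g\in G(H)$, every power $g^{i}$ lies in the subcoalgebra $\Bbbk G(H)$, whose multiplication is associative by the remark following the definition of dual quasi-subbialgebra, and each $g^{i}$ is grouplike with $\Delta(g^{i})=g^{i}\otimes g^{i}$ and $\varepsilon(g^{i})=1$. Consequently, for maps $f_{1},f_{2},f_{3}:H^{\otimes 4}\rightarrow\Bbbk$ the convolution $f_{1}\ast f_{2}\ast f_{3}$ evaluated on a tensor of four grouplikes is just the ordinary product of the three scalar values, and the convolution inverse $\omega^{-1}$ agrees on such tensors with the pointwise multiplicative inverse of $\omega$.

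First I would evaluate the cocycle identity (\ref{eq:3-cocycle}) on $x\otimes y\otimes z\otimes w$ with $x,y,z,w\in G(H)$. Each of the five terms collapses: for instance $(\varepsilon\otimes\omega)(x\otimes y\otimes z\otimes w)=\omega(y\otimes z\otimes w)$ and $\omega(H\otimes m\otimes H)(x\otimes y\otimes z\otimes w)=\omega(x\otimes yz\otimes w)$, and similarly for the others. This yields the classical abelian $3$-cocycle relation
\[
\omega(y\otimes z\otimes w)\,\omega(x\otimes yz\otimes w)\,\omega(x\otimes y\otimes z)=\omega(x\otimes y\otimes zw)\,\omega(xy\otimes z\otimes w).
\]
Specializing $x=g$, $y=g^{a}$, $z=g^{b}$, $w=g^{c}$ and solving for $\omega(g^{a+1}\otimes g^{b}\otimes g^{c})$, then taking inverses, produces the recursion
\[
\omega^{-1}(g^{a+1}\otimes g^{b}\otimes g^{c})=\omega^{-1}(g^{a}\otimes g^{b}\otimes g^{c})\,\omega^{-1}(g\otimes g^{a+b}\otimes g^{c})\,\omega^{-1}(g\otimes g^{a}\otimes g^{b})\,\omega(g\otimes g^{a}\otimes g^{b+c}).
\]
The three new factors here are exactly the $j=a$ term of the claimed product on the right-hand side of (\ref{form:omegainduct}).

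Finally I would induct on $a$, with $b,c$ fixed. The base case $a=0$ holds because $\omega^{-1}(1\otimes g^{b}\otimes g^{c})=1$ by normalization (\ref{eq:quasi-unitairitycocycle}) and the empty product is $1$ by our convention; the inductive step is immediate by appending the $j=a$ factor supplied by the recursion to the product given by the inductive hypothesis for $a$. There is no genuine obstacle: the only point requiring care is bookkeeping the argument shifts, namely that the second argument of the first factor is $g^{j+b}$ rather than $g^{j}$, so that the recursion's factor $\omega^{-1}(g\otimes g^{a+b}\otimes g^{c})$ matches the $j=a$ instance and the product telescopes correctly.
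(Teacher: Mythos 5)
Your proof is correct and takes essentially the same route as the paper: both evaluate the $3$-cocycle condition \eqref{eq:3-cocycle} on a tensor of four powers of $g$ to obtain the recursion $\omega^{-1}(g^{a+1}\otimes g^{b}\otimes g^{c})=\omega^{-1}(g^{a}\otimes g^{b}\otimes g^{c})\,\omega^{-1}(g\otimes g^{a+b}\otimes g^{c})\,\omega^{-1}(g\otimes g^{a}\otimes g^{b})\,\omega(g\otimes g^{a}\otimes g^{b+c})$, and then induct on $a$. The only cosmetic differences are that you make explicit the (correct) reduction of convolution products and inverses to pointwise ones on grouplikes, and you anchor the induction at $a=0$ via normalization rather than at $a=0,1$ as the paper does.
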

\begin{proof}
  Let $\Phi(j):=  \omega ^{-1}\left( g\otimes g^{j+b }\otimes
g^{c}\right)  \omega ^{-1}\left( g\otimes
g^{j }\otimes g^{b}\right)  \omega \left(
g\otimes g^{j }\otimes g^{b+c}\right)  $.   The proof is by induction on $a\geq 1.$
For $a=0,1$ there is nothing to
prove. Let $a>1$ and assume the formula holds for $a-1.$
By \eqref{eq:3-cocycle} evaluated
on $g\otimes g^{a-1}\otimes g^{b}\otimes g^{c}$, we have%
\begin{equation*}
\omega \left( g^{a-1}\otimes g^{b}\otimes g^{c}\right) \omega \left(
g\otimes g^{a+b-1}\otimes g^{c}\right) \omega \left( g\otimes g^{a-1}\otimes
g^{b}\right) =\omega \left( g\otimes g^{a-1}\otimes g^{b+c}\right) \omega
\left( g^{a}\otimes g^{b}\otimes g^{c}\right)
\end{equation*}%
so that
\begin{displaymath}
\omega \left( g^{a-1}\otimes g^{b}\otimes g^{c}\right)\omega^{-1}\left( g^{a}\otimes g^{b}\otimes g^{c}\right)
 = \Phi(a-1),
\end{displaymath}
and the statement then follows from the induction assumption.
%\begin{eqnarray*}
%&& \omega ^{-1}\left( g^{a}\otimes g^{b}\otimes g^{c}\right)\\ &=&\omega
%^{-1}\left( g^{a-1}\otimes g^{b}\otimes g^{c}\right) \omega ^{-1}\left(
%g\otimes g^{a+b-1}\otimes g^{c}\right) \omega ^{-1}\left( g\otimes
%g^{a-1}\otimes g^{b}\right) \omega \left( g\otimes g^{a-1}\otimes
%g^{b+c}\right) \\
%&=&\left[
%\begin{array}{c}
%\prod\limits_{1\leq j\leq a-1}\omega ^{-1}\left( g\otimes g^{j+b-1}\otimes
%g^{c}\right) \prod\limits_{0\leq s\leq a-2}\omega ^{-1}\left( g\otimes
%g^{s}\otimes g^{b}\right) \prod\limits_{0\leq t\leq a-2}\omega \left(
%g\otimes g^{t}\otimes g^{b+c}\right) \\
%\omega ^{-1}\left( g\otimes g^{a+b-1}\otimes g^{c}\right) \omega ^{-1}\left(
%g\otimes g^{a-1}\otimes g^{b}\right) \omega \left( g\otimes g^{a-1}\otimes
%g^{b+c}\right)%
%\end{array}%
%\right] \\
%&=&\prod\limits_{1\leq j\leq a}\omega ^{-1}\left( g\otimes g^{j+b-1}\otimes
%g^{c}\right) \prod\limits_{0\leq s\leq a-1}\omega ^{-1}\left( g\otimes
%g^{s}\otimes g^{b}\right) \prod\limits_{0\leq t\leq a-1}\omega \left(
%g\otimes g^{t}\otimes g^{b+c}\right) .
%\end{eqnarray*}
\end{proof}

Next we introduce some useful notation.

\begin{notation} \label{not: big omega}
 Let $\left( H,\omega \right) $ be a dual quasi-bialgebra.  For $U,V,W,Z$ in  $ ^{H}_H\mathcal{YD}$, we define
 $\Omega_{U,V,W,Z}: (U \otimes V) \otimes (W \otimes Z) \rightarrow (U \otimes W) \otimes (V \otimes Z)$ by
 \begin{equation}
 \Omega_{U,V,W,Z}:= a^{-1}_{U,W,V\otimes Z} (U \otimes a_{W,V,Z})
 (U \otimes c_{V,W} \otimes Z)( U \otimes a^{-1}_{V,W,Z}) a_{U,V,W \otimes Z},
\end{equation}
 where $a:= {^Ha}$ is the associativity constraint (\ref{form: assoc constraint}) in $^H \mathfrak{M}$. If $U=V=W=Z$, we write
 $\Omega_U $ in place of $\Omega_{U,U,U,U}$.
\end{notation}

 As observed in
Remark \ref{rem:tensor}, we can consider the tensor algebra $T\left(
V\right) $ of $V$ in ${_{H}^{H}\mathcal{YD}}$ for any object $V$ in ${%
_{H}^{H}\mathcal{YD}}$. Explicitly
\begin{equation*}
T\left( V\right) :=\oplus _{n\in
%TCIMACRO{\U{2115} }%
%BeginExpansion
\mathbb{N}
%EndExpansion
}T^{n}\left( V\right) ,
\end{equation*}
where $T^{0}\left( V\right) =\Bbbk ,T^{1}\left( V\right) =V$ and, for $n>1,$
$T^{n}\left( V\right) :=V\otimes T^{n-1}\left( V\right) .$ Thus, for
instance, $T^{2}\left( V\right) =V\otimes V$ and $T^{3}\left( V\right)
=V\otimes \left( V\otimes V\right) .$ Note that the order of the brackets is
important here. \\

Let $\left(( H, \omega),g,\chi \right) $ be a quasi-$YD$ datum for  $q$.  Let $%
V=\Bbbk v$ be a one-dimensional vector space and then $(V, \rho, \mu)$ with $\rho(v) = g \otimes v$
 and $\mu(h \otimes v) = \chi(h)v$ is an object in $_H^H\mathcal{YD}$ by Remark \ref{rem: YD datum}. Set $v^{\left[ 0\right] }:=1,v^{\left[
1\right] }:=v$ and, for $n>1,$ $v^{\left[ n\right] }:=v\otimes v^{\left[ n-1%
\right] }.$ As a vector space $T\left( V\right) $ may be identified with the
polynomial ring $\Bbbk \left[ X\right] $ via the correspondence $v^{\left[ n%
\right] }\leftrightarrow X^{n} $.  However the   multiplication  is
different.

\begin{proposition} \label{prop: structure of T(V)}
With hypothesis and notations as above,
the tensor algebra $T\left( V\right) $ has basis $\left( v^{\left[ n\right]
}\right) _{n\in \mathbb{N}}$ and has the following bialgebra structure in ${_{H}^{H}\mathcal{YD}}$.
\par (i) The left coaction of $H$ on $T(V)$ is given by
\begin{equation}
\rho \left( v^{\left[ n\right] }\right) =v_{-1}^{\left[ n\right] }\otimes
v_{0}^{\left[ n\right] }=g^{n}\otimes v^{\left[ n\right] }.
\label{form: coactionTV}
\end{equation}
The left action of $H$ on $T(V)$ is given by
\begin{equation}
h\vartriangleright v^{\left[ n\right] }=\chi _{\left[ n\right] }\left(
h\right) v^{\left[ n\right] },  \label{form: actionTV}
\end{equation}%
where $\chi _{\left[ n\right] }\in H^{\ast }$ is defined iteratively by
setting $\chi_{[0]}:= \varepsilon$, and for $n \geq 1$,
\begin{equation} \label{form:chin0}
\chi _{\left[ n\right] }:=
\omega \left( -\otimes g\otimes g^{n-1}\right) \ast \chi \ast \omega
^{-1}\left( g\otimes -\otimes g^{n-1}\right) \ast \chi _{\left[ n-1\right]
}\ast \omega \left( g\otimes g^{n-1}\otimes -\right).
\end{equation}

Furthermore, for $n\geq 1$
\begin{equation}
\chi _{\left[ n\right] }=\left[ \prod\limits_{0\leq i\leq n-1}^{\ast }\omega
\left( -\otimes g\otimes g^{n-1-i}\right) \ast \chi \ast \omega ^{-1}\left(
g\otimes -\otimes g^{n-1-i}\right) \right] \ast \left[ \prod\limits_{0\leq
i\leq n-1}^{\ast }\omega \left( g\otimes g^{i}\otimes -\right) \right]
\label{form:chin}
\end{equation}
and in particular%
\begin{equation}
\chi _{\left[ n\right] }\left( g\right) =\left[ \prod\limits_{0\leq i\leq
n-1}\omega \left( g\otimes g^{i}\otimes g\right) \right] \chi \left(
g\right) ^{n} = q^n   \prod\limits_{0\leq i\leq
n-1}\omega \left( g\otimes g^{i}\otimes g\right).  \label{form: ching}
\end{equation}

\par (ii)  The algebra structure on $T(V)$ is given by $1_{T(V)} = 1_{\Bbbk} \in T^0(V)$ and

\begin{equation}
v^{\left[ a\right] }v^{\left[ b\right] }=\left[ \prod\limits_{0\leq i\leq
a-1}\omega ^{-1}\left( g\otimes g^{i}\otimes g^{b}\right) \right] v^{\left[
a+b\right] },\text{ for }  {a \geq 1}, b\in
%TCIMACRO{\U{2115} }%
%BeginExpansion
\mathbb{N}
%EndExpansion
,  \label{form: prodTV}
\end{equation}%

\par(iii) The coalgebra structure is given by
$
\varepsilon _{T}\left( v^{\left[ n\right] }\right) =\delta _{n,0},
 $
and
\begin{equation}
 \Delta _{T}\left( v^{\left[ n\right] }\right) =\sum_{0\leq i\leq n}
\beta(i,n)
v^{\left[ i\right] }\otimes v^{\left[ n-i\right] }, \label{form: coprodTV}
%&&
%= 1\otimes v^{\left[ n\right]
%}+ \left [ \sum_{1\leq i\leq n-1}\binom{n}{i}_{q}\prod\limits_{0\leq j\leq i-1}\omega
%\left( g\otimes g^{j}\otimes g^{n-i}\right) v^{\left[ i\right] }\otimes v^{%
%\left[ n-i\right] }\right ] +v^{\left[ n\right] }\otimes 1.
%\nonumber
\end{equation}

where
we define
\begin{equation}
\beta(i,n) =\binom{n}{i}_{q}\prod\limits_{0\leq j\leq
i-1}\omega \left( g\otimes g^{j}\otimes g^{n-i}\right) \text{ for all }0\leq
i\leq n. \label{form:n/i}
\end{equation}%
Note that $\beta(0,n) = 1= \beta(n,n)$ for all $n \geq 0$.

\end{proposition}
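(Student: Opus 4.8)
Every claim will be proved by induction, exploiting that $T(V)=\oplus_n T^n(V)$ with $T^n(V)=V\otimes T^{n-1}(V)$ right-normed and $\dim T^n(V)=1$, so that $\left(v^{[n]}\right)_{n\in\mathbb{N}}$ is automatically a basis. I read off each structure map from the explicit formulas recalled in the preliminaries and push them through the recursion $v^{[n]}=v\otimes v^{[n-1]}$.

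\textbf{Part (i).} The coaction \eqref{form: coactionTV} is immediate by induction: the diagonal coaction on $T^n(V)=V\otimes T^{n-1}(V)$ gives $\rho(v^{[n]})=g\cdot g^{n-1}\otimes v^{[n]}=g^{n}\otimes v^{[n]}$, using that grouplikes multiply. For the action, I apply the tensor-product action \eqref{form:YDtens} to $h\vartriangleright(v\otimes v^{[n-1]})$; since $v,v^{[n-1]}$ are coaction eigenvectors for the grouplikes $g,g^{n-1}$ and $h\vartriangleright v^{[n-1]}=\chi_{[n-1]}(h)v^{[n-1]}$ by induction, all the ``$\left(\,\cdot\,\right)_{-1}$'' legs collapse to powers of $g$ and \eqref{form:YDtens} reduces exactly to the recursion \eqref{form:chin0}. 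Unwinding \eqref{form:chin0} needs only associativity of convolution, not cocommutativity: writing $\chi_{[k]}=A_k\ast\chi_{[k-1]}\ast B_k$ with $A_k=\omega(-\otimes g\otimes g^{k-1})\ast\chi\ast\omega^{-1}(g\otimes-\otimes g^{k-1})$ and $B_k=\omega(g\otimes g^{k-1}\otimes-)$, the $A$'s stay adjacent on the left and the $B$'s on the right, giving $\chi_{[n]}=A_n\ast\cdots\ast A_1\ast B_1\ast\cdots\ast B_n$, which is \eqref{form:chin}. Finally \eqref{form: ching} follows by evaluating \eqref{form:chin} at the grouplike $g$ (so convolution becomes the pointwise product of values at $g$): in each $A_k$-factor the two $\omega$-terms cancel leaving $\chi(g)=q$, yielding $q^n\prod_{0\le i\le n-1}\omega(g\otimes g^i\otimes g)$.

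\textbf{Part (ii).} Because $T(V)$ is the free algebra in $^{H}\mathfrak{M}$, the multiplication $T^{1}(V)\otimes T^{a-1}(V)\to T^{a}(V)$ is the identity of $V\otimes T^{a-1}(V)=T^{a}(V)$, so $v\cdot v^{[k]}=v^{[k+1]}$ with no coefficient. I then induct on $a$ using the associativity axiom $m(m\otimes T)=m(T\otimes m)\,{}^{H}a$ of an algebra in the monoidal category, where the constraint \eqref{form: assoc constraint} evaluated on $v\otimes v^{[a-1]}\otimes v^{[b]}$ contributes precisely $\omega^{-1}(g\otimes g^{a-1}\otimes g^{b})$. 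Thus $v^{[a]}v^{[b]}=(v\cdot v^{[a-1]})v^{[b]}=\omega^{-1}(g\otimes g^{a-1}\otimes g^{b})\,v\cdot(v^{[a-1]}v^{[b]})$, and inserting the inductive value of $v^{[a-1]}v^{[b]}$ gives \eqref{form: prodTV}.

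\textbf{Part (iii) — the main work.} The counit claim is trivial: $\varepsilon_T$ is multiplicative with $\varepsilon_T(v)=0$, so $\varepsilon_T(v^{[n]})=\delta_{n,0}$. For the coproduct I induct on $n$ via $\Delta_T(v^{[n]})=\Delta_T(v)\bullet\Delta_T(v^{[n-1]})$, where $\bullet$ is the braided-tensor-product algebra structure on $T(V)\otimes T(V)$, namely $(m\otimes m)\circ\Omega$ with $\Omega=\Omega_{T,T,T,T}$ of Notation \ref{not: big omega}. Writing $\Delta_T(v)=v\otimes1+1\otimes v$ and $\Delta_T(v^{[n-1]})=\sum_i\beta(i,n-1)\,v^{[i]}\otimes v^{[n-1-i]}$, I evaluate $\Omega$ on the homogeneous pieces: its single braiding $c_{V,W}$ (formula \eqref{braiding YD}) produces the scalar $\chi_{[i]}(g)$ from \eqref{form: ching}, while the four copies of ${}^{H}a^{\pm1}$ produce $\omega^{\pm1}$ evaluated on powers of $g$ (terms with a degree-$0$ leg vanishing by normalization). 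A careful bookkeeping — in particular the outer $a^{-1}_{U,W,V\otimes Z}$ contributes $\omega(g\otimes g^{i}\otimes g^{n-1-i})$ even for the $(v\otimes1)$-summand — reduces the whole inductive step to the single scalar identity
\[
\beta(i,n)=\beta(i-1,n-1)\,\omega(g\otimes g^{i-1}\otimes g^{n-i})+\beta(i,n-1)\,\chi_{[i]}(g)\,\omega(g\otimes g^{i}\otimes g^{n-1-i})\,\omega^{-1}(g^{i}\otimes g\otimes g^{n-1-i}),
\]
with the usual conventions $\beta(-1,\cdot)=\beta(n,n-1)=0$. I expect the extraction of $\Omega$ and the sign/$\omega$-bookkeeping to be the principal obstacle, since it is easy to drop one of these non-normalizable factors.

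\textbf{Closing the induction.} It then remains to verify the boxed identity from the definition \eqref{form:n/i}. Splitting $\binom{n}{i}_q$ by the Gaussian Pascal recursion $\binom{n}{i}_q=\binom{n-1}{i-1}_q+q^{i}\binom{n-1}{i}_q$, the $\binom{n-1}{i-1}_q$-part matches directly since the extra $\omega(g\otimes g^{i-1}\otimes g^{n-i})$ completes the product $\prod_{0\le j\le i-1}\omega(g\otimes g^{j}\otimes g^{n-i})$. For the $q^{i}\binom{n-1}{i}_q$-part, after cancelling the common factor one must show $\prod_{0\le j\le i-1}\omega(g\otimes g^{j}\otimes g^{n-1-i})\cdot\prod_{0\le k\le i-1}\omega(g\otimes g^{k}\otimes g)\cdot\omega(g\otimes g^{i}\otimes g^{n-1-i})\cdot\omega^{-1}(g^{i}\otimes g\otimes g^{n-1-i})=\prod_{0\le j\le i-1}\omega(g\otimes g^{j}\otimes g^{n-i})$; here I invoke Lemma \ref{lem:omegatrick} with $(a,b,c)=(i,1,n-1-i)$ to expand $\omega^{-1}(g^{i}\otimes g\otimes g^{n-1-i})$, after which the telescoping of the two shifted $\omega(g\otimes g^{\bullet}\otimes g^{n-1-i})$-products (the $j=0$ end-term vanishing by normalization) leaves exactly the right-hand side. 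This completes the induction and hence \eqref{form: coprodTV}.
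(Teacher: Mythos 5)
Your proposal is correct and follows essentially the same route as the paper's proof: the same inductive unwinding of \eqref{form:chin0} by convolution associativity for (i), the same induction via the associativity constraint \eqref{form: assoc constraint} for (ii), and for (iii) the same reduction of $\Delta_T m_T=(m_T\otimes m_T)\Omega_T(\Delta_T\otimes\Delta_T)$ to the scalar recursion for $\beta(i,n)$, closed by the $q$-Pascal identity together with Lemma \ref{lem:omegatrick} applied to $\omega^{-1}(g^{i}\otimes g\otimes g^{n-1-i})$. The only cosmetic difference is that you evaluate \eqref{form:chin} directly at the grouplike $g$ to get \eqref{form: ching}, whereas the paper passes through the cocommutative-element form \eqref{form:chincocomm}; the computation is identical.
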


\begin{proof}   Equation (\ref{form: coactionTV}) follows from  the definition of the comodule
structure on the tensor product in ${_{H}^{H}\mathcal{YD}}$ in Remark \ref{rem: weak right centre}
and the fact that  $\rho \left( v\right) =g\otimes v$.

Next we compute $h\vartriangleright v^{\left[ n\right] }$ for $h\in H$. If $n=0$, then $\chi_{[0]} = \varepsilon$
satisfies (\ref{form: actionTV}). We
 prove, by induction on $n\geq 1,$ that (\ref{form: actionTV}) holds for
  $\chi _{\left[ n\right]
}\in H^{\ast }$  defined inductively   by equation (\ref{form:chin0}).  Equation (\ref{form:chin0}) gives  $\chi _{%
\left[ 1\right] }=\chi $, which satisfies (\ref{form: actionTV}). Let $n>1$ and assume that the statement holds for $%
n-1$. Then using the induction assumption and (\ref{form: coactionTV}), we have:
\begin{eqnarray*}
h\vartriangleright v^{\left[ n\right] } &=&h\vartriangleright \left(
v\otimes v^{\left[ n-1\right] }\right)
\\
&
\overset{(\ref{form:YDtens})}{=}&\left[
\begin{array}{c}
\omega \left( h_{1}\otimes v_{-1}\otimes v_{-2}^{\left[ n-1\right] }\right)
\omega ^{-1}\left( \left( h_{2}\vartriangleright v_{0}\right) _{-2}\otimes
h_{3}\otimes v_{-1}^{\left[ n-1\right] }\right)
\\
\omega \left( \left( h_{2}\vartriangleright v_{0}\right) _{-1}\otimes \left(
h_{4}\vartriangleright v_{0}^{\left[ n-1\right] }\right) _{-1}\otimes
h_{5}\right) \left( h_{2}\vartriangleright v_{0}\right) _{0}\otimes \left(
h_{4}\vartriangleright v_{0}^{\left[ n-1\right] }\right) _{0}%
\end{array}%
\right]
\\
&=&\left[
\begin{array}{c}
\omega \left( h_{1}\otimes g\otimes g^{n-1}\right) \omega ^{-1}\left(
g\otimes h_{3}\otimes g^{n-1}\right)
\\
\omega \left( g\otimes \left(
h_{4}\vartriangleright v^{\left[ n-1\right] }\right) _{-1}\otimes
h_{5}\right) \chi(h_2)v \otimes \left(
h_{4}\vartriangleright v^{\left[ n-1\right] }\right) _{0}%
\end{array}%
\right]
\\
&=&\left[
\begin{array}{c}
\omega \left( h_{1}\otimes g\otimes g^{n-1}\right)
\omega ^{-1}\left( g\otimes h_{3}\otimes g^{n-1}\right) \\
 \omega \left( g\otimes g^{n-1}
\otimes h_{5}\right) \chi \left( h_{2}\right)v\otimes \chi _{\left[ n-1\right] }\left( h_{4}\right)
v^{\left[ n-1\right] }%
\end{array}%
\right]
\\
%&=&\left[
%\begin{array}{c}
%\omega \left( h_{1}\otimes g\otimes g^{n-1}\right) \chi \left( h_{2}\right)
%\omega ^{-1}\left( g\otimes h_{3}\otimes g^{n-1}\right) \\
%\chi _{\left[ n-1\right] }\left( h_{4}\right) \omega \left( g\otimes
%g^{n-1}\otimes h_{5}\right) v\otimes v^{\left[ n-1\right] }%
%\end{array}%
%\right]
%\\
&=&\left[ \omega \left( -\otimes g\otimes g^{n-1}\right) \ast \chi \ast
\omega ^{-1}\left( g\otimes -\otimes g^{n-1}\right) \ast \chi _{\left[ n-1%
\right] }\ast \omega \left( g\otimes g^{n-1}\otimes -\right) \right] \left(
h\right) v^{\left[ n\right] }.
\end{eqnarray*}%

Now, using this formula, we prove by induction on $n\geq 1$ that (\ref%
{form:chin}) holds. For $n=1,$ since $\omega$ is a normalized
cocycle and $g^0=1$, then the
right hand side of (\ref%
{form:chin}) is just $ \chi =\chi _{\left[ 1\right]
}$.

Let $n>1$ and assume the formula holds for $n-1$. Then%
\begin{eqnarray*}
\chi _{\left[ n\right] } &=&\omega \left( -\otimes g\otimes g^{n-1}\right)
\ast \chi \ast \omega ^{-1}\left( g\otimes -\otimes g^{n-1}\right) \ast \chi
_{\left[ n-1\right] }\ast \omega \left( g\otimes g^{n-1}\otimes -\right) \\
\\
&=&
\left[
\begin{array}{c}
[\omega \left( -\otimes g\otimes g^{n-1}\right) \ast \chi \ast \omega
^{-1}\left( g\otimes -\otimes g^{n-1}\right)] \ast \\
\left[ \prod\limits_{0\leq i\leq n-2}^{\ast }\omega \left( -\otimes g\otimes
g^{n-2-i}\right) \ast \chi \ast \omega ^{-1}\left( g\otimes -\otimes
g^{n-2-i}\right) \right] \ast \left[ \prod\limits_{0\leq i\leq n-2}^{\ast
}\omega \left( g\otimes g^{i}\otimes -\right) \right] \ast \\
\omega \left( g\otimes g^{n-1}\otimes -\right)%
\end{array}%
\right] \\
&=&\left[ \prod\limits_{0\leq i\leq n-1}^{\ast }\omega \left( -\otimes
g\otimes g^{n-1-i}\right) \ast \chi \ast \omega ^{-1}\left( g\otimes
-\otimes g^{n-1-i}\right) \right] \ast \left[ \prod\limits_{0\leq i\leq
n-1}^{\ast }\omega \left( g\otimes g^{i}\otimes -\right) \right] ,
\end{eqnarray*}%
and so (\ref%
{form:chin}) holds for all $n \geq 1$.
We note that if (\ref%
{form:chin})  is applied to a cocommutative element, then since the product for $0 \leq i \leq n-1$ is the same as taking the product over $0 \leq n-1-i \leq n-1$,
\begin{equation}
\chi _{\left[ n\right] }=    \chi^n \ast \left [ \prod\limits_{0\leq i\leq n-1}^{\ast }\omega
\left( -\otimes g\otimes g^{i}\right) \ast \omega ^{-1}\left(
g\otimes -\otimes g^{i}\right)  \ast \omega \left( g\otimes g^{i}\otimes -\right)\right ].
\label{form:chincocomm}
\end{equation}
Equation (\ref{form: ching}) follows immediately.

\par (ii) Let $b\in\mathbb{N}$ and we prove by induction on $a\geq 1$ that (\ref{form: prodTV}) holds. For $%
a=1$, we have by definition $v^{\left[ a\right] }v^{\left[ b\right] }=vv^{\left[ b\right]
}=v\otimes v^{\left[ b\right] }=v^{\left[ 1+b\right] }=v^{\left[ a+b\right]
} $. Let $a>1$ and assume   (\ref{form: prodTV})  for $a-1$. Then
\begin{eqnarray*}
v^{\left[ a\right] }v^{\left[ b\right] } &=&\left( v\otimes v^{\left[ a-1%
\right] }\right) v^{\left[ b\right] }=\left( vv^{\left[ a-1\right] }\right)
v^{\left[ b\right] }\overset{(\ref{form: assoc constraint})}{=} \omega ^{-1}\left( v_{-1}\otimes v_{-1}^{\left[ a-1%
\right] }\otimes v_{-1}^{\left[ b\right] }\right) v_{0}\left( v_{0}^{\left[
a-1\right] }v_{0}^{\left[ b\right] }\right)
\\
&\overset{(\ref{form: coactionTV})}{=}&\omega ^{-1}\left( g\otimes
g^{a-1}\otimes g^{b}\right) v\left( v^{\left[ a-1\right] }v^{\left[ b\right]
}\right)
\\
&=&\omega ^{-1}\left( g\otimes g^{a-1}\otimes g^{b}\right) v\left[
\prod\limits_{0\leq i\leq a-2}\omega ^{-1}\left( g\otimes g^{i}\otimes
g^{b}\right) \right] v^{\left[ a-1+b\right] }
 \\
&=&\left[ \prod\limits_{0\leq i\leq a-1}\omega ^{-1}\left( g\otimes
g^{i}\otimes g^{b}\right) \right] vv^{\left[ a-1+b\right] }=\left[
\prod\limits_{0\leq i\leq a-1}\omega ^{-1}\left( g\otimes g^{i}\otimes
g^{b}\right) \right] v^{\left[ a+b\right] },
\end{eqnarray*}%

and so we have proved that (\ref{form: prodTV}) holds for $a \geq 1$.

\par (iii) We wish to show that
\begin{equation*}
\Delta _{T}\left( v^{\left[ n\right] }\right) =\sum_{0\leq i\leq n}\binom{n}{%
i}_{q}  \prod\limits_{0\leq j\leq
i-1}\omega \left( g\otimes g^{j}\otimes g^{n-i}\right)    v^{\left[ i\right] }\otimes v^{\left[ n-i\right] },
\end{equation*}%
where if $i=0$, the empty product  is defined to be $1$.  If $n=0$ the formula holds since
$\Delta_T(v^{[0]}) = 1 \otimes 1$.  If $n=1$, the formula holds since $\Delta _{T}\left( v^{\left[ 1\right] }\right) =v\otimes
1+1\otimes v$, $\binom{1}{0}_q = \binom{1}{1}_q = 1$, and $ \omega(- \otimes 1 \otimes -) = 1$.
Let $n>1$ and suppose that the formula holds for $n-1$.
Then
\begin{eqnarray*}
&& \Delta _{T}\left( v^{\left[ n\right] }\right)
= \Delta _{T}m_{T}\left(
v\otimes v^{\left[ n-1\right] }\right)
= \left( m_{T}\otimes m_{T}\right) \Omega_{T} \left( \Delta
_{T}\otimes \Delta _{T}\right) \left( v\otimes v^{\left[ n-1\right] }\right)
\\
&=&\left( m_{T}\otimes m_{T}\right) \Omega_{T }
 \left[ \left(
v\otimes 1+1\otimes v\right) \otimes
   (\sum_{0\leq i\leq n-1}\beta(i,n-1)   v^{\left[ i\right] }\otimes v^{\left[ n-1-i\right] }) \right].
\end{eqnarray*}

From the definition of $\Omega_T$, it is easily seen that:
\begin{equation*}
 \Omega_T((v \otimes 1) \otimes ( v^{\left[ i\right] }\otimes v^{\left[ n-1-i\right]} ))\\
= \omega(g \otimes g^i \otimes g^{n-1-i}) ((v \otimes v^{[i]}) \otimes (1 \otimes   v^{\left[ n-1-i\right]} ))
\end{equation*}
and
\begin{equation*}
\Omega_T((1 \otimes v) \otimes (v^{[i]} \otimes v^{[n-1-i]}) = \chi_{[i]}(g) \omega(g\otimes g^i \otimes g^{n-1-i})
 \omega^{-1}(g^i \otimes g \otimes g^{n-1-i}) (1 \otimes v^{[i]}) \otimes ( v \otimes v^{[n-1-i]}).
\end{equation*}

Then

\begin{eqnarray*}
&&\Delta _{T}\left( v^{[n]}\right)  \\
&&=\sum_{0\leq i\leq n-1}\beta (i,n-1)\left[
\begin{array}{c}
\omega (g\otimes g^{i}\otimes g^{n-1-i})v^{[i+1]}\otimes v^{[n-1-i]} \\
+\chi _{\lbrack i]}(g)\omega (g\otimes g^{i}\otimes g^{n-1-i})\omega
^{-1}(g^{i}\otimes g\otimes g^{n-1-i})v^{[i]}\otimes v^{[n-i]}]%
\end{array}%
\right] .
\end{eqnarray*}

The coefficient of $v^{[0]} \otimes v^{[n]}$ in the expression above is $\chi_{[0]}(g)\omega(g \otimes 1 \otimes g^{n-1})
\omega^{-1}(1 \otimes g \otimes g^{n-1}) = 1 = \beta(0,n)$ and similarly the coefficient of $v^{[n]} \otimes v^{[0]}$ is $
\beta(n-1,n-1) =1$.
For $    1 \leq j \leq n-1$, we compute the coefficient of $v^{[j]} \otimes v^{[n-j]}$ in this expression to be:
\begin{eqnarray*}
&&\beta(j-1,n-1)\omega(g \otimes g^{j-1} \otimes g^{n-j}) + \beta(j,n-1)\omega(g \otimes g^j \otimes g^{n-1-j})
 \chi_{[j]}(g) \omega^{-1}(g^j \otimes g \otimes g^{n-1-j})
\\
&=& \left [ \binom{n-1}{j-1}_q \prod_{0 \leq k \leq j-2} \omega(g \otimes g^k \otimes g^{n-j}) \right ]
\omega(g \otimes g^{j-1} \otimes g^{n-j})
\\
&& + \left [ \binom{n-1}{j}_q \prod_{0 \leq i \leq j-1} \omega(g \otimes g^i \otimes g^{n-1-j}) \right ]
\omega(g \otimes g^j \otimes g^{n-1-j} )\chi_{[j]}(g) \omega^{-1}(g^j \otimes g \otimes g^{n-1-j})\\
&=&   \binom{n-1}{j-1}_q \prod_{0 \leq k \leq j-1} \omega(g \otimes g^k \otimes g^{n-j} )
   \\
&& +  \binom{n-1}{j}_q \left[ \prod_{0 \leq i \leq j} \omega(g \otimes g^i \otimes g^{n-1-j} )\right]
  \chi_{[j]}(g) \omega^{-1}(g^j \otimes g \otimes g^{n-1-j})\\
  &=&   \binom{n-1}{j-1}_q \prod_{0 \leq k \leq j-1} \omega(g \otimes g^k \otimes g^{n-j} )
   \\
&& +  \binom{n-1}{j}_q \left[ \prod_{0 \leq s \leq j-1} \omega(g \otimes g^{s+1} \otimes g^{n-1-j} )\right]
  \chi_{[j]}(g) \omega^{-1}(g^j \otimes g \otimes g^{n-1-j}).
\end{eqnarray*}

By Lemma \ref{lem:omegatrick}
 \begin{displaymath}
  \omega ^{-1}\left( g^{j}\otimes g\otimes g^{n-1-j}\right)
 =\prod\limits_{ 0\leq s\leq j-1}\omega ^{-1}\left( g\otimes g^{s+1 }\otimes
g^{n-1-j}\right)  \omega ^{-1}\left( g\otimes
g^{s }\otimes g\right)  \omega \left(
g\otimes g^{s }\otimes g^{n-j}\right)
\end{displaymath}
  so that the last summand in the  expression above becomes:
\begin{eqnarray*}
&& \binom{n-1}{j}_q
  \chi_{[j]}(g) \prod_{0 \leq s \leq j-1} \omega^{-1}(g \otimes g^s \otimes g) \prod_{0 \leq t \leq j-1}\omega(g \otimes g^t \otimes g^{n-j}) \\
  &\overset{\eqref{form: ching}}=& \binom{n-1}{j}_q q^j \prod_{0 \leq s \leq j-1} \left (\omega(g \otimes g^s \otimes g)
   \omega^{-1}(g \otimes g^s \otimes g)\right )  \prod_{0 \leq t \leq j-1}\omega(g \otimes g^t \otimes g^{n-j})\\
   &=& \binom{n-1}{j}_q q^j  \prod_{0 \leq t \leq j-1}\omega(g \otimes g^t \otimes g^{n-j}).
\end{eqnarray*}

Thus the coefficient of $v^{[j]} \otimes v^{[n-j]}$ in $\Delta _{T}\left( v^{\left[ n\right] }\right)$ is
\begin{eqnarray*}
&&  \left [ \binom{n-1}{j-1}_q
 +  \binom{n-1}{j}_q q^j \right ] \prod_{0 \leq t \leq j-1}\omega(g \otimes g^t \otimes g^{n-j})
= \binom{n}{j}_q  \prod_{0 \leq t \leq j-1}\omega(g \otimes g^t \otimes g^{n-j}),
\end{eqnarray*}
and this is indeed $\beta(j,n)$ as required. It is then straightforward to see that $\varepsilon_T(v^{[n]} )
= \delta_{n,0}$.
\end{proof}

 The next technical results allow us to construct a bialgebra quotient of the tensor algebra.

\begin{proposition}
\label{pro:BialgQuotient} Let $\left( A,m_{A},u_{A},\Delta _{A},\varepsilon
_{A}\right) $ be a bialgebra in an abelian prebraided monoidal category $\left(
\mathcal{M},\otimes ,\mathbf{1},a,l,r,c\right) $ where the tensor functors are additive and right exact. Let $\left(
I,i_{I}:I\rightarrow A\right) $ be a subobject of $A$ in $\mathcal{M}$ such
that
\begin{gather}
\left( p_{R}\otimes p_{R}\right)\circ   \Delta _{A} \circ i_{I}=0,
\label{form:coid1} \\
\varepsilon _{A} \circ i_{I}=0,  \label{form:coid2} \\
p_{R} \circ m_{A} \circ i_{K}=0.  \label{form:coid3}
\end{gather}%
where $R:=A/I$, $p_{R}:A\rightarrow R$ denotes the canonical projection and $%
\left( K,i_{K}:K\rightarrow A\otimes A\right) :=\mathrm{Ker}\left(
p_{R}\otimes p_{R}\right) $. Then there are maps $m_{R},u_{R},\Delta
_{R},\varepsilon _{R}$ such that $\left( R,m_{R},u_{R},\Delta
_{R},\varepsilon _{R}\right) $ is a bialgebra in $\left( \mathcal{M},\otimes
,\mathbf{1},a,l,r,c\right) $ and $p_{R}$ is a bialgebra morphism.
\end{proposition}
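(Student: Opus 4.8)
The plan is to build the four structure maps on $R=A/I$ from the universal properties of cokernels, and then to check each bialgebra axiom by precomposing with a suitable epimorphism and reducing to the corresponding axiom for $A$. Since $R=A/I$, the projection $p_R$ is the cokernel of $i_I$, so any morphism out of $A$ that vanishes on $i_I$ factors uniquely through $p_R$. First I would define $\Delta_R$ as the unique map with $\Delta_R\circ p_R=(p_R\otimes p_R)\circ\Delta_A$, which exists by \eqref{form:coid1}; likewise $\varepsilon_R$ with $\varepsilon_R\circ p_R=\varepsilon_A$ exists by \eqref{form:coid2}, and I set $u_R:=p_R\circ u_A$. For $m_R$ the key observation is that, since the tensor functors are additive and right exact, $A\otimes p_R$ and $p_R\otimes R$ are cokernels, hence epimorphisms; thus $p_R\otimes p_R$ (and more generally $p_R^{\otimes n}$) is an epimorphism, and being epi with kernel $i_K$ it is the cokernel of $i_K$. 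Then \eqref{form:coid3} lets me define $m_R$ as the unique map with $m_R\circ(p_R\otimes p_R)=p_R\circ m_A$. These four intertwining identities say precisely that $p_R$ respects all the structure, so once $R$ is shown to be a bialgebra, $p_R$ is automatically a bialgebra morphism.

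Next I would verify the (co)algebra axioms by a uniform device: for each axiom I precompose the desired identity for $R$ with the appropriate epimorphism $p_R^{\otimes n}$, push the copies of $p_R$ through the constraints $a,l,r$ by their naturality, replace each structure map of $R$ by the corresponding one of $A$ via the intertwining identities, invoke the axiom already known in $A$, and finally cancel the epimorphism. For instance, associativity follows from
\[
m_R\circ(m_R\otimes R)\circ p_R^{\otimes 3}=p_R\circ m_A\circ(m_A\otimes A),\qquad m_R\circ(R\otimes m_R)\circ a_{R,R,R}\circ p_R^{\otimes 3}=p_R\circ m_A\circ(A\otimes m_A)\circ a_{A,A,A},
\]
whose right-hand sides coincide by associativity of $m_A$, so the left-hand sides agree after cancelling $p_R^{\otimes 3}$. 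Coassociativity and the unit and counit axioms are entirely analogous, the unit axioms additionally using $u_R=p_R\circ u_A$ together with the naturality of $l$ and $r$.

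Finally I would treat the bialgebra compatibilities, the main one being $\Delta_R\circ m_R=(m_R\otimes m_R)\circ\Omega_{R,R,R,R}\circ(\Delta_R\otimes\Delta_R)$ with $\Omega$ as in Notation \ref{not: big omega}. Here the point is that $\Omega$ is a composite of associativity constraints and the prebraiding $c$, all natural, so $\Omega_{R,R,R,R}\circ p_R^{\otimes 4}=p_R^{\otimes 4}\circ\Omega_{A,A,A,A}$ with the evident bracketings; precomposing the desired identity with the epimorphism $p_R\otimes p_R$ and using the intertwining relations then reduces it to the compatibility already holding in $A$. The same template disposes of $\varepsilon_R\circ m_R$, $\Delta_R\circ u_R$, and $\varepsilon_R\circ u_R$. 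The step demanding the most care, and where additivity and right exactness are indispensable, is establishing that each $p_R^{\otimes n}$ is an epimorphism and that $p_R\otimes p_R=\mathrm{coker}(i_K)$: this is exactly what makes $m_R$ well defined and legitimizes every epi-cancellation above. Once this is in place, the remaining verifications are routine naturality chases.
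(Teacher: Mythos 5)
Your proposal is correct and follows essentially the same route as the paper: define $\Delta_R$, $\varepsilon_R$, $u_R$, $m_R$ by the universal properties of $p_R$ and of $p_R\otimes p_R=\mathrm{Coker}(i_K)$ (the latter justified by right exactness of the tensor functors), then verify every axiom by precomposing with the epimorphism $p_R^{\otimes n}$ and cancelling. The only cosmetic difference is that the paper invokes coherence to suppress the constraints, while you carry $a$, $l$, $r$, and $\Omega$ through explicitly via naturality; both are valid.
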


\begin{proof}
{In this proof we omit the constraints as in view of the coherence theorem
they take care of themselves.} By (\ref{form:coid1}) and (\ref{form:coid2}),
there are morphisms
\begin{equation*}
\Delta _{R}:R\rightarrow R\otimes R\qquad \text{and}\qquad \varepsilon
_{R}:R\rightarrow \Bbbk
\end{equation*}%
defined by $\Delta _{R}  p_{R}=\left( p_{R}\otimes p_{R}\right)
\Delta _{A}$ and $\varepsilon _{R} ( p_{R})=\varepsilon _{A}.$ The first
equality yields%
\begin{eqnarray*}
\left( \Delta _{R}\otimes R\right)   \Delta _{R} p_{R} &=&\left(
\left( p_{R}\otimes p_{R}\right) \otimes p_{R}\right)   \left( \Delta
_{A}\otimes A\right)   \Delta _{A} \\
&=&\left( p_{R}\otimes \left( p_{R}\otimes p_{R}\right) \right)   \left(
A\otimes \Delta _{A}\right)   \Delta _{A}=\left( R\otimes \Delta
_{R}\right)   \Delta _{R} p_{R}
\end{eqnarray*}%
so that $\left( \Delta _{R}\otimes R\right)   \Delta _{R}=\left(
R\otimes \Delta _{R}\right)   \Delta _{R}.$ The other equality leads to
counitarity of $\Delta _{R}.$ Since the tensor functors are right exact, we
have that $p_{R}\otimes p_{R}$ is an epimorphism and hence $\left( R\otimes
R,p_{R}\otimes p_{R}\right) =\mathrm{Coker}\left( i_{K}\right) .$ Thus, by (%
\ref{form:coid3}), we have that there is a unique map $m_{R}:R\otimes
R\rightarrow R$ such that $m_{R}( p_{R})=\left( p_{R}\otimes p_{R}\right)
  m_{A}.$ Set $u_{R}:= p_{R}( u_{A}).$ The first equality yields%
\begin{equation*}
m_{R}  \left( m_{R}\otimes R\right)    p_{R}^{\otimes 3} =m_{R}  \left( R\otimes m_{R}\right)
 p_{R}^{\otimes 3}
\end{equation*}%
so that, by right exactness of tensor functors, we get $m_{R}  \left(
m_{R}\otimes R\right) =m_{R}  \left( R\otimes m_{R}\right) .$ Similarly
one gets $m_{R}  \left( u_{R}\otimes R\right) =l_{R}$ and $m_{R}
\left( R\otimes u_{R}\right) =r_{R}.$ Finally, we have%
\begin{eqnarray*}
&&\left( m_{R}\otimes m_{R}\right)   \left( R\otimes c_{R,R}\otimes
R\right)   \left( \Delta _{R}\otimes \Delta _{R}\right)   \left(
p_{R}\otimes p_{R}\right)  \\
&=&\left( p_{R}\otimes p_{R}\right)  \left( m_{A}\otimes m_{A}\right)
 \left( A\otimes c_{A,A}\otimes A\right)   \left( \Delta
_{A}\otimes \Delta _{A}\right)  \\
&=&\left( p_{R}\otimes p_{R}\right)  {\Delta _{A}  m_{A}}  =
 \Delta _{R}  m_{R}  \left( p_{R}\otimes p_{R}\right).
\end{eqnarray*}%
Since $p_{R}\otimes p_{R}$ is an epimorphism, we get $\left(
m_{R}\otimes m_{R}\right)   \left( R\otimes c_{R,R}\otimes R\right)
  \left( \Delta _{R}\otimes \Delta _{R}\right) =\Delta _{R}  m_{R}.$
Thus $\left( R,m_{R},u_{R},\Delta _{R},\varepsilon _{R}\right) $ is a
bialgebra in $\left( \mathcal{M},\otimes ,\mathbf{1},a,l,r,c\right) .$
Clearly $p_{R}$ is a bialgebra morphism.
\end{proof}

\begin{lemma}
\label{lem:generated}Let $(H, \omega)$ be a dual quasi-bialgebra and let $I$ be an
ideal of a bialgebra $A$ in ${_{H}^{H}\mathcal{YD}}$. Let $z,u\in A$ and
assume $\Delta _{A}\left( u\right) \in A\otimes I+I\otimes A.$ Then $\Delta
_{A}\left( zu\right) \in A\otimes I+I\otimes A.$
\end{lemma}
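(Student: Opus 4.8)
The plan is to use that, since $A$ is a bialgebra in ${_{H}^{H}\mathcal{YD}}$, the comultiplication $\Delta_A$ is an algebra map, so that $\Delta_A(zu)$ is obtained from $\Delta_A(z)$ and $\Delta_A(u)$ through the braided multiplication on $A\otimes A$, and then to track how that multiplication meets the subspace $A\otimes I+I\otimes A$, exploiting that $I$ is a two-sided ideal and a subobject in ${_{H}^{H}\mathcal{YD}}$.

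First I would recall the bialgebra compatibility already used in the proof of Proposition \ref{pro:BialgQuotient}, namely
\[
\Delta_A\, m_A=(m_A\otimes m_A)(A\otimes c_{A,A}\otimes A)(\Delta_A\otimes\Delta_A),
\]
where $c_{A,A}(x\otimes y)=(x_{-1}\vartriangleright y)\otimes x_{0}$ is the prebraiding of Remark \ref{rem: weak right centre}. Writing $\Delta_A(z)=z^{1}\otimes z^{2}$ and $\Delta_A(u)=u^{1}\otimes u^{2}$ (suppressing the constraints for the moment), this yields
\[
\Delta_A(zu)=z^{1}\bigl(z^{2}_{-1}\vartriangleright u^{1}\bigr)\otimes z^{2}_{0}\,u^{2}.
\]
Now I would split $\Delta_A(u)\in A\otimes I+I\otimes A$ into a summand in $A\otimes I$ and a summand in $I\otimes A$ and treat them separately. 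If $u^{1}\otimes u^{2}\in A\otimes I$, then $u^{2}\in I$, and since $I$ is a right ideal the factor $z^{2}_{0}u^{2}$ lies in $I$, so this contribution lies in $A\otimes I$. If instead $u^{1}\otimes u^{2}\in I\otimes A$, then $u^{1}\in I$; because $I$ is closed under the $H$-action (it is a subobject in ${_{H}^{H}\mathcal{YD}}$) we get $z^{2}_{-1}\vartriangleright u^{1}\in I$, and then, $I$ being a left ideal, the factor $z^{1}(z^{2}_{-1}\vartriangleright u^{1})$ lies in $I$, so this contribution lies in $I\otimes A$. Adding the two cases gives $\Delta_A(zu)\in A\otimes I+I\otimes A$.

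The one point that requires care is the nontrivial associativity constraint of ${_{H}^{H}\mathcal{YD}}$, which I suppressed above: the braided multiplication on $A\otimes A$ really involves the constraints ${}^{H}a^{\pm1}$, each contributing a scalar $\omega^{\pm1}(\cdots)$ and reshuffling the four tensor legs. This is exactly where I would invoke that $I$ is a full subobject (closed under both action and coaction), so that the inclusion $i_I\colon I\to A$ is a morphism and the constraints, being natural isomorphisms, carry $A\otimes I+I\otimes A$ into its reassociated form without ever moving an $I$-leg out of an $I$-slot, the accompanying scalars being harmless. With this understood, coherence lets me perform the strict computation above, as the authors do throughout, and the ideal and submodule properties of $I$ finish the argument.
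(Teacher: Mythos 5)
Your proof is correct and is essentially the paper's own argument: the authors simply package your element-wise computation into the single map $\Omega_{A}$ of Notation \ref{not: big omega} (which is exactly $A\otimes c_{A,A}\otimes A$ with the associativity constraints inserted), note that it sends $(A\otimes A)\otimes(A\otimes I)$ into $(A\otimes A)\otimes(A\otimes I)$ and $(A\otimes A)\otimes(I\otimes A)$ into $(A\otimes I)\otimes(A\otimes A)$ --- precisely your two cases, the second using that $I$ is stable under the $H$-action --- and then apply $m_{A}\otimes m_{A}$ and the ideal property. One cosmetic remark: both products $z^{2}_{0}u^{2}$ and $z^{1}\bigl(z^{2}_{-1}\vartriangleright u^{1}\bigr)$ invoke $AI\subseteq I$, i.e.\ the \emph{left}-ideal property (you wrote ``right ideal'' in the first case), which is harmless since $I$ is two-sided.
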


\begin{proof}
Since $\Delta_A(u) \in A \otimes I + I \otimes A$, then
\begin{eqnarray*}
&& \Delta _{A}\left( zu\right)= \Delta _{A}m_{A}\left( z\otimes u\right)  =
 \left( m_{A}\otimes m_{A}\right)\Omega_{A }\left( \Delta
_{A}\otimes \Delta _{A}\right) \left( z\otimes u\right) \\
& \in &\left( m_{A}\otimes m_{A}\right)\Omega_{A }[(A\otimes A)\otimes (A \otimes I) + (A \otimes A )
\otimes (I \otimes A)]\\
& \subseteq &\left( m_{A}\otimes m_{A}\right) [(A\otimes A)\otimes (A \otimes I) + (A \otimes I )
\otimes (A \otimes A)] \subseteq A \otimes I + I \otimes A. \  \end{eqnarray*}
 \end{proof}

Let $\left( H,g,\chi \right) $ be a quasi-$YD$ datum for $q$ with
  $q$   a primitive $N$-th root of unity with $N>0$.  Let $V=\Bbbk v \in {_H^H\mathcal{YD}}$ with coaction and action defined
  by $g$ and $\chi$ as in Remark \ref{rem: YD datum}. Let $\left( v^{\left[ n\right]
}\right) _{n\in
\mathbb{N}}$ be the basis of $T:=T\left( V\right) $ considered at the beginning of this
section.

Let $I$ be the two-sided ideal of $T$ generated by $v^{\left[ N\right] }$
, i.e,  $I=:T\left( IT\right) $. Since $vv^{\left[ n\right]
}=v^{\left[ n+1\right] }$,  by (\ref{form: prodTV}),   $I$ is
the vector space with basis $\left( v^{\left[ n\right] }\right) _{n\geq N}.$
Thus, $ T/I$ identifies with $K\left[ X\right] /\left( X^{N}\right) .$ By
formulas  (\ref{form: actionTV}) and (\ref{form: coactionTV}), we deduce that $%
I$ is a subobject of $T$ in ${_{H}^{H}\mathcal{YD}}$. Hence $I$ is a
two-sided ideal of $T$ in ${_{H}^{H}\mathcal{YD}}$. Moreover $R:=T/I$ with the induced structures is in ${%
_{H}^{H}\mathcal{YD}}$  so that the
canonical projection $p_{R}:T\rightarrow R$ is in ${_{H}^{H}\mathcal{YD}}$.

To check (\ref{form:coid1}) for $I$ we must show that
\begin{equation*}
\Delta _{T}\left( v^{\left[ n\right] }\right) \in T\otimes I+I\otimes T,%
\text{ for every }n\geq N.
\end{equation*}%

For $n=N$, this follows from (\ref{form: coprodTV})  and the fact that, since $q$ has order $N$, then $\binom{N}{i}_{q}=0$ for $i\neq 0,N$.
For $n\geq N+1,$ in view of (\ref{form: prodTV}) we have%
\begin{equation*}
v^{\left[ n\right] }=\prod\limits_{0\leq i\leq n-N-1}\omega \left( g\otimes
g^{i}\otimes g^{N}\right) v^{\left[ n-N\right] }v^{\left[ N\right] }.
\end{equation*}%
Hence,  Lemma \ref{lem:generated} implies that $\Delta _{T}\left( v^{\left[ n\right] }\right) \in T\otimes
I+I\otimes T$.

 Since by
Proposition \ref{prop: structure of T(V)},  $\varepsilon_T(v^{[n]}) = \delta_{n,0}$ and $N\neq0$,
it is clear that $\varepsilon _{T}\left( I\right) =0.$
Since $I$ is a two-sided ideal of $T=T\left( V\right) $, we have that $%
m_{T}\left( T\otimes I+I\otimes T\right) \subseteq I.$ Since $\mathrm{Ker}%
\left( p_{R}\otimes p_{R}\right) =T\otimes I+I\otimes T,$ we deduce that $%
m_{T}\left( \mathrm{Ker}\left( p_{R}\otimes p_{R}\right) \right) \subseteq
I. $ By Proposition \ref{pro:BialgQuotient}, there are maps $%
m_{R},u_{R},\Delta _{R},\varepsilon _{R}$ such that $\left(
R,m_{R},u_{R},\Delta _{R},\varepsilon _{R}\right) $ is a bialgebra in $%
\left( {_{H}^{H}\mathcal{YD}},\otimes ,\mathbf{\Bbbk },{^{H}}a,l,r,c\right) $
and $p_{R}$ is a bialgebra morphism.

  Recall that the Iverson bracket $\left[[ P ]\right] $ is a notation that denotes a number that is $1$
if the condition $P$ in double square brackets is satisfied, and $0$ otherwise.

By the above we have the following result.

\begin{theorem}\label{teo:RsmachHthin}
\label{teo:R}   Let $\left( \left( H,\omega \right) ,g,\chi \right) $  be a
quasi-$YD$ datum  for $q>1$, a primitive $N^{th}$ root of unity.

\par (i) There is a bialgebra $R=R\left( \left( H,\omega \right) ,g,\chi \right)
$ in ${_{H}^{H}\mathcal{YD}}$ with basis $\left( x^{\left[ n\right] }\right)
_{0\leq n\leq N-1}$ and structure given as follows:%
\begin{eqnarray*}
\rho \left( x^{\left[ n\right] }\right) &:&=g^{n}\otimes x^{\left[ n\right]
}, \\
h\vartriangleright x^{\left[ n\right] } &:&=\chi _{\left[ n\right] }\left(
h\right) x^{\left[ n\right] }, \text{ where } \chi _{\left[ n\right] } \in H^\ast \text{ is defined in (\ref{form:chin}) }, \\
1_{R} &:&=x^{\left[ 0\right] }, \\
m_{R}\left( x^{\left[ a\right] }\otimes x^{\left[ b\right] }\right)
&=&  [[a+b \leq N-1  ]] \left[\prod\limits_{0\leq i\leq a-1}\omega ^{-1}\left(
g\otimes g^{i}\otimes g^{b}\right)\right] x^{\left[ a+b\right] }\text{ when }%
a,b\geq 0,
 \\
\Delta _{R}\left( x^{\left[ n\right] }\right) &=&
\sum_{0 \leq i \leq n} \beta(i,n) x^{[i]} \otimes x^{[n-i]},
 \text{ where } \beta(i,n)\text{  is defined in } (\ref{form:n/i}),
 \\
\varepsilon _{R}\left( x^{\left[ n\right] }\right) &:&=\delta _{n,0}.
\end{eqnarray*}
\par (ii) For $R$ the bialgebra in $_H^H\mathcal{YD}$ from (i), let $B:=R \#H$, the bosonization
of $R$ by $H$.  Then

\begin{equation*}
B_{0}\subseteq \Bbbk 1_{R}\otimes H.
\end{equation*}

\end{theorem}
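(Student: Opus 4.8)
The plan is to produce an explicit coalgebra filtration of $B$ whose zeroth term is exactly $\Bbbk 1_R\otimes H$, and then to invoke the standard fact that the coradical of a coalgebra is contained in the zeroth term of any coalgebra filtration. Concretely, for $n\geq 0$ I would set
\[
B^{(n)}:=\sum_{0\leq i\leq n}\Bbbk x^{[i]}\otimes H .
\]
Since $R$ has basis $x^{[0]}=1_R,x^{[1]},\dots ,x^{[N-1]}$, the subspaces $B^{(n)}$ form an increasing exhaustive filtration of $B=R\otimes H$ with $B^{(0)}=\Bbbk 1_R\otimes H$ and $B^{(N-1)}=B$, and $\varepsilon_B$ vanishes on $x^{[i]}\otimes H$ for $i\geq 1$.

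The key computation is to evaluate $\Delta_B$ on the basis of $B$. Using $\rho(x^{[m]})=g^m\otimes x^{[m]}$ and $\Delta_R(x^{[n]})=\sum_{0\leq i\leq n}\beta(i,n)\,x^{[i]}\otimes x^{[n-i]}$ from Theorem \ref{teo:R}(i), the comultiplication of the bosonization in Theorem \ref{teo:RsmashH} becomes
\[
\Delta_B(x^{[n]}\otimes h)=\sum_{0\leq i\leq n}\beta(i,n)\,\omega_H^{-1}(g^i\otimes g^{n-i}\otimes h_1)\,\bigl(x^{[i]}\otimes g^{n-i}h_2\bigr)\otimes\bigl(x^{[n-i]}\otimes h_3\bigr),
\]
where I have used that the iterated coactions $r^1_{-1}$ and $r^2_{-2}\otimes r^2_{-1}$ appearing in that formula are, for $r=x^{[n]}$, just powers of the single grouplike $g$ (so $r^2_{-2}\otimes r^2_{-1}=g^{n-i}\otimes g^{n-i}$). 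Each summand lies in $(x^{[i]}\otimes H)\otimes(x^{[n-i]}\otimes H)\subseteq B^{(i)}\otimes B^{(n-i)}$, and since $i+(n-i)=n$ we conclude $\Delta_B(B^{(n)})\subseteq\sum_{p+q=n}B^{(p)}\otimes B^{(q)}$. Thus $\{B^{(n)}\}$ is a coalgebra filtration; the case $n=0$ reproduces the fact, already recorded in Remark \ref{rem: with a projection}, that $\sigma\colon H\to B$, $h\mapsto 1_R\otimes h$, is a coalgebra map and hence $B^{(0)}$ is a subcoalgebra.

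Finally I would apply the well-known result that any coalgebra filtration $\{B^{(n)}\}$ satisfies $B_0\subseteq B^{(0)}$, which yields $B_0\subseteq\Bbbk 1_R\otimes H$ as claimed. I expect no serious obstacle here: the content is entirely the coproduct computation above, and the only delicate point is the bookkeeping of the two iterated coactions of $x^{[n]}$ in the bosonization formula. Because every coaction in sight is by a power of $g$, these collapse to scalars times powers of $g$, so the verification is formal once the coproduct is written out.
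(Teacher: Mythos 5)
Your argument for part (ii) is correct and is essentially the paper's own proof of that part: your filtration $B^{(n)}=\sum_{0\leq i\leq n}\Bbbk x^{[i]}\otimes H$ is exactly the paper's $B_{[n]}:=R_{[n]}\otimes H$ with $R_{[n]}:=\oplus_{0\leq a\leq n}\Bbbk x^{[a]}$, and both arguments conclude by the same standard fact (Sweedler, Proposition 11.1.1) that the coradical of a filtered coalgebra lies in the zeroth filtration term. Your explicit computation
\begin{equation*}
\Delta_B(x^{[n]}\otimes h)=\sum_{0\leq i\leq n}\beta(i,n)\,\omega_H^{-1}\left(g^i\otimes g^{n-i}\otimes h_1\right)\left(x^{[i]}\otimes g^{n-i}h_2\right)\otimes\left(x^{[n-i]}\otimes h_3\right)
\end{equation*}
is correct (the iterated coaction on $x^{[n-i]}$ is indeed $g^{n-i}\otimes g^{n-i}\otimes x^{[n-i]}$), and it makes explicit what the paper dispatches with ``by the structure maps for $B$ in Theorem \ref{teo:RsmashH}.''

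The genuine gap is part (i): your proposal does not prove it, it \emph{uses} it. You cite $\rho(x^{[m]})=g^m\otimes x^{[m]}$ and $\Delta_R(x^{[n]})=\sum_{i}\beta(i,n)\,x^{[i]}\otimes x^{[n-i]}$ ``from Theorem \ref{teo:R}(i)'' as inputs, which is circular as a proof of the full statement. What is missing is the construction of $R$ itself. In the paper this is done by taking the tensor algebra $T(V)$ of the one-dimensional Yetter--Drinfeld module $V=\Bbbk v$ attached to the datum, whose bialgebra structure in ${_{H}^{H}\mathcal{YD}}$ (coaction, action via $\chi_{[n]}$, product (\ref{form: prodTV}), coproduct (\ref{form: coprodTV}) with coefficients $\beta(i,n)$) is computed in Proposition \ref{prop: structure of T(V)}, and then passing to $R=T(V)/I$, where $I$ is the two-sided ideal generated by $v^{[N]}$. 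The nontrivial points are that $I$ is a subobject of $T(V)$ in ${_{H}^{H}\mathcal{YD}}$ and satisfies the coideal-type conditions of Proposition \ref{pro:BialgQuotient}: for $v^{[N]}$ this uses that $\binom{N}{i}_{q}=0$ for $i\neq 0,N$ because $q$ is a primitive $N$-th root of unity, and for $v^{[n]}$ with $n>N$ it uses (\ref{form: prodTV}) together with Lemma \ref{lem:generated}; Proposition \ref{pro:BialgQuotient} then yields the quotient bialgebra whose structure constants are exactly those listed in (i). Without this construction (or an equivalent one), part (i) --- and hence the theorem --- is not established.
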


\begin{proof}
(i) Take $R:=T\left( V\right) /I$ as above   and set $x^{[n]}:=v^{[n]}+I$.
\par (ii) For $0 \leq n <N$,
let $R_{\left[ n\right] }:=\oplus _{0\leq a\leq n}\Bbbk x^{\left[ a\right]
}. $ Then, by the structure maps for $R$ in (i), $R_{\left[ n\right] }$ is
a subobject of $R$ in ${_{H}^{H}\mathcal{YD}}$ such that
\begin{equation*}
\Delta _{R}\left( R_{\left[ n\right] }\right) \subseteq \sum_{0\leq i\leq n}R_{%
\left[ i\right] }\otimes R_{\left[ n-i\right] }.
\end{equation*}%
Set $B_{\left[ n\right] }:=R_{\left[ n\right] }\otimes H.$
By the structure maps for $B$ in Theorem \ref{teo:RsmashH}, we see %

\begin{equation*}
\Delta _{B}\left( B_{\left[ n\right] }\right) \subseteq \sum_{0\leq i\leq n}B_{%
\left[ i\right] }\otimes B_{\left[ n-i\right] }.
\end{equation*}%
Since $B=\cup _{n\in\mathbb{N}}B_{\left[ n\right] },$ we have proved that $B$ is a filtered coalgebra so
that, by \cite[Proposition 11.1.1, page 226]{Sw},
\begin{equation*}
B_{0}\subseteq B_{\left[ 0\right] }=R_{\left[ 0\right] }\otimes H=\Bbbk x^{%
\left[ 0\right] }\otimes H=\Bbbk 1_{R}\otimes H.
\end{equation*}%
\end{proof}

Note that the result in the previous theorem still holds formally if ${q}=1$ but is not so interesting,
 since, in this case $R$ collapses to the base field $\Bbbk$.

\begin{definition}
 Let $\left( \left( H,\omega \right) ,g,\chi \right) $ be a
quasi-$YD$ datum for $q \neq 1$, a primitive $N$-th root of unity. The bialgebra $R=R\left( \left( H,\omega \right) ,g,\chi \right)
$ of the previous theorem will be called a \emph{quantum line} for the given datum.
\end{definition}

\begin{proposition}
\label{pro:SR}The bialgebra $R$ from Theorem \ref{teo:RsmachHthin} is a Hopf
algebra in ${_{H}^{H}\mathcal{YD}}$ with bijective antipode $%
S_{R}:R\rightarrow R$ defined by
\begin{equation*}
S_{R}\left( x^{\left[ n\right] }\right) =\left( -1\right) ^{n}\chi \left(
g\right) ^{\frac{n\left( n-1\right) }{2}}x^{\left[ n\right] }\text{ for }%
0\leq n\leq N-1.
\end{equation*}
\end{proposition}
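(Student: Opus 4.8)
The plan is to use that, in the prebraided monoidal category ${_H^H\mathcal{YD}}$, an antipode for the bialgebra $R$ is nothing but a two-sided convolution inverse of $\mathrm{id}_R$; that is, a morphism $S_R:R\to R$ in ${_H^H\mathcal{YD}}$ satisfying $m_R(S_R\otimes R)\Delta_R=u_R\varepsilon_R=m_R(R\otimes S_R)\Delta_R$. First I would check that the proposed map is a legitimate morphism in the category: since $S_R$ acts as the scalar $\lambda_n:=(-1)^n\chi(g)^{n(n-1)/2}$ on the one-dimensional Yetter--Drinfeld submodule $\Bbbk x^{[n]}$ (with coaction $g^n\otimes x^{[n]}$ and action $\chi_{[n]}$), and multiplication by a scalar is automatically $H$-colinear and $H$-linear, the map $S_R$ is a morphism in ${_H^H\mathcal{YD}}$ by construction. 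Note that no braiding enters the antipode axioms themselves, so the verification reduces to a computation at the level of underlying vector spaces.

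Next I would compute $m_R(S_R\otimes R)\Delta_R(x^{[n]})$ directly. Using $\Delta_R(x^{[n]})=\sum_{0\leq i\leq n}\beta(i,n)\,x^{[i]}\otimes x^{[n-i]}$ and the product formula of Theorem~\ref{teo:R} (the Iverson bracket is $1$ because $n\leq N-1$), the result is a scalar multiple of $x^{[n]}$. The key simplification is that the reassociator factor $\prod_{0\leq j\leq i-1}\omega(g\otimes g^{j}\otimes g^{n-i})$ inside $\beta(i,n)$ exactly cancels the factor $\prod_{0\leq k\leq i-1}\omega^{-1}(g\otimes g^{k}\otimes g^{n-i})$ produced by $m_R(x^{[i]}\otimes x^{[n-i]})$, leaving only the $q$-binomial coefficients. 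With $q=\chi(g)$ one is reduced to
\begin{equation*}
m_R(S_R\otimes R)\Delta_R(x^{[n]})=\Big(\sum_{0\leq i\leq n}\binom{n}{i}_q(-1)^i q^{i(i-1)/2}\Big)x^{[n]}.
\end{equation*}

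The crux is then the $q$-binomial identity $\sum_{0\leq i\leq n}\binom{n}{i}_q(-1)^i q^{i(i-1)/2}=\delta_{n,0}$, which I would obtain from the finite Gaussian binomial theorem $\prod_{0\leq k\leq n-1}(1+q^{k}t)=\sum_{0\leq i\leq n}q^{i(i-1)/2}\binom{n}{i}_q t^{i}$ evaluated at $t=-1$: the $k=0$ factor becomes $1-1=0$, so the product vanishes for $n\geq 1$, while the empty product gives $1$ for $n=0$. This yields $m_R(S_R\otimes R)\Delta_R=u_R\varepsilon_R$. For the other identity the same cancellation of $\omega$-factors produces $\sum_{0\leq i\leq n}\binom{n}{i}_q\lambda_{n-i}$, and reindexing by $j=n-i$ together with the symmetry $\binom{n}{n-j}_q=\binom{n}{j}_q$ returns the identical sum, so $m_R(R\otimes S_R)\Delta_R=u_R\varepsilon_R$ as well.

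Finally, bijectivity is immediate: since $q\neq 0$, each eigenvalue $\lambda_n$ is nonzero, so the diagonal map $S_R$ is invertible with inverse $x^{[n]}\mapsto\lambda_n^{-1}x^{[n]}$, which is again a scalar on each Yetter--Drinfeld line and hence a morphism in ${_H^H\mathcal{YD}}$. I expect the only genuine obstacle to be the careful $\omega$-bookkeeping and the clean statement of the $q$-identity; both antipode conditions collapse to the same vanishing sum once the reassociator factors cancel, so the proof is short after those two points are secured.
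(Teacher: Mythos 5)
Your proof is correct and follows essentially the same route as the paper's: both verifications rest on the cancellation $\beta(i,n)\prod_{0\leq j\leq i-1}\omega^{-1}\left( g\otimes g^{j}\otimes g^{n-i}\right)=\binom{n}{i}_{q}$ and the Gaussian identity $\sum_{0\leq i\leq n}\binom{n}{i}_{q}\left(-1\right)^{i}q^{\frac{i(i-1)}{2}}=\delta_{n,0}$ (the paper cites \cite[Proposition IV.2.7]{Kassel-Quantum}), with the second antipode axiom handled by the same reindexing and $q$-binomial symmetry. The only differences are cosmetic: the paper additionally motivates the formula by first deriving it from the expected braided antimultiplicativity $S_{R}m_{R}=m_{R}\left(S_{R}\otimes S_{R}\right)c_{R,R}$, a step that is not logically required, while you add the explicit (and correct, though left implicit in the paper) check that $S_{R}$, being a scalar on each one-dimensional Yetter--Drinfeld component, is a morphism in ${_{H}^{H}\mathcal{YD}}$.
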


\begin{proof}
Consider the basis $\left( x^{\left[ n\right] }\right) _{0\leq n\leq N-1}$
of the bialgebra $R=R\left( \left( H,\omega \right) ,g,\chi \right) $ in ${%
_{H}^{H}\mathcal{YD}}$. We want to define a linear map $S_{R}:R\rightarrow R$
on the basis which a posteriori is expected to be antimultiplicative in ${%
_{H}^{H}\mathcal{YD}}$. Set $S_{R}\left( x^{\left[ 1\right] }\right) =-x^{%
\left[ 1\right] }.$ Then, for $1<n\leq N-1,$ we have
\begin{eqnarray*}
S_{R}\left( x^{\left[ n\right] }\right) &=&S_{R}m_{R}\left( x^{\left[ 1%
\right] }\otimes x^{\left[ n-1\right] }\right) =m_{R}\left( S_{R}\otimes
S_{R}\right) c_{R,R}\left( x^{\left[ 1\right] }\otimes x^{\left[ n-1\right]
}\right) \\
&=&m_{R}\left( S_{R}\otimes S_{R}\right) \left( x_{-1}^{\left[ 1\right]
}\vartriangleright x^{\left[ n-1\right] }\otimes x_{0}^{\left[ 1\right]
}\right) \\
&=&m_{R}\left( S_{R}\otimes S_{R}\right) \left( g\vartriangleright x^{\left[
n-1\right] }\otimes x^{\left[ 1\right] }\right) =\chi _{\left[ n-1\right]
}\left( g\right) m_{R}\left( S_{R}\otimes S_{R}\right) \left( x^{\left[ n-1%
\right] }\otimes x^{\left[ 1\right] }\right) \\
&=&\chi _{\left[ n-1\right] }\left( g\right) S_{R}\left( x^{\left[ n-1\right]
}\right) S_{R}\left( x^{\left[ 1\right] }\right) =-\chi _{\left[ n-1\right]
}\left( g\right) S_{R}\left( x^{\left[ n-1\right] }\right) x^{\left[ 1\right]
},
\end{eqnarray*}%
where $c_{R,R}:R\otimes R\rightarrow R\otimes R$ denotes the braiding of ${%
_{H}^{H}\mathcal{YD}}$ evaluated in $R$. Let us check that this forces%
\begin{equation*}
S_{R}\left( x^{\left[ n\right] }\right) =\left( -1\right) ^{n}\chi \left(
g\right) ^{\frac{n\left( n-1\right) }{2}}x^{\left[ n\right] }\text{ for }%
0\leq n\leq N-1.
\end{equation*}%
For $n=0,1$ the formula trivially holds. Let $n$ with $1<n\leq N-1$ such
that the formula holds for $n-1.$ Then%
\begin{eqnarray*}
S_{R}\left( x^{\left[ n\right] }\right) &=&-\chi _{\left[ n-1\right] }\left(
g\right) S_{R}\left( x^{\left[ n-1\right] }\right) x^{\left[ 1\right] } \\
&=&-\left[ \prod\limits_{0\leq i\leq n-2}\omega \left( g\otimes g^{i}\otimes
g\right) \right] \chi \left( g\right) ^{n-1}\left( -1\right) ^{n-1}\chi
\left( g\right) ^{\frac{\left( n-1\right) \left( n-2\right) }{2}}x^{\left[
n-1\right] }x^{\left[ 1\right] } \\
&=&\left[ \prod\limits_{0\leq i\leq n-2}\omega \left( g\otimes g^{i}\otimes
g\right) \right] \left( -1\right) ^{n}\chi \left( g\right) ^{\frac{n\left(
n-1\right) }{2}}x^{\left[ n-1\right] }x^{\left[ 1\right] } \\
&=&\left[ \prod\limits_{0\leq i\leq n-2}\omega \left( g\otimes g^{i}\otimes
g\right) \right] \left( -1\right) ^{n}\chi \left( g\right) ^{\frac{n\left(
n-1\right) }{2}}\prod\limits_{0\leq i\leq n-2}\omega ^{-1}\left( g\otimes
g^{i}\otimes g\right) x^{\left[ n\right] } \\
&=&\left( -1\right) ^{n}\chi \left( g\right) ^{\frac{n\left( n-1\right) }{2}%
}x^{\left[ n\right] }.
\end{eqnarray*}%
We have%
\begin{eqnarray*}
S_{R}\left( \left( x^{\left[ n\right] }\right) ^{1}\right) \left( x^{\left[ n%
\right] }\right) ^{2} &=&\sum_{0\leq i\leq n}\beta (i,n)S_{R}\left(
x^{[i]}\right) x^{[n-i]} \\
&=&\sum_{0\leq i\leq n}\beta (i,n)\left( -1\right) ^{i}\chi \left( g\right)
^{\frac{i\left( i-1\right) }{2}}x^{\left[ i\right] }x^{[n-i]} \\
&=&\sum_{0\leq i\leq n}\beta (i,n)\left( -1\right) ^{i}\chi \left( g\right)
^{\frac{i\left( i-1\right) }{2}}\prod\limits_{0\leq j\leq i-1}\omega
^{-1}\left( g\otimes g^{j}\otimes g^{n-i}\right) x^{\left[ n\right] }.
\end{eqnarray*}%
{But
\begin{equation}\label{eqn: beta}
 \beta(i,n)
  \prod\limits_{0\leq j\leq i-1}\omega ^{-1}\left( g\otimes g^{j}\otimes g^{n-i}\right)
= \binom{n}{i}_{q}
\end{equation}
so that
\begin{displaymath}
S_{R}\left( \left( x^{\left[ n\right] }\right) ^{1}\right) \left( x^{\left[ n%
\right] }\right) ^{2} = \left[ \sum_{0\leq i\leq n}\binom{n}{i}_{q}\left( -1\right) ^{i}q^{\frac{%
i\left( i-1\right) }{2}}\right] x^{\left[ n\right] }.
\end{displaymath}
}
%&=&\sum_{0\leq i\leq n}\binom{n}{i}_{q}\left[ \prod\limits_{0\leq t\leq
%i-1}\omega \left( g\otimes g^{t}\otimes g^{n-i}\right) \right] \left(
%-1\right) ^{i}q^{\frac{i\left( i-1\right) }{2}}\left[ \prod\limits_{0\leq
%j\leq i-1}\omega ^{-1}\left( g\otimes g^{j}\otimes g^{n-i}\right) \right] x^{%
%\left[ n\right] } \\
%&=&\left[ \sum_{0\leq i\leq n}\binom{n}{i}_{q}\left( -1\right) ^{i}q^{\frac{%
%i\left( i-1\right) }{2}}\right] x^{\left[ n\right] }.

By \cite[Proposition IV.2.7]{Kassel-Quantum} we have that%
\begin{equation*}
\sum_{0\leq i\leq n}\binom{n}{i}_{q}\left( -1\right) ^{i}q^{\frac{i\left(
i-1\right) }{2}}a^{n-i}X^{i}=\prod\limits_{0\leq i\leq n-1}\left(
a-q^{i}X\right)
\end{equation*}%
for any scalar $a$ and variable $X.$ If we take $a=1$ and evaluate this
polynomial in $X=1,$ we get $\sum_{0\leq i\leq n}\binom{n}{i}_{q}\left(
-1\right) ^{i}q^{\frac{i\left( i-1\right) }{2}}=\delta _{n,0}.$ Hence%
\begin{equation*}
S_{R}\left( \left( x^{\left[ n\right] }\right) ^{1}\right) \left( x^{\left[ n%
\right] }\right) ^{2}=\delta _{n,0}x^{\left[ n\right] }=\delta _{n,0}x^{%
\left[ 0\right] }=\varepsilon _{R}\left( x^{\left[ n\right] }\right) 1_{R}.
\end{equation*}%
On the other hand we have%
\begin{eqnarray*}
&&
\left( x^{\left[ n\right] }\right) ^{1}S_{R}\left( \left( x^{\left[ n%
\right] }\right) ^{2}\right) \\
&=&
\sum_{0\leq i\leq n}\beta (i,n)x^{[i]}S_{R}\left( x^{[n-i]}\right) \\
&=&
\sum_{0\leq w\leq n}\beta (n-w,n)x^{[n-w]}S_{R}\left( x^{[w]}\right) \\
&=&
\sum_{0\leq w\leq n}\beta (n-w,n)\left( -1\right) ^{w}\chi \left(
g\right) ^{\frac{w\left( w-1\right) }{2}}x^{[n-w]}x^{[w]} \\
&\overset{\eqref{eqn: beta}}{=}&
%\sum_{0\leq w\leq n}\binom{n}{n-w}_{q}\left[ \prod\limits_{0\leq t\leq
%n-w-1}\omega \left( g\otimes g^{t}\otimes g^{w}\right) \right] \left(
%-1\right) ^{w}\chi \left( g\right) ^{\frac{w\left( w-1\right) }{2}}\left[
%\prod\limits_{0\leq j\leq n-w-1}\omega ^{-1}\left( g\otimes g^{j}\otimes
%g^{w}\right) \right] x^{\left[ n\right] } \\
%&=&
\sum_{0\leq w\leq n}\binom{n}{w}_{q}\left( -1\right) ^{w}q^{\frac{w\left(
w-1\right) }{2}}x^{\left[ n\right] }=\delta _{n,0}x^{\left[ n\right]
}=\delta _{n,0}x^{\left[ 0\right] }=\varepsilon _{R}\left( x^{\left[ n\right]
}\right) 1_{R}.
\end{eqnarray*}%
We note that $S_{R}:R\rightarrow R$ is trivially bijective.
\end{proof}

Recall the definition of a morphism of quasi-$YD$ data from Definition \ref{def: morphism YD data}.
Note that if $\varphi:((H,\omega), g, \chi) \rightarrow ((L,\alpha), \ell, \xi) $ is a morphism of quasi-$YD$ data,
 with $((H,\omega), g,\chi)$  a quasi-$YD$ datum for $q$ then
   $((L,\alpha),\ell, \xi)$ is also a quasi-$YD$ datum for $q$
  since $  \xi(\ell) = \xi \varphi(g)= \chi(g)$.
  It follows easily from equation (\ref{form:chin}) that $\xi_{[n]}  \varphi = \chi_{[n]}$ for all $n \geq 1$.
  The proof of the next proposition is straightforward and so the details are left to the reader.

\begin{proposition}
\label{pro:morphdatum}Let $\varphi :\left( \left( H,\omega \right) ,g,\chi
\right) \rightarrow \left( \left( L,\alpha \right) ,l,\xi \right) $ be a
morphism of quasi-$YD$ data with $q:=\chi \left( g\right) $
a primitive $N$-th root of unity , $N>0$. Let $\left( x^{\left[ n\right]
}\right) _{0\leq n\leq N-1}$ be the canonical basis for $R_{H}:=R\left(
\left( H,\omega \right) ,g,\chi \right) $ and   $\left( y^{\left[ n\right]
}\right) _{0\leq n\leq N-1}$  the canonical basis for $R_{L}:=R\left(
\left( L,\alpha \right) ,l,\xi \right) .$ Consider the $\Bbbk $-linear
isomorphism $f:R_{H}\rightarrow R_{L}$ mapping $x^{\left[ n\right] }$ to $y^{%
\left[ n\right] }$ for all $n\in \left\{ 0,\ldots ,N-1\right\} .$ Then%
\begin{gather*}
\rho
_{R_{L}}  f = \left( \varphi \otimes f\right)  \rho _{R_{H}},\qquad
\mu _{R_{L}}  \left( \varphi \otimes f\right) =f  \mu _{R_{H}}, \\
1_{R_{L}} =f\left( 1_{R_{H}}\right) ,\qquad m_{R_{L}}  \left( f\otimes
f\right) =f m_{R_{H}}, \\
\Delta _{R_{L}}  f =\left( f\otimes f\right)   \Delta _{R},\qquad
\varepsilon _{R_{L}}  f=\varepsilon _{R_{H}}.
\end{gather*}%
Moreover $f\otimes \varphi :R_{H}\#H\rightarrow R_{L}\#L$ is a dual
quasi-bialgebra homomorphism.
\end{proposition}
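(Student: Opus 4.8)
The plan is to verify the six displayed identities for $f$ by evaluating on the basis $\left(x^{[n]}\right)_{0\leq n\leq N-1}$, and then to use them to check that $f\otimes\varphi$ respects each of the five structure maps of the bosonization listed in Theorem \ref{teo:RsmashH}. Throughout I will use that $\varphi$ is a morphism of dual quasi-bialgebras, so that $\varphi(g^{n})=\ell^{n}$, $\alpha\circ\varphi^{\otimes 3}=\omega$ and $\varphi u_{H}=u_{L}$, together with $\xi\varphi=\chi$ and the two observations recorded just before the statement: that $\left((L,\alpha),\ell,\xi\right)$ is again a quasi-$YD$ datum for the same $q$, and that $\xi_{[n]}\varphi=\chi_{[n]}$ for all $n\geq 1$.

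For the comodule and module identities I evaluate on the basis. Since $\rho_{R_{H}}\left(x^{[n]}\right)=g^{n}\otimes x^{[n]}$, one gets $\left(\varphi\otimes f\right)\rho_{R_{H}}\left(x^{[n]}\right)=\ell^{n}\otimes y^{[n]}=\rho_{R_{L}}\left(y^{[n]}\right)=\rho_{R_{L}}f\left(x^{[n]}\right)$ using $\varphi(g^{n})=\ell^{n}$; and $\mu_{R_{L}}(\varphi\otimes f)(h\otimes x^{[n]})=\xi_{[n]}(\varphi(h))\,y^{[n]}=\chi_{[n]}(h)\,y^{[n]}=f\mu_{R_{H}}(h\otimes x^{[n]})$ using $\xi_{[n]}\varphi=\chi_{[n]}$. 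The unit and counit identities hold because $f\left(x^{[0]}\right)=y^{[0]}$ and $\varepsilon_{R_{L}}\left(y^{[n]}\right)=\delta_{n,0}=\varepsilon_{R_{H}}\left(x^{[n]}\right)$. For multiplication, the formula in Theorem \ref{teo:R}(i) shows that $f m_{R_{H}}\left(x^{[a]}\otimes x^{[b]}\right)$ and $m_{R_{L}}(f\otimes f)\left(x^{[a]}\otimes x^{[b]}\right)$ differ only by the scalar factors $\prod_{0\leq i\leq a-1}\omega^{-1}(g\otimes g^{i}\otimes g^{b})$ versus $\prod_{0\leq i\leq a-1}\alpha^{-1}(\ell\otimes\ell^{i}\otimes\ell^{b})$; these agree because $\alpha(\ell\otimes\ell^{i}\otimes\ell^{b})=\alpha(\varphi(g)\otimes\varphi(g^{i})\otimes\varphi(g^{b}))=\omega(g\otimes g^{i}\otimes g^{b})$, and similarly for the inverses. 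For comultiplication, the same identity $\alpha(\ell\otimes\ell^{j}\otimes\ell^{n-i})=\omega(g\otimes g^{j}\otimes g^{n-i})$ together with the equality of the $q$-binomials (both data are for the same $q$) gives that the coefficient $\beta(i,n)$ of (\ref{form:n/i}) computed from $(\omega,g)$ equals the one computed from $(\alpha,\ell)$; hence $(f\otimes f)\Delta_{R_{H}}\left(x^{[n]}\right)=\Delta_{R_{L}}f\left(x^{[n]}\right)$.

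Finally, to see that $f\otimes\varphi$ is a morphism of dual quasi-bialgebras $R_{H}\#H\to R_{L}\#L$, I substitute $f\otimes\varphi$ into each structure map $m_{B},u_{B},\Delta_{B},\varepsilon_{B},\omega_{B}$ of Theorem \ref{teo:RsmashH}. The counit and reassociator are immediate, since $\varepsilon_{B}$ factors through $\varepsilon_{R}\otimes\varepsilon_{H}$ and $\omega_{B}$ through $\varepsilon_{R}^{\otimes 3}\otimes\omega_{H}$, and the identities $\varepsilon_{R_{L}}f=\varepsilon_{R_{H}}$ and $\alpha\circ\varphi^{\otimes 3}=\omega$ already established give the result. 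The unit and comultiplication identities follow by inserting the six identities for $f$ together with $\varphi(g^{n})=\ell^{n}$ and $\omega=\alpha\circ\varphi^{\otimes 3}$. The one genuinely lengthy verification is compatibility with $m_{B}$, whose defining formula carries four reassociator factors and the term $r_{0}\cdot_{R}(h_{3}\vartriangleright s_{0})_{0}$; here I match the expression factor-by-factor, using that $f$ intertwines the coactions with $\varphi$ (so each argument of $\omega_{H}^{\pm 1}$ is transported to the corresponding argument of $\alpha^{\pm 1}$), that $f$ intertwines $\mu_{R}$ and $m_{R}$ with $\varphi$, and that $\alpha\circ\varphi^{\otimes 3}=\omega$. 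I expect this bookkeeping inside the multiplication formula to be the main, though entirely routine, obstacle; every other identity reduces to an immediate substitution.
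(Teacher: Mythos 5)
Your proposal is correct and follows exactly the route the paper intends: the paper states the two key facts immediately before the proposition (that $\left((L,\alpha),l,\xi\right)$ is a quasi-$YD$ datum for the same $q$, and that $\xi_{[n]}\varphi=\chi_{[n]}$) and then leaves the routine basis-by-basis verification to the reader, which is precisely what you carry out. Your factor-by-factor matching in the structure maps of Theorem \ref{teo:RsmashH}, using $\alpha\circ\varphi^{\otimes 3}=\omega$, the intertwining of (co)actions, and multiplicativity of $\varphi$, fills in those details correctly.
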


\section{Quasi-Yetter-Drinfeld data for bosonizations}\label{sec: qYD for bosonizations}

In this section we consider quasi-$YD$ data  for bosonizations $R \# H$.
In the next lemma, we assume that we have a bosonization $B= R \# H$ with a quasi-$YD$ datum and we find
that this yields a quasi-$YD$ datum for $H$.

\begin{lemma}
\label{lem:qYdOnSmash} For $(H, \omega)$   a dual quasi-bialgebra and   $R$   a
bialgebra in ${_{H}^{H}\mathcal{YD}}$, consider the dual quasi-bialgebra $
B:=R\#H$, the bosonization of $R$ by $H$, defined in Theorem \ref{teo:RsmashH}.
 Assume that $B_{0}\subseteq
\Bbbk 1_{R}\otimes H$ and let $\left( (B, \omega_B),g,\chi _{B}\right) $ be a
quasi-$YD$ datum. Then there exists $c\in G\left( H\right) $ such
that $g=1_{R}\# c$, and   $\left( (H,\omega),c,\chi _{B} \sigma \right) $ is
a quasi-$YD$ datum where $\sigma: H \hookrightarrow B$ is the inclusion.
 Moreover, for every $r\in R,h\in H$ we have%
\begin{eqnarray}
\chi _{B}\left( r\# h\right) &=&\omega _{H}^{-1}\left( r_{-2}\otimes
h_{1}\otimes c\right) \chi _{B}\left( 1 \# h_{2}\right) \omega
_{H}\left( r_{-1}\otimes c\otimes h_{3}\right) \chi _{B}\left( r_{0}\#
1 \right) ,  \label{form:Chi1}
\\
\chi _{B}\left( r\cdot _{R}s\# 1 \right) &=&\omega _{H}^{-1}\left(
r_{-1}\otimes s_{-1}\otimes c\right) \chi _{B}\left( s_{0}\#
1_{H}\right) \chi _{B}\left( r_{0}\# 1 \right) ,  \label{form:Chi2}
\\
\chi _{B}\left( r\# h_{1}\right) ch_{2} &=&\left( r_{-1}h_{1}\right)
\chi _{B}\left( r_{0}\# h_{2}\right) c,  \label{form:iter1}
\\
\chi _{B}\left( r^{1}\# r_{-1}^{2}\right) c\vartriangleright r_{0}^{2}
&=&\omega _{H}^{-1}(r_{-1}^{1}\otimes r_{-1}^{2}\otimes c)r_{0}^{1}\chi
_{B}\left( r_{0}^{2}\# 1_{H}\right) .  \label{form:iter2}
\end{eqnarray}%

\end{lemma}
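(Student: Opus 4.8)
The plan is to pin down $g$ first, then transport the quasi-$YD$ axioms from $B$ to $H$ along the structural maps $\sigma,\pi$, and finally to read off the four explicit formulas by feeding well-chosen products into the axioms \eqref{form:chiProd} and \eqref{form: YDdatum} written for $\chi_B$.

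First I would locate $g$. Since $g\in G(B)$ and grouplikes lie in the coradical, the hypothesis $B_{0}\subseteq\Bbbk 1_{R}\otimes H$ gives $g=1_{R}\otimes c$ for a unique $c\in H$. Using the coproduct and counit of $B$ from Theorem \ref{teo:RsmashH}, together with $\Delta_{R}(1_{R})=1_{R}\otimes 1_{R}$, $\rho_{R}(1_{R})=1_{H}\otimes 1_{R}$ and the normalization of $\omega_{H}$, one computes $\Delta_{B}(1_{R}\otimes c)=(1_{R}\otimes c_{1})\otimes(1_{R}\otimes c_{2})$ and $\varepsilon_{B}(1_{R}\otimes c)=\varepsilon_{H}(c)$; comparing with $\Delta_{B}(g)=g\otimes g$ and $\varepsilon_{B}(g)=1$ forces $\Delta_{H}(c)=c\otimes c$ and $\varepsilon_{H}(c)=1$, i.e. $c\in G(H)$.

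Next I would establish the quasi-$YD$ datum $((H,\omega),c,\chi_{B}\sigma)$. Recall from Remark \ref{rem: with a projection} that $\sigma(h)=1_{R}\#h$ is a morphism of dual quasi-bialgebras with $\sigma(c)=g$, hence an algebra and a coalgebra map satisfying $\omega_{B}\circ\sigma^{\otimes 3}=\omega_{H}$. Writing $\chi_{H}:=\chi_{B}\sigma$, note $\chi_{H}(1_{H})=\chi_{B}(1_{B})=1$, so $\chi_{H}$ is unitary. Evaluating \eqref{form:chiProd} for $\chi_{B}$ on $\sigma(h)\sigma(l)=\sigma(hl)$, and replacing each $\sigma(h)_{i}$ by $\sigma(h_{i})$ and each $\omega_{B}$-value on $\sigma$-elements by the corresponding $\omega_{H}$-value, yields \eqref{form:chiProd} for $((H,\omega),c,\chi_{H})$. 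Likewise \eqref{form: YDdatum} for $\chi_{B}$ at $b=\sigma(h)$, after using that $\sigma$ is an injective algebra map, gives \eqref{form: YDdatum} for $\chi_{H}$. This proves the datum claim.

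The four identities come from feeding specific elements into the axioms for $\chi_{B}$ and exploiting that $\omega_{B}$ only sees $H$-components (its $R$-slots are counits). A direct computation from the multiplication of Theorem \ref{teo:RsmashH} gives $(r\#1)(1\#h)=r\#h$, $(r\#1)(s\#1)=r\cdot_{R}s\#1$, $(1\#c)(s\#1)=(c\vartriangleright s)\#c$, and $(t\#p)(1\#c)=\omega_{H}^{-1}(t_{-1}\otimes p_{1}\otimes c)\,t_{0}\#p_{2}c$. For \eqref{form:Chi1} I apply \eqref{form:chiProd} to $\chi_{B}((r\#1)(1\#h))$, and for \eqref{form:Chi2} to $\chi_{B}((r\#1)(s\#1))$; the projections $\pi$ of the coproduct legs of $r\#1$ (resp. $r,s$) reproduce the iterated coaction via \eqref{form:piIdpi} and the $H$-colinearity $r_{-1}\varepsilon_{R}(r_{0})=\varepsilon_{R}(r)1_{H}$, while $\pi$ of the outermost leg is $1_{H}$, which trims the three $\omega_{B}$-factors of \eqref{form:chiProd} down to the displayed form. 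For \eqref{form:iter1} I apply the algebra map $\pi$ to \eqref{form: YDdatum} at $b=r\#h$ and simplify $\sum\chi_{B}((r\#h)_{1})\,c\,\pi((r\#h)_{2})$ and $\sum\chi_{B}((r\#h)_{2})\,\pi((r\#h)_{1})\,c$ using $\pi(g)=c$, \eqref{form:piIdpi} and $\varepsilon_{R}$-colinearity; for \eqref{form:iter2} I apply $\mathrm{Id}_{R}\otimes\varepsilon_{H}$ to \eqref{form: YDdatum} at $b=r\#1$, using $\Delta_{B}(r\#1)=(r^{1}\#r_{-1}^{2})\otimes(r_{0}^{2}\#1)$ together with the product formulas above. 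The main obstacle is precisely this bookkeeping: verifying that normalization of $\omega_{H}$, the colinearity of $\varepsilon_{R}$, and the comodule coassociativity $(s_{-1})_{1}\otimes(s_{-1})_{2}\otimes s_{0}=s_{-2}\otimes s_{-1}\otimes s_{0}$ conspire to collapse the extra cocycle factors, the key being that the $\pi$-projection of the last coproduct leg of $r\#1$ is $1_{H}$.
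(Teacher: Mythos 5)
Your proposal is correct and follows essentially the same route as the paper: identify $g=1_R\# c$ via $G(B)\subseteq B_0\subseteq\Bbbk 1_R\otimes H$, transport the two quasi-$YD$ axioms along $\sigma$ (the paper packages the \eqref{form:chiProd} part as an application of Lemma \ref{lem:qYDquotient}, which you simply inline, and verifies \eqref{form: YDdatum} by applying $\pi$ where you use injectivity of $\sigma$ --- equivalent since $\pi\sigma=\mathrm{Id}_H$), and then derive \eqref{form:Chi1}--\eqref{form:iter2} by specializing \eqref{form:chiProd} to the products $(r\#1)(1\#h)$, $(r\#1)(s\#1)$ and by applying $\pi$, respectively $R\otimes\varepsilon_H$, to \eqref{form: YDdatum} at $r\#h$ and $r\#1$, exactly as in the paper. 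Your explicit product formulas in $B$ and the collapsing of the third cocycle factor by normalization match the paper's computation via \eqref{form:piIdpi}.
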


\begin{proof}
   Since $g \in G\left( B\right) \subseteq
B_{0}\subseteq \Bbbk \otimes H$, then   $
g=1_{R}\# c = \sigma(c)$ for some $c \in H$.  Thus $c =( \pi \sigma)(c) = \pi(g)$ and since the maps $\pi,\sigma$
from Section \ref{subsec: YD modules} are coalgebra maps and $g$ is grouplike, then $c$ is grouplike.
\par  In order to apply Lemma \ref{lem:qYDquotient} to conclude that $\left( (H,\omega),c,\chi _{B} \sigma \right) $
is a quasi-$YD$ datum, we must show that $c \chi_B \sigma(h_1) h_2 = h_1 \chi_B \sigma(h_2)c$ for all $h \in H$.
Since $((B, \omega_B), g, \chi_B)$ is a quasi-$YD$ datum, and so satisfies
  (\ref{form: YDdatum}), then   for every $r\in R,h\in H$,
\begin{equation*}
  g\chi _{B}\left( \left( r\#h\right) _{1}\right) \left( r\#h\right)
_{2}=\left( r\#h\right) _{1}\chi _{B}\left( \left( r\#h\right) _{2}\right) g.
\end{equation*}%

 If we let $r = 1_R$ in the equation above, and apply $\pi$ to both sides, we obtain
\begin{equation*}
c \chi_B(\sigma(h)_1) \pi(\sigma(h)_2) = \pi(\sigma(h)_1) \chi_B(\sigma(h)_2)c,
\end{equation*}
and since $\sigma, \pi$ are   coalgebra maps with $\pi \sigma$ the identity,
 then (\ref{form: YDdatum}) holds for $((H,\omega),c,\chi_B \sigma)$ and
by  Lemma \ref{lem:qYDquotient}, $((H,\omega),c,\chi_B \sigma)$ is a quasi-$YD$ datum.

\par Since $\omega _{B}=\omega _{H} \circ \pi^{ \otimes 3}
  $ then for all $x,y\in B $,  by (\ref{form:chiProd}) for the quasi-$YD$ datum for $B$,
we have that $\chi _{B}\left( xy\right)$ is:
\begin{eqnarray}
  &&  \label{form:chiProd-reduced}
   \omega
_{B}^{-1}\left( x_{1}\otimes y_{1}\otimes g\right) \chi _{B}\left(
y_{2}\right) \omega _{B}\left( x_{2}\otimes g\otimes y_{3}\right) \chi
_{B}\left( x_{3}\right) \omega _{B}^{-1}\left( g\otimes x_{4}\otimes
y_{4}\right)
\\
&=&\omega _{H}^{-1}\left( \pi \left( x_{1}\right) \otimes \pi \left(
y_{1}\right) \otimes c \right) \chi _{B}\left(
y_{2}\right) \omega _{H}\left( \pi \left( x_{2}\right) \otimes c \otimes \pi \left( y_{3}\right) \right) \chi _{B}\left(
x_{3}\right) \omega _{H}^{-1}\left( c \otimes \pi \left(
x_{4}\right) \otimes \pi \left( y_{4}\right) \right) \nonumber % \\
%&=& \omega _{H}^{-1}\left( \pi \left( x_{1}\right)
%_{1}\otimes \pi \left( y_{1}\right) \otimes c\right) \chi _{B}\left(
%y_{2}\right) \omega _{H}\left( \pi \left( x_{1}\right) _{2}\otimes c\otimes
%\pi \left( y_{3}\right) _{1}\right) \chi _{B}\left( x_{2}\right) \omega
%_{H}^{-1}\left( c\otimes \pi \left( x_{3}\right) \otimes \pi \left(
%y_{3}\right) _{2}\right).  \nonumber
\end{eqnarray}%

By (\ref{form:piIdpi}),
\begin{eqnarray}
 (\pi \otimes \pi \otimes B \otimes \pi) \Delta_B^3 (r \# 1) &=&
  r_{- 2} \otimes r_{-1} \otimes (r_0 \# 1 ) \otimes 1_H; \label{form: (13) for r times 1}\\
 (\pi \otimes B \otimes \pi \otimes \pi) \Delta_B^3 (1 \# h) &=&
 h_1 \otimes (1 \# h_2) \otimes h_3 \otimes h_4 \label{form: (13) for 1  times h},
\end{eqnarray}
and so, letting $x = r \# 1$ and $y = 1 \# h$, we have that $\chi_B(r \# h)$ is:
\begin{equation}
\omega _{H}^{-1}\left(   r_{-2}  \otimes h_1   \otimes c \right) \chi _{B}\left(
1 \# h_2 \right) \omega _{H}\left(r_{-1} \otimes c \otimes h_3   \right) \chi _{B}\left(
r_0 \# 1 \right) \omega _{H}^{-1}\left( c \otimes 1 \otimes h_4 \right),
\end{equation}
and since $\omega_H$ is normalized, (\ref{form:Chi1}) holds.

\par  Similarly $\chi_B(r \cdot_R s \# 1) = \chi_B((r \# 1)(s \# 1))$ and then, using
  (\ref{form: (13) for r times 1}) and (\ref{form:chiProd-reduced}), along with
  \begin{equation*}
  (\pi \otimes B \otimes \pi \otimes \pi) \Delta^3_B(s \# 1_H) = s_{-1} \otimes (s_0 \# 1) \otimes 1_H \otimes 1_H,
  \end{equation*}
  it is straightforward to verify  (\ref{form:Chi2}).

  \par Now we prove (\ref{form:iter1}).  Since $((B,\omega_B), g, \chi_B)$ satisfies (\ref{form: YDdatum}),  we have,
\begin{equation*}
\text{ }g\chi _{B}\left( \left( r\#h\right) _{1}\right) \left( r\#h\right)
_{2}=\left( r\#h\right) _{1}\chi _{B}\left( \left( r\#h\right) _{2}\right) g,%
\text{ for every }r\in R,h\in H.
\end{equation*}%
Recall from   Theorem \ref{teo:RsmashH} that
\begin{equation*}
\Delta_B(r \# h) = \omega_H^{-1}(r_{-1}^1 \otimes r_{-2}^2 \otimes h_1) r_0^1 \#
r^2_{-1} h_2 \otimes r_0^2 \# h_3,
\end{equation*}
so that applying $\pi$ to the left hand side of (\ref{form: YDdatum}) for $B$ we obtain:
\begin{equation*}
c \chi_B(r_0^1 \# r^2_{-1}h_2)\omega^{-1}_H( r_{-1}^1 \otimes r_{-2}^2 \otimes h_1) \varepsilon(r_0^2) h_3
= \chi_B(r \# h_1)ch_2.
\end{equation*}
Applying $\pi$ to the right hand side yields
\begin{equation*}
\omega_H^{-1}(r_{-1}^1 \otimes r_{-2}^2 \otimes h_1) \varepsilon(r_0^1)r_{-1}^2 h_2 \chi_B(r_0^2 \# h_3)c
= \chi_B(r_0 \# h_2)r_{-1}h_1 c,
\end{equation*}
and thus (\ref{form:iter1}) holds.

\par Equation (\ref{form:iter2}) is verified in a similar fashion.  Let $h = 1$ in the
left hand side of equation
(\ref{form: YDdatum}) for $B$ and then apply $R \otimes \varepsilon_H$ to obtain
\begin{equation*}
\chi_B((r \#1)_1)(R \otimes \varepsilon_H) [(1 \#c)(r \#1)_2]   =
\chi_B(r_0^1 \# r_{-1}^2)(R \otimes \varepsilon_H) [ c\triangleright r_0^2 \#c]
= \chi_B(r^1 \otimes r_{-1}^2)c\triangleright r_0^2.
\end{equation*}
Now let $h=1$  in the right hand side of (\ref{form: YDdatum}) for $B$ and apply $R \otimes \varepsilon_H$ to obtain
\begin{eqnarray*}
&&\chi_B((r \#1)_2) (R \otimes \varepsilon_H)[(r \#1)_1 (1 \#c)]
 = \chi_B(r_0^2 \#1) (R \otimes \varepsilon_H)[(r_0^1 \# r^2_{-1})(1 \#c)]  \\
&& = \chi_B(r_0^2 \#1)\omega^{-1}_H( (r_0^1)_{-1} \otimes (r^2_{-1})_1 \otimes c) (r_0^1)_0 \varepsilon_H((r^2_{-1})_2c)\\
&& = \chi_B(r_0^2 \#1)\omega^{-1}_H( r^1_{-1} \otimes r^2_{-1} \otimes c) r_0^1,
\end{eqnarray*}
and this finishes the proof of (\ref{form:iter2}).
\end{proof}

In the  next proposition we show how an arbitrary quasi-$YD$ datum on a bosonization
$R \#H$ where $R:= R((H,\omega_H),g_H,\chi_H)$ is related to $g_H$ and $\chi_H$.

\begin{proposition}
\label{pro:qydd}Let   $\left( \left( H,\omega _{H}\right) ,g_{H},\chi
_{H}\right) $ be a quasi-$YD$ datum for  a primitive $N$-th
root of unity $q$
 and let $R=R\left( \left( H,\omega _{H}\right) ,g_{H},\chi
_{H}\right) $ be the bialgebra in ${_{H}^{H}\mathcal{YD}}$ introduced in
Theorem \ref{teo:R}. Let $B=R\#H$, the bosonization of $R$ by $H$ and suppose that
 $\left(( B, \omega_B),g_{B},\chi _{B}\right) $ is a
quasi-$YD$ datum. Then there exists $d\in G\left( H\right) $ such
that $g_{B}=1_{R}\# d $. If
 $d\neq g_{H}d$, then
\begin{itemize}
\item[(i)]$ \chi _{B}\left( r\# h\right)= \varepsilon _{R}\left( r\right) \chi
_{B}\left( 1_{R}\# h\right)$ ,  for every $r\in R,h\in H$,
\item[(ii)]$ \chi _{B}\left( 1_{R}\# g_{H}\right) \chi _{H}\left( d\right) =1$,
\item[(iii)]$ dg_{H}= g_{H}d.$
\end{itemize}
\end{proposition}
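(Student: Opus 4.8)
The plan is to deduce all three assertions from Lemma \ref{lem:qYdOnSmash} by specializing its four identities \eqref{form:Chi1}--\eqref{form:iter2} to the primitive generator $x^{[1]}$ of $R$. First I would observe that $R=R((H,\omega_H),g_H,\chi_H)$ is the quantum line of Theorem \ref{teo:RsmachHthin}, so $B_0\subseteq \Bbbk 1_R\otimes H$ by part (ii) of that theorem. Thus the hypotheses of Lemma \ref{lem:qYdOnSmash} hold, giving a grouplike $d\in G(H)$ with $g_B=1_R\# d$ together with \eqref{form:Chi1}--\eqref{form:iter2} in which $c=d$. Throughout I would use the structure of $R$ from Proposition \ref{prop: structure of T(V)}: $\rho(x^{[n]})=g_H^n\otimes x^{[n]}$, $h\vartriangleright x^{[n]}=\chi_{[n]}(h)x^{[n]}$ with $\chi_{[0]}=\varepsilon$ and $\chi_{[1]}=\chi_H$, and $\Delta_R(x^{[1]})=x^{[1]}\otimes 1_R+1_R\otimes x^{[1]}$, together with $d\vartriangleright 1_R=1_R$, $d\vartriangleright x^{[1]}=\chi_H(d)x^{[1]}$, the normalization of $\omega_H$, and $\chi_B(1_B)=1$.

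For (ii), I would evaluate \eqref{form:iter2} at $r=x^{[1]}$. Expanding the two Sweedler summands $x^{[1]}\otimes 1_R$ and $1_R\otimes x^{[1]}$ of $\Delta_R(x^{[1]})$ and inserting the coactions above, the left-hand side collapses to $\chi_B(x^{[1]}\# 1_H)\,1_R+\chi_B(1_R\# g_H)\chi_H(d)\,x^{[1]}$, while the right-hand side (using normalization of $\omega_H$ and $\chi_B(1_B)=1$) becomes $x^{[1]}+\chi_B(x^{[1]}\# 1_H)\,1_R$. Since $1_R=x^{[0]}$ and $x^{[1]}$ are distinct basis vectors of $R$, comparing the coefficient of $x^{[1]}$ gives $\chi_B(1_R\# g_H)\chi_H(d)=1$, which is (ii). Part (iii) then follows formally: by Lemma \ref{lem:qYdOnSmash} the triple $((H,\omega),d,\chi_B\sigma)$ (with $\sigma\colon H\hookrightarrow B$ the inclusion) is a quasi-$YD$ datum, and (ii) shows $(\chi_B\sigma)(g_H)=\chi_B(1_R\# g_H)\neq 0$; applying Remark \ref{rem: YD datum}(iii) to this datum with $\ell=g_H$ yields $dg_H=g_Hd$. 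Note that neither (ii) nor (iii) uses the hypothesis $d\neq g_Hd$.

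Part (i) is where the hypothesis is essential, and it is the crux of the argument. I would first show $\chi_B(x^{[1]}\# 1_H)=0$: evaluating \eqref{form:iter1} at $r=x^{[1]}$, $h=1_H$ and using $\rho(x^{[1]})=g_H\otimes x^{[1]}$ gives $\chi_B(x^{[1]}\# 1_H)\,d=\chi_B(x^{[1]}\# 1_H)\,g_Hd$, that is, $\chi_B(x^{[1]}\# 1_H)(d-g_Hd)=0$; since $d$ and $g_Hd$ are distinct grouplikes of $H$ they are linearly independent, so $d-g_Hd\neq 0$ and hence $\chi_B(x^{[1]}\# 1_H)=0$. Next I would propagate this to all $1\leq n\leq N-1$ by induction using \eqref{form:Chi2}: writing $x^{[n]}$ as a nonzero scalar multiple of $x^{[n-1]}\cdot_R x^{[1]}$ via the product formula \eqref{form: prodTV}, equation \eqref{form:Chi2} expresses $\chi_B(x^{[n]}\# 1_H)$ as a scalar multiple of $\chi_B(x^{[1]}\# 1_H)\,\chi_B(x^{[n-1]}\# 1_H)$, so the vanishing of $\chi_B(x^{[1]}\# 1_H)$ forces $\chi_B(x^{[n]}\# 1_H)=0$ for every $n\geq 1$. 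Finally I would substitute $r=x^{[n]}$ into \eqref{form:Chi1}: since $\rho(x^{[n]})=g_H^n\otimes x^{[n]}$, that identity exhibits $\chi_B(x^{[n]}\# h)$ as a product of $\omega_H$-factors times $\chi_B(1_R\# h_2)$ times $\chi_B(x^{[n]}\# 1_H)$, which is $0$ for $n\geq 1$. As $\varepsilon_R(x^{[n]})=\delta_{n,0}$ and $(x^{[n]})_{0\leq n\leq N-1}$ is a basis of $R$, extending linearly gives exactly $\chi_B(r\# h)=\varepsilon_R(r)\chi_B(1_R\# h)$, which is (i). The only genuine difficulty is the Sweedler bookkeeping inside \eqref{form:iter1} and \eqref{form:iter2}; the conceptual heart is the linear independence of the grouplikes $d$ and $g_Hd$, which is precisely what the hypothesis $d\neq g_Hd$ supplies.
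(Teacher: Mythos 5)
Your proof is correct, and its skeleton is the same as the paper's: both obtain $d$ from Lemma \ref{lem:qYdOnSmash} (applicable because Theorem \ref{teo:RsmachHthin}(ii) gives $B_0\subseteq \Bbbk 1_R\otimes H$), kill $\chi_B(x^{[1]}\# 1_H)$ by specializing \eqref{form:iter1} at $r=x^{[1]}$, $h=1_H$ and invoking the linear independence of the distinct grouplikes $d$ and $g_Hd$, propagate the vanishing to all $x^{[n]}$ via \eqref{form:Chi2} and the product formula, deduce (i) from \eqref{form:Chi1}, and get (iii) by applying \eqref{form: YDdatum} to the datum $\left((H,\omega),d,\chi_B\sigma\right)$ once $\chi_B(1_R\# g_H)\neq 0$ is known. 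The one genuine difference is your treatment of (ii): the paper proves (i) first and then, when specializing \eqref{form:iter2} at $r=x^{[1]}$, uses $\chi_B(x^{[1]}\# 1_H)=0$ to discard the $1_R$-components on each side, so its proofs of (ii) and (iii) formally consume the hypothesis $d\neq g_Hd$. You instead keep those components and observe that the term $\chi_B(x^{[1]}\# 1_H)\,1_R$ appears on both sides of \eqref{form:iter2} and cancels, so comparing coefficients of the basis vector $x^{[1]}$ yields (ii) outright; the ingredients needed for this (normalization of $\omega_H$, unitarity of $\chi_B$, $d\vartriangleright 1_R=1_R$, $d\vartriangleright x^{[1]}=\chi_H(d)x^{[1]}$) are independent of the vanishing statement. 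This buys a mild strengthening of the proposition — (ii) and (iii) hold for any quasi-$YD$ datum on $B$, with no assumption on $d$ — and isolates the hypothesis $d\neq g_Hd$ as being needed exactly where it is indispensable, namely in part (i).
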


\begin{proof}
  Theorem \ref{teo:RsmachHthin} implies that $B_{0}\subseteq \Bbbk 1_{R}\otimes
H$. Then Lemma \ref{lem:qYdOnSmash} implies that there exists $d\in
G\left( H\right) $ such that $g_{B}=1_{R}\# d.$ By (\ref{form:iter1})
with $r = x^{[1]}$ and $h = 1_H$,

\begin{equation*}
\chi _{B}\left( x^{\left[ 1\right] }\# 1_{H}\right) d=\chi _{B}\left(
x^{\left[ 1\right] }\# 1_{H}\right) g_{H}d.
\end{equation*}%
If $\chi _{B}\left( x^{\left[ 1\right] }\# 1_{H}\right) \neq 0$, then $d = g_H d$, contrary
to our assumption and so $\chi _{B}\left( x^{\left[ 1\right] }\# 1_{H}\right) = 0$.

Now, let $2\leq n\leq N-1$ and assume $\chi _{B}\left( x^{\left[ n-1\right]
}\# 1_{H}\right) =0.$ Then%
\begin{eqnarray*}
\chi _{B}\left( x^{\left[ n\right] }\# 1_{H}\right) &=&\chi _{B}\left(
x^{\left[ 1\right] }\cdot _{R}x^{\left[ n-1\right] }\# 1_{H}\right)
 \\
&\overset{(\ref{form:Chi2})}{=}&\omega _{H}^{-1}\left( g_{H}\otimes g_{H}^{n-1}\otimes d\right) \chi
_{B}\left( x^{\left[ n-1\right] }\# 1_{H}\right) \chi _{B}\left( x^{%
\left[ 1\right] }\# 1_{H}\right) =0,
\end{eqnarray*}%
so that

\begin{equation*}
\chi _{B}\left( x^{\left[ n\right] }\# 1_{H}\right) =\delta _{n,0},%
\text{ for }0\leq n\leq N-1.
\end{equation*}%
Now%
\begin{eqnarray*}
\chi _{B}\left( x^{\left[ n\right] }\# h\right)
& \overset{(\ref{form:Chi1})}{=}&
\omega _{H}^{-1}\left( g_{H}^{n}\otimes h_{1}\otimes d\right) \chi
_{B}\left( 1_{R}\# h_{2}\right) \omega _{H}\left( g_{H}^{n}\otimes
d\otimes h_{3}\right) \chi _{B}\left( x^{\left[ n\right] }\#
1_{H}\right)
\\
&=& \delta _{n,0}\chi _{B}\left( 1_{R}\# h\right),
\end{eqnarray*}%
and so
\begin{equation*}
\chi _{B}\left( r\# h\right) =\varepsilon _{R}\left( r\right) \chi
_{B}\left( 1_{R}\# h\right) ,\text{ for every }r\in R,h\in H.
\end{equation*}%
Next we consider equation (\ref{form:iter2}) with $r = x^{[1]}$.

The left hand side is%
\begin{eqnarray*}
&&\chi _{B}\left( \left( x^{\left[ 1\right] }\right) ^{1}\# \left( x^{%
\left[ 1\right] }\right) _{-1}^{2}\right) d\vartriangleright \left( x^{\left[
1\right] }\right) _{0}^{2}
\\
&=&\chi _{B}\left( x^{\left[ 1\right] }\# 1_{H}\right)
d\vartriangleright 1_{R}+\chi _{B}\left( 1_{R}\# g_{H}\right)
d\vartriangleright x^{\left[ 1\right] } \\
&=&\chi _{B}\left( 1_{R}\# g_{H}\right) d\vartriangleright x^{\left[ 1%
\right] }=\chi _{B}\left( 1_{R}\# g_{H}\right) \chi _{H}\left( d\right)
x^{\left[ 1\right] }
\end{eqnarray*}%
and the right hand side is%
\begin{eqnarray*}
&&\omega _{H}^{-1}(\left( x^{\left[ 1\right] }\right) _{-1}^{1}\otimes
\left( x^{\left[ 1\right] }\right) _{-1}^{2}\otimes d)\left( x^{\left[ 1%
\right] }\right) _{0}^{1}\chi _{B}\left( \left( x^{\left[ 1\right] }\right)
_{0}^{2}\# 1_{H}\right)
 \\
 &=& \omega_H^{-1}(1_H \otimes g_H \otimes d) \chi_B(x^{[1]} \# 1_H)
 + \omega_H^{-1} (g_H \otimes 1_H \otimes d) x^{[1]} \chi_B(1_R \# 1_H)
  \\
&=&x^{\left[ 1\right] }+1_{R}\chi _{B}\left( x^{\left[ 1\right] }\#
1_{H}\right) =x^{\left[ 1\right] }.
\end{eqnarray*}%
and we can conclude that
\begin{equation*}
\chi _{B}\left( 1_{R}\# g_{H}\right) \chi _{H}\left( d\right) =1.
\end{equation*}%

Now we apply (\ref{form: YDdatum}) for the quasi-$YD$ datum
  $\left( H,d,\chi _{B}\left( 1_{R}\otimes -\right) \right) $ from Lemma \ref{lem:qYdOnSmash} with $h = g_H$ to obtain
%\begin{equation*}
%\chi _{B}\left( 1_{R}\otimes h_{1}\right) ch_{2}=h_{1}c\chi _{B}\left(
%1_{R}\otimes h_{2}\right) .
%\end{equation*}%
%If we take $h=g_{H},$ we get
\begin{equation*}
\chi _{B}\left( 1_{R}\# g_{H}\right) dg_{H}=g_{H}d\chi _{B}\left(
1_{R}\# g_{H}\right) ,
\end{equation*}%
and since $\chi_B(1_R \# g_H)$ is invertible, we obtain
%Since $\chi _{B}\left( 1_{R}\otimes g_{H}\right) =\chi _{H}\left( d\right)
%^{-1}$ we deduce that%
\begin{equation*}
dg_{H}=g_{H}d.
\end{equation*}
\end{proof}

\section{Examples}

In this section, we present examples illustrating the theory in the previous sections. The problem of course is to find the reassociator explicitly.  Our examples are based on  the  coalgebra $\Bbbk C_n$ where the cocycles are well-known.

\subsection{Group cohomology}\label{sec: group cohomology}

First we set some notation.  Our examples   will involve cyclic groups of order $n$ and $n^2$, $n>1$.
We will denote $C_n = \langle c \rangle$ and  $C_{n^2} = \langle \mathfrak{c} \rangle$.  We will always denote by $q$ a
primitive $n^{2}$-rd root of unity and set $\zeta:=q^n $ . Let $\phi : C_{n^2}\rightarrow C_n$ be the canonical projection with $\phi(\c)=c$ and denote by the same symbol the corresponding map $\Bbbk C_{n^2}\rightarrow \Bbbk C_n$.
For every $a\in\mathbb{Z}$, let $a^{\prime }\in \{0,\ldots, n-1\}$ be congruent to $a$ modulo $n$.

\par Since $\Bbbk $ is an algebraically closed field of characteristic zero, by \cite[Theorem 3.1]{Sweedler-Cohom}, the Sweedler  cohomology can be
computed through an isomorphism%
\begin{equation*}
H_{sw}^{t}\left( \Bbbk C_{n},\Bbbk \right) \cong H^{t}\left( C_{n},\Bbbk
^{\times }\right) ,
\end{equation*}%
where the latter is the group cohomology computed as in \cite[page 167]{Weibel}.

\par For $0\leq i\leq
n-1 $ and $0 \leq a,b,d  $, define  $\omega _{\zeta ^{i}}:(\Bbbk  C_{n})^{ \otimes 3}
   \rightarrow \Bbbk $
by
\begin{equation}\label{eqn: defn of omega}
\omega _{\zeta ^{i}}\left( c^{a}\otimes  c^{b}\otimes  c^{d}\right) =
\omega _{\zeta ^{i}}\left( c^{a}\otimes  c^{d}\otimes  c^{b}\right)
= \zeta^{ia[[b^\prime +d^\prime > n-1]]}.
\end{equation}%
%
%\begin{equation}
%\omega _{\zeta ^{i}}\left( c^{a}\otimes  c^{b}\otimes c^{d}\right)=
%\zeta ^{ia [[b^\prime + d^\prime > n-1]]}   .
%  \label{form:omegazeta}
%\end{equation}%
Since $\zeta = q^n$,
it is easy to check that
\begin{equation}\label{omegaq}
\omega _{\zeta ^{i}}\left( c^{a}\otimes c^{b}\otimes c^{d}\right) = \zeta ^{ia[[b^\prime +d^\prime >n-1  ]]}
 = q^{ina[[b^\prime +d^\prime > n-1  ]]}  \\
=  q^{ia\left( b^\prime +d^\prime -\left( b+d\right) ^{\prime }\right) }.
\end{equation}%

One can  prove that the set of Sweedler $3$%
-cocycles is given by
\begin{equation}\label{form: Sweedler 3 cocycles}
Z_{sw}^{3}\left( \Bbbk C_{n},\Bbbk \right) =\left\{ \left( \omega _{\zeta
^{i}}\right) ^{v} = \omega_{\zeta^i}\ast \partial^2v \mid 0\leq i\leq n-1,v:
\Bbbk  C_{n}^{ \otimes 2}
 \rightarrow \Bbbk \text{ is convolution invertible}%
\right\}.
\end{equation}%
%where $\left( \omega _{\zeta ^{i}}\right) ^{v}$ is computed as in (\ref%
%{form: reassociator}).
This follows from the fact (see e.g. \cite[formulas (E.13) and (E.14)]{MS} over $\mathbb{C}$) that the map
\begin{equation}\label{Sw}
\left\{ k\in \Bbbk ^{\times }\mid k^{n}=1\right\} \rightarrow H_{sw}^{3}\left(
\Bbbk C_{n},\Bbbk \right) :k\mapsto \left[ \omega _{k}\right]
\end{equation}
is a group isomorphism.%

\begin{proposition}
\label{pro:KCn} Let $ \Bbbk   C_{n}
 $   be the group algebra with its standard bialgebra structure and   $\omega$
 a  normalized  $3$-cocycle. Then  $(\Bbbk C_n, \omega)$ is a dual quasi-bialgebra
and there is a gauge transformation $\alpha :(\Bbbk
C_{n})^{\otimes 2}   \rightarrow \Bbbk $,
and $0\leq i\leq n-1$ such that $(\Bbbk C_n, \omega) = (\Bbbk C_n, \omega_{\zeta^i})^\alpha
= (\Bbbk C_n, \omega_{\zeta^i}\ast \partial^2\alpha).$
\end{proposition}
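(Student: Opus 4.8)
The plan is to split the statement into its two assertions and handle them in order. For the first, that $(\Bbbk C_n, \omega)$ is a dual quasi-bialgebra, I would observe that $\Bbbk C_n$ with its standard structure is a cocommutative coalgebra with multiplication and unit; the unitarity axiom (\ref{eq:unitarity}) is automatic for a group algebra, and by the remark following the definition of a dual quasi-bialgebra, cocommutativity forces quasi-associativity (\ref{eq:quasi-associativity}) to hold for \emph{every} reassociator. Since $\omega$ is assumed to be a normalized $3$-cocycle, all the axioms are satisfied and $(\Bbbk C_n, \omega)$ is a dual quasi-bialgebra.

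For the second assertion the starting point is the classification (\ref{form: Sweedler 3 cocycles}) of Sweedler $3$-cocycles: since $\omega$ is a (normalized) $3$-cocycle it lies in $Z_{sw}^{3}(\Bbbk C_n, \Bbbk)$, so there exist $0 \leq i \leq n-1$ and a convolution invertible map $v : (\Bbbk C_n)^{\otimes 2} \to \Bbbk$ with $\omega = \omega_{\zeta^i} \ast \partial^2 v$. The remaining work is to upgrade $v$ to a genuine gauge transformation without disturbing the twisted reassociator.

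The key step is normalization. Because $\Bbbk C_n$ is cocommutative and $\Bbbk$ is commutative, the convolution algebra on $((\Bbbk C_n)^{\otimes 3})^{\ast}$ is commutative, so from $\omega = \omega_{\zeta^i} \ast \partial^2 v$ I obtain $\partial^2 v = \omega_{\zeta^i}^{-1} \ast \omega$. Both $\omega$ and $\omega_{\zeta^i}$ are normalized, and since $1_H$ is grouplike the convolution product of two maps that are normalized in the middle slot is again normalized (evaluate on $h \otimes 1_H \otimes h'$ and use $\Delta(1_H) = 1_H \otimes 1_H$); the same remark applied to the inverse shows $\omega_{\zeta^i}^{-1}$ is normalized. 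Hence $\partial^2 v$ is a normalized cocycle, and Corollary \ref{co: delta2vnormal} applies: setting $a = v(1 \otimes 1)^{-1}$ and $\alpha := a v$ yields a normalized map, i.e. a gauge transformation. By Remark \ref{rem: if H cocomm}(i), rescaling by the scalar $a$ leaves the coboundary unchanged, so $\partial^2 \alpha = \partial^2 v$ (this also follows from a direct check, as the four scalar factors in $\partial^2$ cancel).

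Finally I would assemble the pieces. By Remark \ref{rem: if H cocomm}(iii), $\omega_{\zeta^i}^{\alpha} = \omega_{\zeta^i} \ast \partial^2 \alpha = \omega_{\zeta^i} \ast \partial^2 v = \omega$; moreover the twisted multiplication $m^{\alpha} = \alpha \ast m \ast \alpha^{-1}$ coincides with $m$ on $\Bbbk C_n$, since on a grouplike pair $g \otimes g'$ the factors $\alpha(g \otimes g')\,\alpha^{-1}(g \otimes g') = 1$ collapse. Thus the twisted coalgebra with multiplication and unit is unchanged, and $(\Bbbk C_n, \omega) = (\Bbbk C_n, \omega_{\zeta^i})^{\alpha} = (\Bbbk C_n, \omega_{\zeta^i} \ast \partial^2 \alpha)$, as required. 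The main obstacle is precisely this normalization of $v$: the classification only supplies a convolution invertible cochain, and the real content is verifying that $\partial^2 v$ is normalized so that Corollary \ref{co: delta2vnormal} can be invoked to replace $v$ by a gauge transformation without changing the reassociator.
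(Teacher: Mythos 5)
Your proposal is correct and follows essentially the same route as the paper: invoke the classification (\ref{form: Sweedler 3 cocycles}) to write $\omega = \omega_{\zeta^i} \ast \partial^2 v$, observe that $\partial^2 v$ is normalized because $\omega$ and $\omega_{\zeta^i}$ are, apply Corollary \ref{co: delta2vnormal} to replace $v$ by the gauge transformation $\alpha = v(1\otimes 1)^{-1}v$, and use Remark \ref{rem: if H cocomm}(i) to conclude the twist is unchanged. Your extra verifications (that convolutions of normalized maps are normalized, and that $m^{\alpha}=m$ on grouplikes) are details the paper leaves implicit, but the argument is the same.
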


\begin{proof}The first statement follows from the fact that $\Bbbk C_n$ is cocommutative.
     Since $\omega $ is a  normalized  Sweedler $3$-cocycle, by (\ref{form: Sweedler 3 cocycles}) there
exists a convolution invertible map $v:\Bbbk   C_{n}^{  \otimes 2}
  \rightarrow \Bbbk $  and $i\in \left\{ 0,\ldots
,n-1\right\} $ such that $\omega =\left( \omega _{\zeta ^{i}}\right) ^{v} = \partial^2v \ast \omega_{\zeta^i} $
and $(\Bbbk C_n, \omega) = (\Bbbk C_n, \omega _{\zeta ^{i}}  ^{v}) =  (\Bbbk C_n, \omega _{\zeta ^{i}} ) ^{v}$.  Since $\omega$ and $\omega_{\zeta^i}$ are normalized, so is $\partial^2 v$.
Thus, by Corollary \ref{co: delta2vnormal}, $av$ is a gauge transformation for  $a = v(1 \otimes 1)^{-1}$.
Since $(\Bbbk C_n, \omega_{\zeta^i})^v = (\Bbbk C_n, \omega_{\zeta^i})^{av}$, the statement is proved.
\end{proof}

\begin{remark}In fact, $(\Bbbk C_n, \omega_{\zeta^i})$ is a dual quasi-Hopf algebra, meaning that
  there exists an antipode $S$ and maps $\alpha, \beta$ from $\Bbbk C_n$ to $\Bbbk$ such that for all $h \in \Bbbk C_n$:
  \begin{eqnarray}
&&  S(h_1)\alpha(h_2)h_3 = \alpha(h)1 \text{  and } h_1\beta(h_2)S(h_3) = \beta(h)1;\\
 &&  \omega_{\zeta^i}(h_1\beta(h_2)\otimes S(h_3)\otimes \alpha(h_4)h_5)
   = \omega_{\zeta^i}^{-1}(S(h_1)\otimes \alpha(h_2)h_3 \otimes \beta(h_4)S(h_5)) = \varepsilon(h).
  \end{eqnarray}

  In this case, $S$ is the usual antipode for $\Bbbk C_n$, $\alpha$ and $\beta$
are both equal to the counit $\varepsilon$ and then since $\omega_{\zeta^i}(c^j \otimes c^{-j} \otimes c^j) = 1$,
the statement follows.
\end{remark}

%\textcolor[rgb]{0.00,0.07,1.00}{Dear Claudia and Alessandro:  Maybe we do not need the amount of detail in the prop above??  In Angiono's paper this is
%all taken for granted (I think he refers to the Gelaki and Etingof papers).}
%{\red Dear Margaret, the proof is quite short. For the reader's sake we can choose to keep it as it is. Maybe we can also add a reference to the results you have in mind.} \\

Since by the above discussion the maps $\omega_{\zeta^i}$ are not coboundaries, we have the following:

\begin{corollary}\label{cor: not quasi iso to trivial} The dual quasi-bialgebra $(\Bbbk C_n, \omega_{\zeta^i})$
 is not
quasi-isomorphic
to an ordinary bialgebra, i.e., one with reassociator $\varepsilon_{\Bbbk C_n^{\otimes 3} }$.
\end{corollary}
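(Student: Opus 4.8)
The plan is to argue by contradiction, so suppose that $(\Bbbk C_{n},\omega_{\zeta^{i}})$ \emph{is} quasi-isomorphic to an ordinary bialgebra $(B,\varepsilon_{B})$. By the definition of quasi-isomorphism there is then a gauge transformation $v$ on $B$ and an isomorphism of dual quasi-bialgebras $\Phi:(\Bbbk C_{n},\omega_{\zeta^{i}})\rightarrow (B^{v},\varepsilon_{B}^{v})$. The first and crucial step is to replace this ``external'' twist by a genuine gauge transformation on $\Bbbk C_{n}$ itself. Since $v^{-1}$ is a gauge transformation for $B^{v}$ and, by Remark \ref{rem: if H cocomm}(ii), $(\varepsilon_{B}^{v})^{v^{-1}}=\varepsilon_{B}$, I would apply Proposition \ref{pro:omegav} to the morphism $\Phi$ and the gauge $v^{-1}$. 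This yields that $\gamma:=v^{-1}\circ \Phi^{\otimes 2}$ is a gauge transformation for $\Bbbk C_{n}$ and that $\omega_{\zeta^{i}}^{\gamma}=(\varepsilon_{B}^{v})^{v^{-1}}\circ \Phi^{\otimes 3}=\varepsilon_{B}\circ \Phi^{\otimes 3}=\varepsilon_{(\Bbbk C_{n})^{\otimes 3}}$, the last equality holding because $\Phi$ is a coalgebra map and hence preserves the counit. In short, after twisting by $\gamma$ the reassociator of $\Bbbk C_{n}$ becomes trivial; this is exactly the reduction that is used implicitly in the proof of Corollary \ref{coro:subgauge}.

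With this in hand, the remaining steps are short. Because $\Bbbk C_{n}$ is cocommutative, Remark \ref{rem: if H cocomm}(iii) gives $\omega_{\zeta^{i}}^{\gamma}=\partial^{2}\gamma \ast \omega_{\zeta^{i}}$, so combining with the previous step forces $\partial^{2}\gamma \ast \omega_{\zeta^{i}}=\varepsilon$ and hence $\omega_{\zeta^{i}}=(\partial^{2}\gamma)^{-1}$, a coboundary. But by the group isomorphism (\ref{Sw}) (equivalently, by the description (\ref{form: Sweedler 3 cocycles}) of $Z_{sw}^{3}(\Bbbk C_{n},\Bbbk)$) the cohomology class of $\omega_{\zeta^{i}}$ is the image of $\zeta^{i}$ under that isomorphism, which is nontrivial whenever $\zeta^{i}\neq 1$; thus $\omega_{\zeta^{i}}$ is not a coboundary, and we reach the desired contradiction.

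I expect the main obstacle to be precisely the first step. The definition of ``quasi-isomorphic to an ordinary bialgebra'' refers a priori to an unspecified bialgebra $B$ and to an isomorphism onto its twist $B^{v}$, rather than to a self-twist of $\Bbbk C_{n}$, and one has to convert this into an honest gauge transformation on $\Bbbk C_{n}$ that trivializes $\omega_{\zeta^{i}}$. Proposition \ref{pro:omegav} together with Remark \ref{rem: if H cocomm}(ii) accomplishes exactly this transport with no new computation, after which cocommutativity and the already-established cohomological nonvanishing of $\omega_{\zeta^{i}}$ finish the argument immediately.
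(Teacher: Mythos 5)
Your proof is correct and follows essentially the same route as the paper: the corollary is stated there as an immediate consequence of the fact that $\omega_{\zeta^{i}}$ is not a coboundary (via the isomorphism (\ref{Sw})), and your argument is precisely a careful unpacking of that implicit reduction, using Proposition \ref{pro:omegav} together with Remark \ref{rem: if H cocomm}(ii) to convert the external twist into a gauge transformation $\gamma$ on $\Bbbk C_{n}$, and Remark \ref{rem: if H cocomm}(iii) to conclude that $\omega_{\zeta^{i}}=(\partial^{2}\gamma)^{-1}=\partial^{2}(\gamma^{-1})$ would be a coboundary, a contradiction when $\zeta^{i}\neq 1$.
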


On the other hand $\Bbbk C_{n^2}$ with dual quasi-bialgebra structure via the bialgebra epimorphism $\phi: \Bbbk C_{n^2} \rightarrow \Bbbk C_n$, $\phi(\c) = c$,  is quasi-isomorphic  to an ordinary bialgebra since if
  $\omega$ is a normalized $3$-cocycle for $\Bbbk C_n$, then $\omega   \phi^{\otimes 3}$ is   a coboundary.
   %In fact, the induced map from $H^3(\Bbbk C_n, \Bbbk)$ to $H^3(\Bbbk C_{n^2}, \Bbbk)$ has trivial
%image. Clearly coboundaries map to coboundaries since for {\red $\nu: (\Bbbk C_n)^{\otimes 2} \rightarrow \Bbbk$},
% $(\partial^2\nu ) \phi^{\otimes 3} = \partial^2(\nu   \phi^{\otimes 2})$.
In fact,  one can see by direct computation that
 $\omega_{\zeta^i} \phi^{\otimes 3} = \partial^2 v_i$ where
$v_i: (\Bbbk C_{n^2})^{\otimes 2} \rightarrow \Bbbk$ is defined by $v_i(\c^a \otimes \c^b) = q^{ia(b-b^\prime)}$.

\subsection{Quasi-$YD$ data for $\Bbbk C_n$}

To find quasi-$YD$ data for $(\Bbbk C_n, \omega_{\zeta^w})$, we apply the results of Section \ref{sec: qyd data}.
For $0 \leq z \leq n-1$, from (\ref{form:datum-1 short}) we will be able to show
    that if  $((\Bbbk C_n, \omega_{\zeta^w}), g:= c^z, \chi)$ is a quasi-$YD$ datum then
\begin{equation} \label{form:datum-0 special}
\chi(c^t) = \chi(c)^t  \text{ for }  0 \leq t \leq n-1 \text{ and }  \chi(c)^n =\zeta^{wz},
\end{equation}
and
thus, unless $\chi(c)^n = 1$, i.e., $\zeta^{wz} =1$, then $\chi$ is not a character.

  We show  \eqref{form:datum-0 special} as follows. From \eqref{form:datum-1 short} and the definition of $\omega_{\zeta^w}$, we deduce that $\chi(c^t) = \chi(c)^t$  for $1 \leq t \leq n-1$. By unitarity of $\chi$, this equality also holds for $t=0$. By unitarity of $\chi$ and the fact that $c^n=1$, we get $1 = \chi(1) = \chi(c^n)$. On the other hand a direct computation of $\chi(c^{n})$ using \eqref{form:datum-1 short} and the definition of $\omega_{\zeta^w}$ yields $\chi(c^{n})=\chi(c)^n \zeta^{-wz}$ and so \eqref{form:datum-0 special} is proved.

Take $t\in\mathbb{N}$.
Then, since $c^n=1$, we have $\chi(c^{t})=\chi(c^{t^\prime})\overset{\eqref{form:datum-0 special}}{=}\chi(c)^{t' }$.   Thus \eqref{form:datum-0 special} is equivalent to
\begin{equation}\label{form:datum-1 special}
\chi(c^t) = \chi(c)^{ t^\prime}\text{ for } t\in\mathbb{N}   \text{ and } \chi(c)^n  =  \zeta^{wz} .
\end{equation}
\end{remark}

%
%
%\textcolor[rgb]{0.00,0.07,1.00}{Suggest deleting between  \textcolor[rgb]{0.50,0.00,0.51}{**********} }
%\textcolor[rgb]{0.50,0.00,0.51}{**********}
%For the rest of this section, let $\omega$ be a normalized $3$-cocycle on $\Bbbk C_n=\Bbbk\langle c\rangle$ and
%consider the dual quasi-bialgebra $(\Bbbk C_n, \omega )$.
% By Proposition \ref{pro:KCn}, we
%know that, up to gauge deformation, we can assume that $\omega =\omega
%_{\zeta ^{w}}$ for $\zeta$ our primitive $n$th root of unity   and $0 \leq w \leq n-1$.
%\textcolor[rgb]{0.50,0.00,0.51}{**********}

\begin{proposition}
\label{pro:speriamo} Consider the dual quasi-bialgebra $(\Bbbk C_n, \omega_{\zeta^w})$.
 Let $c^z \in C_n$, $0 \leq z \leq n-1$ and $\chi \in \Bbbk C_n^\ast$.
  If $((\Bbbk C_n, {  \omega_{\zeta^w}}), c^z, \chi)$ is a quasi-$YD$ datum,
   then (\ref{form:datum-1 special}), or equivalently (\ref{form:datum-0 special}),
   %(\ref{form:datum-1})
    holds.
Conversely  if $\chi$ is a unitary map satisfying
{  \eqref{form:datum-1 special}},  or equivalently (\ref{form:datum-0 special}),
  for some $0 \leq z \leq n-1$ then $((\Bbbk C_n, \omega_{\zeta^w}),c^z,\chi)$ is a
quasi-$YD$ datum.

\end{proposition}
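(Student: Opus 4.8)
The forward implication is essentially already contained in the discussion preceding the statement, so in the proof I would dispose of it by reference. Concretely, since the cocycle $\omega_{\zeta^w}$ of \eqref{eqn: defn of omega} is symmetric in its last two arguments, Remark \ref{rem: datum short} applies and the reduced formula \eqref{form:datum-1 short} holds for the grouplike $c$. Reading off the explicit values of $\omega_{\zeta^w}$ gives $\chi(c^t)=\chi(c)^t$ for $1\le t\le n-1$, while evaluating the same formula at $t=n$ and using $\chi(1)=1$ together with $c^n=1$ forces $\chi(c)^n=\zeta^{wz}$; this is precisely \eqref{form:datum-0 special}, equivalently \eqref{form:datum-1 special}.

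For the converse I plan to appeal to Lemma \ref{lem:group}. Since $C_n$ is abelian, $c^z\in Z(C_n)$ is automatic, and $\chi$ is unitary by hypothesis, so it suffices to verify \eqref{form:chiProd} for $h=c^a$, $\ell=c^b$. Because $c^a,c^b,c^z$ are grouplike (so $\Delta(c^a)=c^a\otimes c^a$, etc.) and $\Bbbk C_n$ is cocommutative, every Sweedler component collapses and \eqref{form:chiProd} becomes the single scalar equation
\begin{equation*}
\chi(c^{a+b})=\omega_{\zeta^w}^{-1}(c^a\otimes c^b\otimes c^z)\,\chi(c^b)\,\omega_{\zeta^w}(c^a\otimes c^z\otimes c^b)\,\chi(c^a)\,\omega_{\zeta^w}^{-1}(c^z\otimes c^a\otimes c^b).
\end{equation*}
The first simplification is that the two factors carrying $c^a$ in the first slot cancel: the symmetry $\omega_{\zeta^w}(x\otimes y\otimes z)=\omega_{\zeta^w}(x\otimes z\otimes y)$ built into \eqref{eqn: defn of omega} gives $\omega_{\zeta^w}(c^a\otimes c^z\otimes c^b)=\omega_{\zeta^w}(c^a\otimes c^b\otimes c^z)$, which is the inverse of the first factor. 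The right-hand side therefore collapses to $\chi(c^a)\chi(c^b)\,\omega_{\zeta^w}^{-1}(c^z\otimes c^a\otimes c^b)$.

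It then remains to substitute \eqref{form:datum-1 special}. Setting $\kappa:=\chi(c)$, so that $\chi(c^t)=\kappa^{t^{\prime}}$ and $\kappa^n=\zeta^{wz}$, and using \eqref{omegaq} to write
\begin{equation*}
\omega_{\zeta^w}^{-1}(c^z\otimes c^a\otimes c^b)=q^{-wz(a^{\prime}+b^{\prime}-(a+b)^{\prime})}=(\zeta^{wz})^{-[[a^{\prime}+b^{\prime}>n-1]]}=\kappa^{-n[[a^{\prime}+b^{\prime}>n-1]]},
\end{equation*}
the right-hand side becomes $\kappa^{a^{\prime}+b^{\prime}-n[[a^{\prime}+b^{\prime}>n-1]]}$, whose exponent is exactly $(a+b)^{\prime}$ (it equals $a^{\prime}+b^{\prime}$ when $a^{\prime}+b^{\prime}\le n-1$ and $a^{\prime}+b^{\prime}-n$ otherwise). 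Hence the right-hand side equals $\kappa^{(a+b)^{\prime}}=\chi(c^{a+b})$, the left-hand side, so \eqref{form:chiProd} holds and Lemma \ref{lem:group} yields the datum. The one genuinely delicate point is precisely this last bookkeeping step: the discrepancy between $\kappa^{a^{\prime}+b^{\prime}}$ and $\kappa^{(a+b)^{\prime}}$ must be absorbed \emph{exactly} by the surviving cocycle factor, and this is the place where the normalization $\chi(c)^n=\zeta^{wz}$ of \eqref{form:datum-1 special} is both forced and used.
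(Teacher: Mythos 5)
Your proposal is correct and follows essentially the same route as the paper: the forward direction by reference to the discussion deriving \eqref{form:datum-0 special} from \eqref{form:datum-1 short}, and the converse by reducing \eqref{form:chiProd} on group elements, cancelling two reassociator factors via the symmetry $\omega_{\zeta^w}=\omega_{\zeta^w}(\Bbbk C_n\otimes\tau)$, and matching the surviving factor $\omega_{\zeta^w}^{-1}(c^z\otimes c^a\otimes c^b)$ against the defect between $\chi(c)^{a'+b'}$ and $\chi(c)^{(a+b)'}$ using $\chi(c)^n=\zeta^{wz}$. Your invocation of Lemma \ref{lem:group} in place of the paper's direct observation that \eqref{form: YDdatum} holds by commutativity and cocommutativity is an immaterial repackaging of the same argument.
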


\begin{proof}
The first assertion follows immediately from
 Lemma \ref{lem:ChiCyclic} and Remark \ref{rem: datum short}.

\par Since $c^z \in G\left( \Bbbk C_n\right) $, since $\chi \in \Bbbk C_n^{\ast }$ is unitary
by assumption, and since (\ref{form: YDdatum}) holds because $\Bbbk C_n$ is both commutative and cocommutative,
it remains only to check (\ref{form:chiProd}) .
We   check (\ref{form:chiProd}) on generators. The equality
holds trivially for $h=1_{\Bbbk C_n}$ or $k=1_{\Bbbk C_n}.$ Hence we can assume that $%
h=c^{a}$ and $k=c^{b}$ for $1 \leq a,b \leq n-1.$    Then the left side of (\ref{form:chiProd}) is %
 \begin{equation*}
\chi \left( c^{a}c^{b}\right) = \chi \left(
c^{a+b}\right)
\overset{\eqref{form:datum-1 special}}{=}
\chi \left( c\right) ^{\left( a+b\right) ^{\prime }}
= \chi \left( c\right) ^{a+b-[[a+b \geq n]]n}
\overset{(\ref{form:datum-1 special})}{=}\chi \left( c\right) ^{a+b}\zeta ^{-[[a+b \geq n]]wz}.
\end{equation*}

% \smallbreak

Since $\omega_{\zeta^w} = \omega_{\zeta^w}(\Bbbk C_n \otimes \tau)$ and $\Bbbk C_n$ is cocommutative, the right hand side of (\ref{form:chiProd}) is:
  \begin{equation*}
 % \omega _{\zeta ^{w}}^{-1}\left( c^{a}\otimes c^{b}\otimes c^{z}\right)
%\omega _{\zeta ^{w}}\left( c^{a}\otimes c^{z}\otimes c^{b}\right) \omega
%_{\zeta ^{w}}^{-1}\left( c^{z}\otimes c^{a}\otimes c^{b}\right) \chi \left(
%c^{a}\right) \chi \left( c^{b}\right)
 %\overset{(\ref{eqn: defn of omega})}{=}
 \omega _{\zeta ^{w}}^{-1}\left( c^{z}\otimes c^{a}\otimes c^{b}\right)
\chi \left( c^{a}\right) \chi \left( c^{b}\right) \\
 \overset{(\ref{form:datum-1 special})}{=}
\chi \left( c\right) ^{a+b}\omega _{\zeta ^{w}}^{-1}\left( c^{z}\otimes
c^{a}\otimes c^{b}\right)\overset{(\ref{eqn: defn of omega})}{=}\chi \left( c\right) ^{a+b}\zeta ^{-[[a+b \geq n]]wz}.
\end{equation*}
Thus (\ref{form:chiProd}) holds and the proof is complete.
 \end{proof}

\begin{example}\label{ex: basic example qYD datum} Consider the dual quasi-bialgebra $(\Bbbk C_n, \omega_{\zeta^i})$ with $i>0$.
 Let $c^z \in C_n$ with $1 \leq z \leq n-1$, and then for $\chi \in (\Bbbk C_n)^\ast$ to satisfy \eqref{form:datum-0 special},
 we must have that $\chi(c)^n = \zeta^{iz}
 = q^{niz}$.  Thus if we define $\chi_{}(c^t) = q^{izt}$ for $0 \leq t \leq n-1$, then $((\Bbbk C_n, \omega_{\zeta^i}), c^z, \chi_{})$
 is a quasi $YD$-datum for $\chi(c^z) = q^{iz^2} $.  Note that $q^{iz^2}$ is a primitive $r$th root of unity
 where $r = \frac{n^2}{(n^2,iz^2)}$.

 \par
 More generally, for $0 \leq j \leq n-1$, let $\chi_j: \Bbbk C_n \rightarrow \Bbbk$ be defined by
 $\chi_j(c^t) = \zeta^{jt}q^{izt}$ if $0 \leq t \leq n-1$. Since $\chi_j(c)^n = (\zeta^{j})^n q^{izn} = \zeta^{iz}$, so that
  \eqref{form:datum-0 special} holds,
 $((\Bbbk C_n, \omega_{\zeta^i}), c^z, \chi_{j})$ is   a quasi-$YD$ datum for $\chi_j(c^z) = \zeta^{jz}q^{iz^2}$
 and the order of $\zeta^{jz}q^{iz^2}$ is $\frac{n^2}{(njz + iz^2, n^2)}$.

\end{example}

\begin{example} \label{ex: p(p-1) sq boson}  Let $n=p$, a prime.  For the quasi-$YD$ datum
$((\Bbbk C_p, \omega_{\zeta^i}), c^z, \chi_j)$ in Example \ref{ex: basic example qYD datum}
with $\chi_j(c^t) = \zeta^{jz}q^{izt}$,
 there are $p-1$ choices
for $i$ and  also for $z$ that give  quasi-$YD$ data for a primitive $p^2$rd root of unity
and since
  $j= 0, \ldots, p-1$, there are $p$ choices for $j$.
Thus one may
form $p(p-1)^2$ bosonizations $R \# \Bbbk C_p$ where $R$ has dimension $p^2$.  Below we discuss which of these can
be isomorphic or quasi-isomorphic.

\par   Suppose that $H:= (\Bbbk C_p, \omega_{\zeta^i})$ and $L:= (\Bbbk C_p , \omega_{\zeta^{i^\prime}})$.  Then by
the discussion in Section \ref{sec: group cohomology}, $H $ and $L$ are quasi-isomorphic if and only
if $i = i^\prime$.
If $R \#H$  is quasi-isomorphic   to $S \#L$ for some $R,S$
 as in Theorem \ref{teo:RsmachHthin}, then,
  by Remark \ref{rem:quasi}, $H$
 is quasi-isomorphic  to
$L$ and thus $i = i^\prime$.
 Thus if two bosonizations as constructed above are quasi-isomorphic, then $i = i^\prime$, i.e., $H=L$.

\par  Now fix $H:= (\Bbbk C_p, \omega_{\zeta^i})$, and consider the quasi-$YD$ data $\mathcal{D} :=(H, c^z, \chi_j)$ and
$\mathcal{E}:=(H,c^w, \chi_k)$.    Let $R$ (respectively $S$) be the Hopf algebra in $^H_H\mathcal{YD}$
constructed from $\mathcal{D}$ ($\mathcal{E}$ respectively) with basis $x^{[n]}$ (respectively $y^{[n]}$).
Set $x=x^{[1]},y=y^{[1]}$.
 Suppose that there
is a dual quasi-bialgebra isomorphism $\Phi: R\#H \rightarrow S \#H$.
By the formula  for $\Delta_R$ in Theorem \ref{teo:RsmachHthin}, the comultiplication formula from Theorem \ref{teo:RsmashH} and the fact that the coefficients
$\beta(i,n)$ are nonzero, we have that  $P_{1\#1,1\#c^{i}}\left( R\#H\right) =\Bbbk 1\#\left( 1-c^{i}\right) +\delta
_{i,z}\Bbbk \left( x \#1\right)$ (respectively $P_{1\#1,1\#c^{i}}\left( S\#H\right) =\Bbbk 1\#\left( 1-c^{i}\right) +\delta
_{i,w}\Bbbk \left( y \#1\right)$).

  Since $\Phi$ is a morphism of dual quasi-bialgebras, by Remark \ref{rem:quasi}, we get that $\Phi(1 \#c^z) =1\#\Phi^\prime(c^z)$.
Write $\Phi^\prime(c^z)=c^a$ with $0\leq a\leq p-1$.
Since $x \#1\in P_{1\#1,1\#c^{z}}\left( R\#H\right)$, we get that
$\Phi(x \#1)\in P_{1\#1,1\#c^a}\left( S\#H\right)=\Bbbk 1\#\left( 1-c^a\right) +\delta
_{a,w}\Bbbk \left( y \#1\right)$. If $a\neq w$, then $\Phi(x \#1)\in \Bbbk 1\#\left( 1-c^a\right)$
and hence $x \#1\in \Bbbk\Phi^{-1}(1\#\left( 1-c^a\right))\subseteq \Bbbk \#H$, a contradiction.  Thus $a=w$ and hence   $\Phi(1 \#c^z) = 1 \#c^w$,  and
$\Phi(x \#1 ) = \alpha y\#1 + \beta 1\#(1-c^w)$.
Since $\Phi^{-1}( 1 \# H) = 1 \#H$ then $\alpha \neq 0$.

 Then we have
\begin{eqnarray*}
\Phi[(1 \#c^z)(x \#1)] &=& \Phi[\chi_j(c^z)x \#c^z]\\
&=& \Phi[\chi_j(c^z)(x \# 1)(1 \# c^z)] \\
&=& \chi_j(c^z) [\alpha y\#1 + \beta 1\#(1-c^w)][1 \#c^w]\\
&=& \chi_j(c^z) [\alpha y\#c^w + \beta 1\#(1-c^w)c^w].
\end{eqnarray*}

However,
\begin{eqnarray*}
\Phi(1 \#c^z)\Phi(x \#1) &=& (1 \# c^w) (\alpha y\#1 + \beta 1\#(1-c^w))\\
&=& \chi_k(c^w)\alpha y \#c^w +   \beta 1 \# (1 - c^w)c^w.
\end{eqnarray*}
Thus $\beta = 0$ and $\chi_j(c^z) = \chi_k(c^w)$, i.e., $\zeta^{jz}q^{iz^2} = \zeta^{kw}q^{iw^2}$.
Thus $p^2$ divides $p(jz-kw) + i(z^2 -w^2)$ so that $p$ divides $i(z-w)(z+w)$.  Then either
$z=w$ or $z+w =p$.

\par Suppose that $z=w$.  Then $p$ divides $z(j-k)$.  This is impossible unless $j=k$ and then the two
quasi-$YD$ data are the same.

\par Suppose that $z+w =p$.  Then $p$ divides $ jz -kw +i(z-w) = jz -k(p-z) + i(2z-p)$ so that
$p$ divides $z(j +k +2i)$, i.e., $p |(j + k + 2i)$.

\par In any case, there are at least $p(p-1)$ nonisomorphic bosonizations.  Fix $z=1$.  Then there are $p$ choices
for $j$ and $p-1$ choices for $i$ giving nonisomorphic bosonizations.
\end{example}

In the next example, for a change, we consider the group algebra of a nonabelian group and find a quasi-$YD$ datum.

\begin{example}\label{ex: quasi YD for dicyclic group} Let $G:= Dic_p$, the dicyclic group of
order $4p$ for $p$ an odd prime.  Then
$Dic_p = C_p \rtimes C_4 = \langle x,y | x^4 = 1 = y^p, xyx^{-1} = y^{-1} \rangle $ and $Z(G) = \{ 1, x^2  \}$.
 Since $C_p$ is a normal
subgroup of $G$ then there is a bialgebra projection $\pi$ from $\Bbbk G $ to $\Bbbk C_4= \Bbbk \langle c \rangle$
 by $\pi(y^ix^j) = c^{j}$. Let $ \omega :=\omega_\zeta$ be the cocycle defined in
 Subsection \ref{sec: group cohomology} for $\Bbbk C_4$ with
  $q$ a primitive $16$th root of unity and $\zeta = q^4$. Let $\omega_G: \Bbbk G^{\otimes 3} \rightarrow \Bbbk$
   be defined by $\omega_G:= \omega \pi^{\otimes 3}$ and thus $(\Bbbk G, \omega_G)$ is a dual quasi-bialgebra and $\pi$ is a
   dual quasi-bialgebra morphism.    By Corollary \ref{coro:subgauge}, since $(\Bbbk C_4, \omega_\zeta)$ is nontrivial and since
   there is an inclusion $\sigma: \Bbbk C_4 \hookrightarrow \Bbbk G$ such that $ \pi \sigma$ is the identity, then
   $(\Bbbk G, \omega_G)$ is also nontrivial.

 \par By Example \ref{ex: basic example qYD datum} with $n=4$, $((\Bbbk C_4, \omega_\zeta), c^2, \chi)$ with $\chi(c^t)
 = q^{2t}$ is a quasi-$YD$ datum for $\chi(c^2) = q^4 = \zeta$, a primitive $4$th root of unity.  By Lemma \ref{lem:qYDquotient},
 since $\pi(x^2) = c^2$ and $x^2 \in Z(G)$, then $((G, \omega_G), x^2, \chi_G:= \chi \pi)$ is a quasi-$YD$ datum for $(\Bbbk G, \omega_G)$.

\end{example}

Note that for a nonabelian group with trivial centre, the construction in the example above can only yield a trivial $YD$
datum for $q=1$.  On the other hand, the same construction is possible for any nonabelian group $G$ with a projection onto a cyclic group such
that the kernel does not contain the centre of $G$.

 \par The next example shows that \eqref{form:datum-0 special} need not hold for a quasi-$YD$ datum for $\Bbbk C_N$ if $\omega \neq \omega_{\zeta^w}$,
 in particular it can happen that $\chi(c^t) \neq \chi(c)^t$ for some $0 < t < N$.

\begin{example}\label{ex: phi} Let $\phi: \Bbbk C_{n^2} = \Bbbk \langle \c\rangle \rightarrow \Bbbk C_n = \Bbbk \langle c \rangle$ be the
surjection of bialgebras from Section \ref{sec: group cohomology} given by $\phi(\c) = c$.  Then $\phi$ induces
a morphism of dual quasi-bialgebras from
 $(\Bbbk C_{n^2}, \omega_\zeta  \phi^{\otimes 3} = \partial^2v)$
to   $(\Bbbk C_n, \omega_\zeta)$  where,
 by Section \ref{sec: group cohomology}, $v(\c^a \otimes \c^b) = q^{a(b - b^\prime)}$.

By Example \ref{ex: basic example qYD datum},  $((\Bbbk C_n, \omega_\zeta), c, \chi)$ is a quasi $YD$-datum,
 with $\chi(c^t) = q^t$ for $0 \leq t \leq n-1$,  and so
by Lemma \ref{lem:qYDquotient}, $((\Bbbk C_{n^2}, \partial^2v), \c, \chi \phi)$ is a quasi-$YD$ datum also.
However, taking $t=n<n^2-1$ and checking \eqref{form:datum-0 special}, we find that
 $\chi \phi (\c^n) = \chi(c^n) = \chi(1) = 1$ while $(\chi\phi (\c))^n =
\chi(c)^n = q^n = \zeta$. Thus in this case \eqref{form:datum-0 special} is not satisfied.
 Note that
 $  \omega_\zeta  \phi^{\otimes 3} =    \omega_\zeta  \phi^{\otimes 3}(\Bbbk C_{n^2} \otimes \tau)$ so that
(\ref{form:datum-1 short}) still holds.
\end{example}

%\textcolor[rgb]{0.00,0.07,1.00}{Dear Claudia and Alessandro;  We should probably be looking at Gelaki's paper \cite{Gelaki} Theorem 3.12.} {\red [Dear Margaret, we do not understand what we should look there and to what end.]}
%\textcolor[rgb]{0.00,0.07,1.00}{Sorry, this was too fuzzy
%a remark.  I am nervous about duplication in our examples and those of the papers of Gelaki, Etingof and Angiono. I have
%the feeling we should say something about the fact that the examples are duals of each other but I also do not feel that
%I know the literature well enough to say much.  Also their examples are Hopf and we have not shown that ours are. } {\red [Dear Margaret, we have the same problem.]}
\vspace{2mm}
\begin{example}
Let $\left( \left( H,\omega \right) ,g,\chi \right) $ be a
quasi-$YD$ datum for some primitive $N$-th root of unity $q$, $N>0$.  Let
$L=\Bbbk \left\langle g\right\rangle $ and let $\varphi :(L, \omega_L) \hookrightarrow (H, \omega_H)$ be the
canonical inclusion where $\omega _{L}=\omega _{\mid L^{\otimes 3} }$.
Note that $\varphi :\left( \left( L,\omega _{L}\right) ,g,\chi _{\mid
L}\right) \rightarrow \left( \left( H,\omega \right) ,g,\chi \right) $ is a
morphism of quasi-$YD$ data. By Proposition \ref{pro:morphdatum},
we have a dual quasi-bialgebra homomorphism $f\otimes \varphi
:R_{L}\#L\rightarrow R_{H}\#H$ where $R_{L}:=R\left( \left( L,\omega
_{L}\right) ,g,\chi _{\mid L}\right) $, $R_{H}:=R\left( \left( H,\omega
\right) ,g,\chi \right) $ and $f:R_{L}\rightarrow R_{H}$ is a $\Bbbk $%
-linear isomorphism. Note that, since $\varphi $ is injective, so is $%
f\otimes \varphi $ so that $R_{L}\#L$ identifies with a dual
quasi-subbialgebra of $R_{H}\#H.$
\end{example}

\par We point out that the following example is dual to one given by Gelaki in
\cite[subsection 3.1]{Gelaki}.  There a quasi-Hopf algebra is given which
 is quasi-isomorphic
to an ordinary Hopf algebra but contains a sub-quasi-Hopf algebra which is not.

\begin{example}
\label{ex:Cn}
Recall the setting of Example \ref{ex: phi} where we have a morphism of quasi-$YD$ data
from $((\Bbbk C_{n^2},\omega_{n^2}:=\partial^2v), \c,\chi   \phi )$  to $((\Bbbk C_n, \omega_\zeta),c, \chi   )  $
with $\chi(c^t) = q^t$, $0 \leq t \leq n-1$,
induced
by the bialgebra surjection $\phi: \Bbbk C_{n^2} \rightarrow \Bbbk C_n$ with $\phi(\c) = c$. Note that both are
quasi-$YD$-data for $q$ where $q$ is a primitive $n^2$-rd root of unity.

\par The isomorphism $f: R_{n^{2}} \rightarrow R_{n}$   from  Proposition \ref{pro:morphdatum} yields
  a dual quasi-bialgebra
surjection $f\otimes \phi :R_{n^{2}}\#\Bbbk   C_{n^{2}}
\rightarrow R_{n}\#\Bbbk   C_{n}  $ where $R_{n^{2}}:=R\left(
\left( \Bbbk   C_{n^{2}}  ,\omega _{n^{2} }\right) ,\c,\chi \phi
\right) $, $R_{n}:=R\left( \left( \Bbbk   C_{n}  ,\omega
_{\zeta}\right) ,c,\chi \right) $. Set
\begin{equation*}
A:=R_{n^{2}}\#\Bbbk  C_{n^{2}} \qquad \text{and}\qquad
B:=R_{n}\#\Bbbk  C_{n},
\end{equation*}%
so that $B$ is
  a quotient of the dual quasi-bialgebra
$A.$
 Since $(\Bbbk C_{n^2}, \partial^2v)$ can be twisted by $v^{-1}$ to $(\Bbbk C_{n^2}, \varepsilon_{(\Bbbk C_{n^2})^{\otimes 3}})$,
 then by Remark \ref{rem: with a projection}, $A$ is also quasi-isomorphic to an ordinary bialgebra. In fact, it is easy
 to check that $A$ should be deformed by the gauge transformation $\mu:= v^{-1} (\pi \otimes \pi)$ to obtain an
 ordinary bialgebra. On the other hand,
 since $\omega_\zeta$ is not trivial in $H^3(\Bbbk C_n, \Bbbk)$, $(\Bbbk C_n, \omega_\zeta)$ cannot be quasi-isomorphic
 to an ordinary bialgebra and thus by  Remark \ref{rem: with a projection}, neither can $B$.

\par We can say more about $A^\mu$.  Since $A^\mu$ is finite dimensional with coradical $\Bbbk C_{n^2}$, which is a Hopf algebra,
then $A^\mu$ is also a Hopf algebra  \cite[Remark 36]{Takeuchi-Free}.  Now, let
  $ \left( x^{\left[ n\right] }\right) _{0\leq n\leq N-1}$ be the canonical
basis for $R_{n^{2}}.$ Then  $X:= x^{\left[ 1\right] }\# 1_{\Bbbk   C_{n^{2}}  }$ is  a nontrivial skew-primitive element
    since
\begin{eqnarray*}
\Delta _{A}(x^{\left[ 1\right] }\# 1_{\Bbbk   C_{n^{2}}  })
&=&\left( x^{\left[ 1\right] }\right) ^{1}\# \left( x^{\left[ 1\right]
}\right) _{-1}^{2}\otimes \left( x^{\left[ 1\right] }\right) _{0}^{2}\#
1_{\Bbbk   C_{n^{2}}  }
\\
&=&\left( 1_{R_{n^{2}}}\# \c\right) \otimes \left( x^{\left[ 1\right]
}\# 1_{\Bbbk  C_{n^{2}}  }\right) +\left( x^{\left[ 1\right]
}\# 1_{\Bbbk   C_{n^{2}}  }\right) \otimes \left(
1_{R_{n^{2}}}\# 1_{\Bbbk   C_{n^{2}}  }\right) \\
&=&\left( 1_{R_{n^{2}}}\# \c \right) \otimes \left( x^{\left[ 1\right]
}\# 1_{\Bbbk   C_{n^{2}}  }\right) +\left( x^{\left[ 1\right]
}\# 1_{\Bbbk   C_{n^{2}}  }\right) \otimes 1_{A}
\end{eqnarray*}%
so that, if we set
  $\Gamma :=1_{R_{n^{2}}}\# \c$,  we get
\begin{equation*}
\Delta _{A}(X)=X\otimes 1_{A}+\Gamma \otimes X.
\end{equation*}%
Since $A$ and $A^\mu$ have the same   coalgebra structure,
  $X$ is a $\left( 1_{A},\Gamma \right) $-primitive element also in $
A^{\mu  }$. Consider the sub-Hopf algebra of $A^{\mu  }$
generated by $X$ and $\Gamma .$  This is a Taft algebra of dimension $o\left(
\Gamma \right) ^{2}=n^{4}.$ Hence $A^{\mu  }=T_q$.
\end{example}

\subsection{Quasi-$YD$ data for $R \# \Bbbk C_n$}

Now we apply Proposition \ref{pro:qydd} to a quasi-$YD$ datum used in our examples.
\begin{proposition} \label{prop:B} Let
 $H:=((\Bbbk C_n, \omega_{\zeta }), c , \chi)$ with $\chi(c^t ) = q^t$ for $0 \leq t \leq n-1$, be
the quasi-$YD$ datum from Example \ref{ex: basic example qYD datum}, let $R:= R((\Bbbk C_n, \omega_{\zeta }), c ,\chi )$,
and let $B:= R \# H$.    Suppose that there is a quasi-$YD$ datum for $B$, $((B,\omega_B), g_B, \chi_B)$ as in
Proposition \ref{pro:qydd}.   Then $g_B = 1_R \# c^w$ for some $0 \leq w \leq n-1$, and for $0 \leq t \leq n-1$
and $r \in R$,
\begin{equation}
   \chi _{B}\left( r\# c^{t}\right) = \varepsilon _{R}\left(
r\right) q^{-w t}\prod\limits_{0\leq i\leq t-1}    \omega_{\zeta }^{-1}\left(
c^{w}\otimes c^{i}\otimes c\right) = \varepsilon_R(r)q^{-wt}.  \label{form:chiA2short}
  \end{equation}
 In particular $g_{B}$ and $\chi _{B}$ are uniquely determined by $w$ and $%
\left( \left( \Bbbk   C_{n}  ,\omega_{\zeta }\right) ,c,\chi \right).
  $
  \end{proposition}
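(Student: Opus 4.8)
The plan is to verify the hypotheses of Proposition \ref{pro:qydd}, read off its conclusions, and then compute $\chi_B$ explicitly on a basis of $B$. First I would apply Theorem \ref{teo:RsmachHthin} to obtain $B_0 \subseteq \Bbbk 1_R \otimes H$, which is precisely the standing hypothesis required by both Lemma \ref{lem:qYdOnSmash} and Proposition \ref{pro:qydd}. The latter then yields a grouplike $d \in G(\Bbbk C_n)$, necessarily $d = c^w$ with $0 \leq w \leq n-1$, such that $g_B = 1_R \# c^w$. The only hypothesis of Proposition \ref{pro:qydd} that requires checking is $d \neq g_H d$; here $g_H = c$ and $d = c^w$, so this reads $c^w \neq c^{w+1}$, which holds since $c$ has order $n > 1$. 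Hence conclusions (i) and (ii) of Proposition \ref{pro:qydd} are at our disposal.

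Next I would pin down $\chi_B$ on the grouplikes $1_R \# c^t$. Conclusion (ii) reads $\chi_B(1_R \# c)\,\chi(c^w) = 1$, and since $\chi(c^w) = q^w$ this gives $\chi_B(1_R \# c) = q^{-w}$. To reach general $t$, I would invoke Lemma \ref{lem:qYdOnSmash}, by which $\bigl((\Bbbk C_n, \omega_\zeta), c^w, \chi_B(1_R \# -)\bigr)$ is again a quasi-$YD$ datum. Since $\omega_\zeta = \omega_\zeta(\Bbbk C_n \otimes \tau)$, Remark \ref{rem: datum short} applies, and \eqref{form:datum-1 short} gives
\[
\chi_B(1_R \# c^t) = \chi_B(1_R \# c)^{t} \prod_{0 \leq i \leq t-1} \omega_\zeta^{-1}(c^w \otimes c^i \otimes c) = q^{-wt} \prod_{0 \leq i \leq t-1} \omega_\zeta^{-1}(c^w \otimes c^i \otimes c).
\]

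The computational heart of the argument is that this product is trivial for $0 \leq t \leq n-1$. By the explicit description \eqref{eqn: defn of omega}, $\omega_\zeta(c^w \otimes c^i \otimes c) = \zeta^{w[[i^{\prime} + 1 > n-1]]}$, and for each $i$ with $0 \leq i \leq t-1 \leq n-2$ we have $i^{\prime} = i \leq n-2$, so the Iverson bracket vanishes and every factor equals $1$. Thus $\chi_B(1_R \# c^t) = q^{-wt}$, which is the middle expression of \eqref{form:chiA2short} at $r = 1_R$ (the product there reducing to $1$ by the same bracket computation). Combining this with conclusion (i), $\chi_B(r \# h) = \varepsilon_R(r)\chi_B(1_R \# h)$, establishes \eqref{form:chiA2short} for all $r \in R$ and all $0 \leq t \leq n-1$.

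For the uniqueness clause I would observe that $\{x^{[m]} \# c^t : 0 \leq m \leq n^2-1,\ 0 \leq t \leq n-1\}$ is a basis of $B$, and \eqref{form:chiA2short} determines $\chi_B$ on each such vector solely in terms of $w$ and $q$; as $g_B = 1_R \# c^w$ is likewise fixed by $w$, both $g_B$ and $\chi_B$ depend only on $w$ and the given datum $((\Bbbk C_n, \omega_\zeta), c, \chi)$. I do not anticipate any real obstacle beyond the bracket computation; the one point that must not be overlooked is that the hypothesis $d \neq g_H d$ of Proposition \ref{pro:qydd} is automatically in force precisely because $n > 1$.
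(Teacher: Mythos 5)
Your proof is correct and follows essentially the same route as the paper's: Proposition \ref{pro:qydd} (after checking $c^w \neq c\,c^w$, which holds since $n>1$) gives $g_B = 1_R \# c^w$ and $\chi_B(1_R \# c) = q^{-w}$, then Lemma \ref{lem:qYdOnSmash} together with \eqref{form:datum-1 short} yields $\chi_B(1_R \# c^t)$, and conclusion (i) of Proposition \ref{pro:qydd} extends this to all of $B$. The only difference is that you explicitly carry out the Iverson-bracket computation showing $\prod_{0\leq i\leq t-1}\omega_{\zeta}^{-1}(c^w \otimes c^i \otimes c) = 1$ for $0 \leq t \leq n-1$, a detail the paper leaves implicit in asserting the final equality of \eqref{form:chiA2short}.
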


\begin{proof}
By Proposition \ref{pro:qydd} there exists $d = c^w
  $ such that $g_{B}=1_{R}\# d.$ Since $c^w \neq c  c^w$,
    Proposition \ref{pro:qydd}(ii) may be applied  to get that,
\begin{equation*}\chi_B(1_R \# c) = \chi(c^w)^{-1}   =
q^{-w}.
\end{equation*}
Since by Lemma \ref{lem:qYdOnSmash} $((\Bbbk C_n, \omega_{\zeta }),  c^w, \chi_B(1_R \# -))$ is a quasi-$YD$
datum, then  $\chi_B(1_R \# -)$ must satisfy \eqref{form:datum-1 short}, i.e.,
\begin{equation*}
\chi_B(1_R \# c^t) = \chi_B(1_R \# c)^t \prod\limits_{0\leq i\leq t-1}
 \omega_{\zeta}^{-1}(c^w \otimes c^i \otimes c)
= q^{-wt}\prod\limits_{0\leq i\leq t-1}  \omega_{\zeta}^{-1}(c^w \otimes c^i \otimes c).
\end{equation*}

The statement now follows from (i) of Proposition \ref{pro:qydd}.
\end{proof}

\begin{corollary}\label{cor: w=m}

Let $H,R,B$ be as in the proposition with $n=2m$.  If $((B,\omega_B), g_B= 1 \# c^w, \chi_B  )$ is a quasi-$YD$ datum
for $B$ with $c^w \neq 1$, then $w=m$.
\end{corollary}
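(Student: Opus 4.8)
The plan is to pin down the scalar $\chi_B(1_R \# c)$ in two independent ways and then read off the divisibility condition that $n = 2m$ imposes on $w$.

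First I would recall from Proposition \ref{prop:B} that $g_B = 1_R \# c^w$ for some $0 \leq w \leq n-1$; the hypothesis $c^w \neq 1$ means in fact $1 \leq w \leq n-1$. As in the proof of that proposition, since $c^w \neq c \cdot c^w$, part (ii) of Proposition \ref{pro:qydd} applies and yields $\chi_B(1_R \# c)\,\chi(c^w) = 1$. Because the datum for $H$ has $\chi(c^w) = q^w$, this gives the first expression $\chi_B(1_R \# c) = q^{-w}$.

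Next I would invoke Lemma \ref{lem:qYdOnSmash}, which guarantees that $((\Bbbk C_n, \omega_\zeta), c^w, \chi_B(1_R \# -))$ is again a quasi-$YD$ datum. Feeding this datum into Proposition \ref{pro:speriamo}, with grouplike $c^w$ and cocycle $\omega_\zeta = \omega_{\zeta^1}$ (so the grouplike exponent is $w$ while the cocycle exponent is $1$), equation \eqref{form:datum-0 special} forces the second relation $\chi_B(1_R \# c)^n = \zeta^{w}$.

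Comparing the two then finishes the argument: raising $\chi_B(1_R \# c) = q^{-w}$ to the $n$-th power gives $\chi_B(1_R \# c)^n = q^{-wn} = (q^n)^{-w} = \zeta^{-w}$, so that $\zeta^{-w} = \zeta^{w}$, i.e.\ $\zeta^{2w} = 1$. Since $\zeta = q^n$ is a primitive $n$-th root of unity, this says precisely $n \mid 2w$, which for $n = 2m$ reads $m \mid w$. As $1 \leq w \leq n-1 = 2m-1$, the only multiple of $m$ available is $w = m$, as claimed. I expect no serious obstacle here: the substance lies entirely in correctly matching the roles of the exponents when applying Proposition \ref{pro:speriamo} — the grouplike exponent is $w$ and the cocycle exponent is $1$ — after which the conclusion is a one-line root-of-unity computation.
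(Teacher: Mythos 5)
Your proposal is correct and follows essentially the same route as the paper: both deduce $\chi_B(1_R\# c)=q^{-w}$ (you via Proposition \ref{pro:qydd}(ii), the paper by citing the already-derived formula \eqref{form:chiA2short}), both use Lemma \ref{lem:qYdOnSmash} together with \eqref{form:datum-0 special} to get $\chi_B(1_R\# c)^n=\zeta^{w}$, and both conclude from $\zeta^{2w}=1$ that $w=m$. Your matching of exponents when applying \eqref{form:datum-0 special} (grouplike exponent $w$, cocycle exponent $1$) and the final divisibility step are exactly what the paper does, just spelled out in slightly more detail.
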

\begin{proof}
By Lemma \ref{lem:qYdOnSmash}, if $((B,\omega_B), g_B= 1 \# c^w, \chi_B  )$ is a quasi-$YD$ datum, then
 $((\Bbbk C_{2m}, \omega_\zeta), c^w, \chi_B \sigma)$ is also a quasi-$YD$ datum where $\sigma$ is the inclusion map.
 Then by (\ref{form:datum-0 special}), $\chi_B \sigma(c)^n = \zeta^w$. By (\ref{form:chiA2short}),
 \begin{displaymath}
 \chi_B(1 \#c)^n = q^{-wn} = \zeta^{-w},
 \end{displaymath}
 so that $\zeta^{2w}=1$ and we must have that $w=m$.
\end{proof}

We are now able to construct a quasi-$YD$ datum on a dual quasi-bialgebra which is a bosonization of
a group algebra.  We begin with a useful lemma.

\begin{lemma}
\label{lem:go}Let $n,a\in
%TCIMACRO{\U{2115} }%
%BeginExpansion
\mathbb{N}
%EndExpansion
$ with  $0\leq a\leq n^{2}-1.$ Then
\begin{equation*}
\left\vert \left\{ i\mid 0\leq i\leq a-1,i^{\prime }=n-1\right\} \right\vert
=\frac{a-a^{\prime }}{n}
\end{equation*}
\end{lemma}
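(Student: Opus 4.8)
The plan is to identify explicitly the integers being counted and then reduce the problem to counting a set of multiples of $n$. First I would observe that the condition $i^{\prime}=n-1$ means precisely that $i\equiv n-1 \pmod n$, so the nonnegative integers $i$ with $i^{\prime}=n-1$ are exactly those of the form $i=jn-1$ with $j\geq 1$. Consequently the set whose cardinality we must compute is
\begin{equation*}
\{\, jn-1 \mid j\geq 1,\ jn-1\leq a-1 \,\}.
\end{equation*}

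Next I would rewrite the defining inequality $jn-1\leq a-1$ as $jn\leq a$, i.e. $j\leq a/n$; since $j$ ranges over positive integers this is equivalent to $1\leq j\leq \lfloor a/n\rfloor$. Hence the cardinality in question is exactly $\lfloor a/n\rfloor$. Finally I would invoke the division algorithm in the form that defines $a^{\prime}$: writing $a=\lfloor a/n\rfloor\, n+a^{\prime}$ with $0\leq a^{\prime}\leq n-1$, we obtain $\lfloor a/n\rfloor=(a-a^{\prime})/n$, which is the claimed value.

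I expect no genuine obstacle in this argument, since it is a direct counting computation; the only points that warrant a word of care are the degenerate cases, and these are absorbed by the same formula. When $a=0$ the set is empty and $a^{\prime}=0$, so $(a-a^{\prime})/n=0$ as required; when $n=1$ one has $i^{\prime}=0=n-1$ for every $i$, so every $i$ with $0\leq i\leq a-1$ is counted, giving $a$, while $(a-a^{\prime})/n=(a-0)/1=a$. Thus the single expression $\lfloor a/n\rfloor=(a-a^{\prime})/n$ handles all cases uniformly, and the hypothesis $a\leq n^{2}-1$ plays no role here (it is needed only for the later applications of the lemma).
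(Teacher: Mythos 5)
Your proof is correct and takes essentially the same route as the paper's: both count the integers of the form $jn-1$ lying below $a$ via the division algorithm $a = sn + a'$ (the paper merely phrases this by partitioning the nonnegative integers into blocks $I_t=\{(t-1)n,\ldots,tn-1\}$ of length $n$, each containing exactly one such integer). Your observation that the hypothesis $a\leq n^2-1$ is not needed is also accurate.
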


 \begin{proof}
 Note that the left hand side of the equation above is the number of nonnegative
integers congruent to $n-1 \mod n$ and strictly less than $a$. For $t \geq 1$, define an interval $I_t$ of $n$ integers by
$I_t = \{ j \in \mathbb{N} | (t-1)n \leq j \leq tn-1  \}$. Then the left hand side is
the number of intervals $I_t$ whose entries are less than $a$. If $a = a^\prime + sn$, then $a \in I_{s+1}$ and
  this number is clearly $s$.
\end{proof}

In the next example, we find a quasi-$YD$ datum for $B:= R \# \Bbbk C_n$ where $n$ is even. As always, $q$ denotes a primitive $n^2$-rd root of unity
and $\zeta :=q^n$.

\begin{example}\label{ex: quasi YD for B} Let $n = 2m$  and let $(B:= R \# \Bbbk C_n, \omega_B = \omega_\zeta \pi)$ be the dual quasi-bialgebra of dimension $n^3$ constructed
via the quasi-$YD$ datum $((\Bbbk C_n, \omega_\zeta), c, \chi)$ for $q$ with $\chi(c^t) = q^t$, $0 \leq t \leq n-1$, as
in Example \ref{ex: basic example qYD datum}.  We will construct a quasi-$YD$ datum for $B$ for $\iota:= q^{-m^2}$,
a primitive $4$-th root of unity.

\par First note that $((\Bbbk C_n, \omega_\zeta), c^m, \widetilde{\chi})$ with $\widetilde{\chi}(c^t) = q^{-mt}$ for $0 \leq t \leq n-1$
is a quasi-$YD$ datum   by Proposition \ref{pro:speriamo} since $\widetilde{\chi}(c)^n
= q^{-mn} = \zeta^{-m} = \zeta^m$ since $\zeta$ has order $2m$.  Also $((\Bbbk C_n, \omega_\zeta), c^m, \widetilde{\chi})$ is a quasi-$YD$ datum for $\iota $ since
$\widetilde{\chi}(c^m) = q^{-m^2}$.

\par We will now show that $((B, \omega_B:= \omega_\zeta \pi^{ \otimes 3}), g_B:=\sigma(c^m), \chi_B:= \widetilde{\chi} \pi)$, where $\pi, \sigma$ are the
usual projection and inclusion maps from Remark \ref{rem: with a projection},  is a quasi-$YD$ datum.  Since $\pi$ is a surjection
of dual quasi-bialgebras from $(B, \omega_B)$ to $(\Bbbk C_n, \omega_\zeta)$ with $\pi \sigma (c^m) = c^m$, it remains to show that
for all $b \in B$,
\begin{displaymath}
\sigma(c^m) \widetilde{\chi} \pi(b_1)b_2 = b_1 \widetilde{\chi} \pi(b_2) \sigma(c^m)
\end{displaymath}
in order to apply Lemma \ref{lem:qYDquotient} and conclude that
$((B, \omega_B:= \omega_\zeta \pi), g_B:=\sigma(c^m), \chi_B:= \widetilde{\chi} \pi)$
is a quasi-$YD$ datum for $\iota$.

\par Let $b = \ x^{[a]} \# c^\ell$ for $0 \leq a \leq n^2 -1$ and $0 \leq \ell \leq n-1$.
Since by
   Theorem \ref{teo:RsmachHthin},
$
    \Delta_R(x^{[a]} )
=   \sum_{0 \leq i \leq a}\beta(i,a)  x^{[i]}
 \otimes  x^{[a-i]} $  and
 since   $\beta(0,a) = \beta(a,a)=1$, by  applying $\varepsilon_H$ on the left and on the right of  (\ref{form:piIdpi}),
 \begin{equation}
      \pi(b_1) \otimes b_2 =  c^{a}c^\ell \otimes x^{[a]} \# c^\ell = c^a c^\ell \otimes b
 \text{ and }
 b_1
   \otimes \pi(b_2) =   x^{[a]} \# c^{\ell} \otimes c^\ell = b \otimes c^\ell .
 \end{equation}

By the formula for multiplication in $B = R \# \Bbbk C_n$ in Theorem \ref{teo:RsmashH}, we have that
\begin{equation*}
(1 \# c^m)(x^{[a]} \# c^\ell) = \omega_\zeta(c^m \otimes c^{a} \otimes c^\ell) \omega_\zeta^{-1}(c^{a} \otimes c^m \otimes c^\ell)
c^m \rhd x^{[a]} \# c^m c^\ell,
\end{equation*}
and
\begin{equation*}
(x^{[a]} \# c^\ell)(1 \# c^m) =
 \omega_\zeta^{-1}(c^a \otimes c^\ell \otimes c^m) x^{[a]} \# c^\ell c^m
=
 \omega_\zeta^{-1}(c^a \otimes c^m \otimes c^\ell) x^{[a]} \# c^\ell c^m.
\end{equation*}

Since $c^m \rhd x^{[a]} = \chi_{[a]}(c^m)x^{[a]}$ where $\chi_{[a]}$ is defined
 in  Proposition \ref{prop: structure of T(V)}, it remains to show that
\begin{equation*}
\omega_\zeta(c^m \otimes c^{a} \otimes c^\ell)
\chi_{[a]}(c^m) \widetilde{\chi}(c^{a}c^{\ell})
= \widetilde{\chi}(c^\ell).
\end{equation*}

By (\ref{form:chiProd}) for the quasi-$YD$ datum $\left( (\Bbbk C_n, \omega_\zeta),c^{m},\widetilde{\chi }\right) $,
\begin{displaymath}
\widetilde{\chi }\left( c^{a}c^{\ell}\right)
 =  \omega_\zeta^{-1}(c^{m}\otimes c^{a}\otimes c^{\ell}) \widetilde{\chi }(c^a) \widetilde{\chi}(c^\ell),
\end{displaymath}

and
thus it suffices to prove that
$
\widetilde{\chi }\left( c^{a}\right) \chi _{\left[ a\right] }\left(
c^{m}\right) =1$. Since  $ \widetilde{\chi }\left( c^{a}\right) =
\widetilde{\chi }\left( c^{a^\prime}\right) = q^{-ma'}$, this is equivalent to  showing that $$
\chi _{\left[ a\right] }\left( c^{m}\right) =  q^{ma^{\prime }}.
$$
Since $c^m$ is a cocommutative element, by equation (\ref{form:chincocomm})
\begin{eqnarray*}
\chi _{\left[ a\right] }\left( c^{m}\right) &=&
 \chi(c^m)^a \prod\limits_{0\leq i\leq a-1} \omega_\zeta(c^m \otimes c \otimes c^i)
 = q^{ma}\prod\limits_{0\leq i\leq a-1} \omega_\zeta(c^m \otimes c \otimes c^{i^\prime})\\
 &=&   q^{ma}\prod\limits_{0\leq i\leq a-1}\zeta ^{m [[1+i^\prime \geq n]] } =
 q^{ma}\prod\limits_{0\leq i\leq a-1}\zeta ^{m\delta _{i^{\prime },n-1}}.
\end{eqnarray*}

Thus we have to prove that%
\begin{equation*}
q^{ma}\prod\limits_{0\leq i\leq a-1}\zeta ^{m\delta _{i^{\prime
},n-1}}=q^{ma^{\prime }}.
\end{equation*}%
But   $q^{ms} = q^{-ms}$ for every $s\in n\mathbb{Z}$ since,  writing $s=n\widehat{s}$, $q^{2mn\widehat{s}} = q^{n^2\widehat{s}} = 1$.
Thus it suffices to prove that
%{\red writing $s=n\hat{s}$}, $q^{2mn\hat{s}} = q^{n^2 \hat{s}} = 1$,
%this is equivalent to proving that,
\begin{equation*}
\prod\limits_{0\leq i\leq a-1}\zeta ^{m\delta _{i^{\prime
},n-1}}=q^{-m\left( a-a^{\prime }\right)} = q^{m\left( a-a^{\prime }\right)  } .
\end{equation*}
By Lemma \ref{lem:go}, we have
\begin{equation*}
\left\vert \left\{ i\mid 0\leq i\leq a-1,i^{\prime }=n-1\right\} \right\vert
=\frac{a-a^{\prime }}{n},
\end{equation*}%
so that
   \begin{equation*}
\prod\limits_{0\leq i\leq a-1}\zeta ^{m\delta _{i^{\prime
},n-1}}
%=\prod\limits_{0\leq i\leq a-1}\zeta ^{-m\delta _{i^{\prime
%},n-1}}
=\prod\limits_{0\leq i\leq a-1}q^{nm\delta _{i^{\prime
},n-1}}
=q^{mn\sum_{0\leq i\leq a-1}\delta _{i^{\prime },n-1}}=q^{mn\frac{%
a-a^{\prime }}{n}}
=q^{m\left( a-a^{\prime }\right) }.
\end{equation*}
This shows that $((B, \omega_B), g_B:= \sigma(c^m), \chi_B:= \widetilde{\chi} \pi)$ is a quasi-$YD$ datum for $q^{-m^2}$ and thus
one can form the bosonization $S \# B$ of dimension $4n^3= 32 m^3$ where $S$ has basis $y^{[i]}$ for $0 \leq i \leq 3$.
\qed

\end{example}


\begin{thebibliography}{}
%\bibitem[AD]{Andrusk-Dasca-CoForbenius} N. Andruskiewitsch, \c{S}. D\u{a}sc%
%\u{a}lescu, \emph{Co-Frobenius Hopf algebras and the coradical filtration.}
%Math. Z. \textbf{243} (2003), no. 1, 145--{}154.

%\bibitem[AEG]{AEG- Triangular} N. Andruskiewitsch, P. Etingof, S. Gelaki,
%\emph{Triangular Hopf algebras with the Chevalley property}. Michigan Math.
%J. \textbf{49} (2001), no. 2, 277--298.

%\bibitem[AG1]{AG} N. Andruskiewitsch, M. Gra\~{n}a, \emph{Braided Hopf
%algebras over non-abelian finite groups}. Colloquium on Operator Algebras
%and Quantum Groups (Spanish) (Vaquer\'{\i}as, 1997). Bol. Acad. Nac. Cienc.
%(C\'{o}rdoba) \textbf{63} (1999), 45--78.

%\bibitem[AMS1]{AMS-MM} A. Ardizzoni, C. Menini and D. \c{S}tefan, \emph{PBW
%deformations of braided symmetric algebras and a Milnor-Moore type theorem
%for braided bialgebras}, preprint. (arXiv:math.QA/0604181v2)

%\bibitem[AMS2]{AMS-MM2} A. Ardizzoni, C. Menini and D. \c{S}tefan, \emph{%
%Braided Bialgebras of Hecke-type}, J. Algebra, Vol. \textbf{321} (2009),
%847-865.

\bibitem[An]{angiono}  I. E. Angiono, \emph{Basic quasi-Hopf algebras over cyclic groups}.
Adv. Math. \textbf{225} (2010), 3545--3575.

%\bibitem[Ar0]{Ardizzoni-Phd} A. Ardizzoni, Cohomological Properties of
%Algebras and Coalgebras in Monoidal Categories with Applications, Ph. D.
%Thesis, Universit\`{a} degli Studi di Ferrara (Italy), March 15, (2006).

%\bibitem[Ar1]{Ardizzoni-Sdeg} A. Ardizzoni, \emph{On the Combinatorial Rank
%of a Graded Braided Bialgebra,} J. Pure Appl. Algebra, \textbf{215} (2011)
%2043-2054.
%
%\bibitem[Ar2]{Ardizzoni-Universal} A. Ardizzoni, \emph{On Primitively
%Generated Braided Bialgebras, Algebr. Represent. Theory, to appear.}
%
%\bibitem[Ar3]{Ardizzoni-MMPrim} A. Ardizzoni, \emph{A Milnor-Moore Type
%Theorem for Primitively Generated Braided Bialgebras}, J. Algebra, Vol.
%\textbf{327}(1) (2011), 337-365.

%\bibitem[Ar4]{Ardizzoni-SFFS} A. Ardizzoni, \emph{Separable Functors and
%formal smoothness}, J. K-theory, Vol. \textbf{1} (2008), 535-582.

%\bibitem[AP1]{Ardi-Pava} A. Ardizzoni, A. Pavarin, \emph{Preantipodes for
%Dual Quasi-Bialgebras, }Israel J. Math.,  Vol. \textbf{192}(1), (2012).

\bibitem[AP]{Ardi-Pava2} A. Ardizzoni, A. Pavarin, \emph{Bosonization
for Dual Quasi-Bialgebras and Preantipode},  J. Algebra, Vol. \textbf{390} (2013), 126-–159.

%\bibitem[AS]{AS- Lifting} N. Andruskiewitsch, H.-J. Schneider, \emph{%
%Lifting of quantum linear spaces and pointed Hopf algebras of order} $p^{3}$%
%. J. Algebra \textbf{209} (1998), no. 2, 658--691.

%\bibitem[AS2]{AS- FiniteQuantCartan} N. Andruskiewitsch, H.-J. Schneider,
%\emph{Finite quantum groups and Cartan matrices.} Adv. Math. \textbf{154}
%(2000), no. 1, 1--45.

%\bibitem[AS3]{AS} N. Andruskiewitsch, H.-J. Schneider, \emph{Pointed Hopf
%algebras}. New directions in Hopf algebras, 1--68, Math. Sci. Res. Inst.
%Publ., \textbf{43}, Cambridge Univ. Press, Cambridge, 2002.

%\bibitem[Ba]{Ba} J.C. Baez, \emph{Hochschild homology in a braided tensor
%category}, Trans. Amer. Math. Soc. \textbf{334} (1994), 885-906.
%
%\bibitem[Bo]{Borceux1} F. Borceux, \emph{Handbook of categorical algebra. 1.
%Basic category theory.} Encyclopedia of Mathematics and its Applications,
%\textbf{50}. Cambridge University Press, Cambridge, 1994.
%
%\bibitem[BG]{Braverman-Gaitsgory} S. Braverman, D. Gaitsgory, \emph{Poincar%
%\'{e}-Birkhoff-Witt theorem for quadratic algebras of Koszul type}. J.
%Algebra \textbf{181} (1996), no. 2, 315--328.
%
%\bibitem[BC]{bulacu} D. Bulacu, S.Caenepeel, \emph{Integrals for (dual)
%quasi-Hopf algebras. Applications. } J. Algebra \textbf{266} (2003), no. 2,
%552--583.

%\bibitem[BDG]{bdgconstructing} M. Beattie, S. D\u{a}sc\u{a}lescu, L. Gr\"{u}nenfelder, \emph{Constructing pointed Hopf algebras by Ore
%extensions}. J. Algebra \textbf{225} (2000), no. 2, 743--770.

\bibitem[CDMM]{CDMM-QuantumLines} C. C\u{a}linescu, S. D\u{a}sc\u{a}lescu,
A. Masuoka, C. Menini, \emph{Quantum lines over non-cocommutative
cosemisimple Hopf algebras.} J. Algebra \textbf{273} (2004), no. 2, 753--779.

%\bibitem[DNR]{DNR-HopfAlgebras} S. D\u{a}sc\u{a}lescu, C. N\u{a}st\u{a}%
%sescu, \c{S}. Raianu, \emph{Hopf algebras. An introduction}. Monographs and
%Textbooks in Pure and Applied Mathematics, \textbf{235}. Marcel Dekker,
%Inc., New York, 2001.

%\bibitem[EGNO]{EGNO-TensorCateg} P. Etingof, S. Gelaki, D. Nikshych, and V.
%Ostrik, Tensor Categories, preprint (2009). (http://www-math.mit.edu/\symbol{%
%126}{}etingof/tenscat.pdf).


\bibitem[EGe1]{EGe codim 2} P. Etingof, S. Gelaki, \emph{Finite-dimensional quasi-Hopf
algebras with radical of codimension $2$}, Math. Res. Lett. 11 (2004) 685-696.

\bibitem[EGe2]{EGe rad gr} \bysame, \emph{On radically graded finite-dimensional quasi-Hopf algebras},
Mosc. Math. J. \textbf{5} (2005), no. 2, 371-378.

\bibitem[EGe3] {EGe liftings} \bysame, \emph{Liftings of graded quasi-Hopf algebras with radical of prime
codimension}, J. Pure Appl. Algebra \textbf{205} (2006), no. 2, 310-322.

\bibitem[Ge]{Gelaki} S. Gelaki, \emph{Basic quasi-Hopf algebras of dimension%
} $n^{3}$. J. Pure Appl. Algebra \textbf{198} (2005), no. 1-3, 165--174.

%\bibitem[Gre]{Green} J. A. Green, \emph{Locally finite representations}, J.
%Algebra \textbf{41} (1976), no. 1, 137--171.

%\bibitem[Hu]{Humphreys} J. E. Humphreys, \emph{Introduction to Lie algebras
%and representation theory}. Graduate Texts in Mathematics, Vol. \textbf{9}.
%Springer-Verlag, New York-Berlin, 1972.
%
%\bibitem[Hu]{Hungerford} T. W. Hungerford, \emph{Algebra}. Holt, Rinehart
%and Winston, Inc., New York-Montreal, Que.-London, 1974.
%
%\bibitem[Ja1]{Jac-BasiAlg2} N. Jacobson, \emph{Basic algebra. II.} W. H.
%Freeman and Co., San Francisco, Calif., 1980.
%
%\bibitem[Ja2]{Jacobson-LieAlg} N. Jacobson, \emph{Lie algebras.}
%Interscience Tracts in Pure and Applied Mathematics, No. 10 Interscience
%Publishers, New York-London 1962.

\bibitem[Ka]{Kassel-Quantum} C. Kassel, \emph{Quantum groups}, Graduate
Text in Mathematics \textbf{155}, Springer, 1995.

%\bibitem[Kh1]{Kherchenko-Aquantum} V. K. Kharchenko, \emph{A quantum
%analogue of the Poincar\'{e}-Birkhoff-Witt theorem.} Algebra and Logic
%\textbf{38} (1999), no. 4, 259--276.
%
%\bibitem[Kh2]{Kharchenko- connected} V. K. Kharchenko, \emph{Connected
%Braided Hopf algebras,} J. Alg. \textbf{307} (2007), 24--48.
%
%\bibitem[Kh3]{Kharchenko-SkewPrim} V. K. Kharchenko, \emph{Skew primitive
%elements in Hopf algebras and related identities}. J. Algebra \textbf{238}
%(2001), no. 2, 534--559.

\bibitem[Mac]{Mac Lane} S. Mac Lane, \emph{Categories for the working
mathematician.} Second edition. Graduate Texts in Mathematics, \textbf{5}.
Springer-Verlag, New York, 1998.

%\bibitem[MM]{Milnor-Moore} J. W. Milnor and J. C. Moore, \emph{On the
%structure of Hopf algebras}. Ann. of Math. (2) \textbf{81} (1965) 211--264.

%\bibitem[Mo]{Montgomery} S. Montgomery, \emph{Hopf Algebras and their
%actions on rings,} CMBS Regional Conference Series in Mathematics \textbf{82}
%(1993).

\bibitem[MS]{MS} G. Moore, N. Seiberg, \emph{Classical and quantum conformal
field theory}. Comm. Math. Phys. \textbf{123} (1989), no. 2, 177--254.


\bibitem[Sc]{Schauenburg-Quotients} P. Schauenburg, \emph{Quotients of
finite quasi-Hopf algebras.} Hopf algebras in noncommutative geometry and
physics, 281--290, Lecture Notes in Pure and Appl. Math., \textbf{239},
Dekker, New York, 2005.


\bibitem[Sw1]{Sweedler-Cohom} M. E. Sweedler, \emph{Cohomology of algebras
over Hopf algebras}. Trans. Amer. Math. Soc. \textbf{133} 1968 205--239.

\bibitem[Sw2]{Sw} M. Sweedler, \emph{Hopf Algebras}, Benjamin, New York,
1969.

\bibitem[Ta]{Takeuchi-Free} M. Takeuchi, \emph{Free Hopf algebras generated by coalgebras}.
J. Math. Soc. Japan \textbf{23} (1971), 561–582.

%\bibitem[Ta2]{Takeuchi-Morita} M. Takeuchi,\emph{Morita theorems for
%categories of comodules.} J. Fac. Sci. Univ. Tokyo Sect. IA Math. \textbf{24}
%(1977), no. 3, 629--{}644.

%\bibitem[Ta]{Ta} M. Takeuchi, \emph{Survey of braided Hopf algebras},
%Contemp. Math. \textbf{267} (2000), 301--324.

%\bibitem[Uf]{Ufer} S. Ufer, \emph{PBW bases for a class of braided Hopf
%algebras}. J. Algebra \textbf{280} (2004), no. 1, 84--119.

\bibitem[We]{Weibel} C. Weibel, \emph{An introduction to homological algebra}%
. Cambridge Studies in Advanced Mathematics, \textbf{38}. Cambridge
University Press, Cambridge, 1994.
\end{thebibliography}
\end{document}